\journal{\ldots}
\definecolor{myblue}{rgb}{0,0,0.6}
\newtheorem{theorem}{Theorem}[section]
\newtheorem{corollary}[theorem]{Corollary}
\newtheorem{definition}[theorem]{Definition}
\newenvironment{proof}[1][Proof]{\noindent \emph{#1.} }
{\hfill \ \rule{0.5em}{0.5em}}
\newtheorem{lemma}[theorem]{Lemma}
\newtheorem{proposition}[theorem]{Proposition}
\newtheorem{assumption}[theorem]{Assumption}
\newtheorem{remark}[theorem]{Remark}
\numberwithin{equation}{section}
\newcommand{\noi}{\noindent}
\newcommand{\R}{\mathbb{R}}
\newcommand{\cL}{{\cal L}}
\newcommand{\cK}{{\cal K}}
\newcommand{\cV}{{\cal V}}
\newcommand{\cM}{{\cal M}}
\newcommand{\cO}{{\cal O}}
\newcommand{\bx}{\mathbf{x}}
\newcommand{\br}{\mathbf{r}}
\newcommand{\bze}{\mathbf{0}}
\newcommand{\bn}{\mathbf{n}}
\newcommand{\bv}{\mathbf{v}}
\newcommand{\bw}{\mathbf{w}}
\newcommand{\bW}{\mathbf{w}}
\newcommand{\bg}{\mathbf{g}}
\newcommand{\bu}{\mathbf{u}}
\newcommand{\ba}{\mathbf{a}}
\newcommand{\C}{\mathbb{C}}
\newcommand{\la}{\lambda}
\newcommand{\eps}{\varepsilon}
\newcommand{\re}{{\rm e}}
\newcommand{\ri}{{\rm i}}
\newcommand{\rd}{{\rm d}}
\newcommand{\Rea}{\mathbb{R}}
\newcommand{\Com}{\mathbb{C}}
\newcommand{\half}{\frac{1}{2}}
\newcommand{\pdiff}[2]{\frac{\partial #1}{\partial #2}}
\newcommand{\po}{\Gamma}
\newcommand{\bLv}{\overline{\cL v}}
\newcommand{\bgv}{\overline{\gv}}
\newcommand{\dive}{\nabla \cdot}
\newcommand{\length}{{L}}
\newcommand{\dvdn}{\pdiff{v}{n}}
\newcommand{\dnu}{\partial_n u}
\newcommand{\dnv}{\partial_n v}
\newcommand{\nT}{\nabla_\Gamma}
\newcommand{\gu}{\nabla u}
\newcommand{\gv}{\nabla v}
\newcommand{\gvb}{\overline{\nabla v}}
\newcommand{\vb}{\overline{v}}
\newcommand{\GammaN}{\Gamma}
\newcommand{\LtG}{{L^2(\Gamma)}}
\newcommand{\HokO}{{H^1_k(\Omega)}}
\newcommand{\LtO}{{L^2(\Omega)}}
\newcommand{\HoO}{H^1(\Omega)}
\newcommand{\tendi}{\rightarrow \infty}
\newcommand{\tendo}{\rightarrow 0}
\newcommand*{\N}[1]{\left\|#1\right\|}
\newcommand{\tfa}{\text{ for all }}
\newcommand{\tas}{\text{ as }}
\newcommand{\tand}{\text{ and }}
\newcommand{\cont}{{\rm cont}}
\newcommand{\coer}{{\rm coer}}
\newcommand{\Ccont}{C_{\cont}}
\newcommand{\Ccoer}{C_{\coer}}
\newcommand{\Ccontj}{C_{\cont}^{(j)}}
\newcommand{\Ccoerj}{C_{\coer}^{(j)}}
\newcommand{\bV}{\mathbf{v}}
\newcommand{\matrixstyle}[1]{{\mathsf{#1}}}
\newcommand{\matrixS}{\matrixstyle{S}}
\newcommand{\matrixL}{\matrixstyle{ L}}
\newcommand{\matrixM}{\matrixstyle{ M}}
\newcommand{\matrixB}{\matrixstyle{ B}}
\newcommand{\matrixC}{\matrixstyle{ C}}
\newcommand{\matrixD}{\matrixstyle{ D}}
\newcommand{\matrixI}{\matrixstyle{ I}}
\newcommand{\matrixNo}{\matrixstyle{ N}^{(0)}}
\newcommand{\matrixNone}{\matrixstyle{ N}^{(1)}}
\newcommand{\matrixNtwo}{\matrixstyle{ N}^{(2)}}
\newcommand{\matrixA}{\matrixstyle A}
\newcommand{\matrixP}{{\matrixstyle P}}
\newcommand{\bfx}{\mathbf{x}}
\newcommand{\bfd}{\mathbf{d}}
\newcommand{\bfr}{\mathbf{r}}
\begin{document}
\begin{frontmatter}
\title{Can coercive formulations lead to fast and accurate solution of the Helmholtz equation?}
\author[UCL]{Ganesh C.\ Diwan}
\ead{g.diwan@ucl.ac.uk}

\author[Pv]{Andrea Moiola\corref{cor1}}
\ead{andrea.moiola@unipv.it}
\cortext[cor1]{Corresponding author}

\author[Bath]{Euan A.\ Spence}
\ead{E.A.Spence@bath.ac.uk}

\address[UCL]{Department of Medical Physics \& Biomedical Engineering, University College London, London, WC1E 6BT, UK}
\address[Pv]{Department of Mathematics, University of Pavia, 27100 Pavia, Italy}
\address[Bath]{Department of Mathematical Sciences, University of Bath, Bath, BA2 7AY, UK}

\begin{abstract}
A new, coercive formulation of the Helmholtz equation was introduced in \cite{MoSp:14}. In this paper we investigate $h$-version Galerkin discretisations of this formulation, and the iterative solution of the resulting linear systems.
We find that the coercive formulation behaves similarly to the standard formulation in terms of the pollution effect (i.e.~to maintain accuracy as $k\to\infty$, $h$ must decrease with $k$ at the same rate as for the standard formulation).
We prove $k$-explicit bounds on the number of GMRES iterations required to solve the linear system of the new formulation when it is preconditioned with a prescribed symmetric positive-definite matrix. 
Even though the number of iterations grows with $k$, these are the first such rigorous bounds on the number of GMRES iterations for a preconditioned formulation of the Helmholtz equation, where the preconditioner is a symmetric positive-definite matrix.  
\end{abstract}

\begin{keyword}
Helmholtz equation,
finite element method,
coercive variational formulation,
pollution effect,
wavenumber-explicit analysis,
GMRES
\MSC[2010] 35J05 \sep  65N30 \sep  65F10
\end{keyword}

\end{frontmatter}

\section{Introduction: the goals of this paper}\label{sec:goals}

The Helmholtz equation $\Delta u +k^2 u=0$ is difficult to solve numerically, when the wavenumber $k$ is large, for the following three reasons:
\begin{enumerate}
\item 
The solutions of the homogeneous Helmholtz equation oscillate on a scale of $1/k$, and so to approximate them accurately with piecewise polynomial functions (e.g.\ using the finite element method) one needs the total number of degrees of freedom, $N$, to be proportional to $k^d$ as $k$ increases, $d\in\mathbb N$ being the spatial dimension.
\item The \emph{pollution effect} means that for fixed-order finite-element methods with $N\sim k^d$, even though the best-approximation error is bounded independently of $k$, the relative error grows with $k$. 
The fact that $N\gg k^d$ is required for the relative error to be bounded independently of $k$ leads to very large matrices, and hence to large (and sometimes intractable) computational costs.
\item 
The standard variational formulation of the Helmholtz equation is not coercive (i.e.~it is sign-indefinite) when $k$ is sufficiently large; in other words, zero is in the \emph{numerical range} or \emph{field of values} of the operator (see Definition \ref{def:fov} below). This indefiniteness is inherited by the Galerkin linear system; therefore 
even when the linear system has a unique solution (which depends on the discretisation and on $k$), one expects iterative methods to behave extremely badly if the system is not preconditioned.
\end{enumerate}

A new  formulation of the Helmholtz equation was introduced in \cite{MoSp:14} (see the recap in \S\ref{sec:coercive} below); the advantage of this new formulation is that the sesquilinear form is continuous and coercive for all $k>0$,  and thus this formulation does not suffer from the third difficulty above. The disadvantage is that it is posed in a subset of $H^1(\Omega)$, namely the space $V$ defined by \eqref{eq:V}, and conforming discretisations of $V$ require $C^1$ elements (like conforming discretisations of the standard least-squares formulation).

The goals of this paper are to answer the following two questions for  the $h$-version of the Galerkin method applied to the formulation of \cite{MoSp:14} (defined by \eqref{eq:coercive} below):
\begin{enumerate}
\item[Q1.] How must $h$ decrease with $k$ for the relative error to be below a prescribed ($k$-independent) accuracy as $k\tendi$? 
(See Definition \ref{def:hka} below for a more precise description of this property.)
\item[Q2.] How does the number of GMRES iterations grow with $k$? 
\end{enumerate}
We then compare the answers with the corresponding answers for the the standard variational formulation \eqref{eq:vfH1} and the least-squares formulation \eqref{eq:vfLS}.

We discretise all three formulations with $C^1$ elements; this is necessary for the formulation of \cite{MoSp:14} and the least-squares formulation because they are posed in the space $V$ \eqref{eq:V}. The standard variational formulation only requires $C^0$ elements, but we use $C^1$ elements to keep the comparison uniform across formulations.

Q1 is investigated in  \S\ref{sec:test_acc}, Q2 is investigated in \S\ref{sec:test_fast}, and the results are combined in \S\ref{sec:conc} to give $k$-explicit estimates on the finite-element error of the approximation to the Galerkin solution computed with a number of GMRES iterations whose $k$-dependence is given explicitly.

We highlight the complementary investigation of the formulation of \cite{MoSp:14} by Ganesh and Morgenstern in \cite{GaMo:17a} (and their generalisation of the formulation to problems with variable refractive index in \cite{GaMo:17b}). 
We compare the results of \cite{GaMo:17a, GaMo:17b} to our results in \S\ref{rem:GM1} below.

\section{Definitions and existing theory of the three variational formulations considered}

In this paper, we consider the model Helmholtz problem of the interior impedance problem, and we are particularly interested in the case that $k L$ is large, $L$ being a characteristic length of the computational domain. 

\begin{definition}[Interior Impedance Problem (IIP)]\label{def:IIP}
Let $\Omega\subset \Rea^d$, $d \geq 1$ be a bounded Lipschitz open set and let $\Gamma:= \partial \Omega$.
Given $f\in \LtO$, $g \in \LtG$, and $k>0$ find $u\in H^1(\Omega)$ such that
\begin{subequations}\label{eq:bvp_main}\vspace{-5mm}
\begin{align}
\cL u :=\Delta u + k^2 u &= -f \quad \mbox{ in } \Omega,\label{eq:bvp_main1}\\
\dnu -\ri k  u&= g \quad \mbox{ on } \Gamma,\label{eq:bvp_main2}
\end{align}
\end{subequations}
where $\partial_n$ denotes the normal derivative operator (see, e.g., \cite[Lemma 4.3]{Mc:00}).
\end{definition}

Since the fundamental solution of the operator $\cL$ is known explicitly, 
the IIP can be solved by boundary integral equations, which have the advantage that the dimension of the problem is reduced.
Nevertheless, there is large interest in the numerical solution of the IIP via discretisations in the domain (as opposed to on the boundary), partly motivated by the large interest in the heterogeneous Helmholtz equation $\Delta u+ k^2 n u=0$, where $n$ is a function of position; boundary-integral-equation techniques are no longer applicable to this latter equation since there does not exist an explicit expression for the fundamental solution.

\subsection{Recap of the theory of the standard variational formulation}

The standard variational formulation of the IIP is formed by multiplying the PDE \eqref{eq:bvp_main1} by a test function $\overline{v}$ and integrating by parts (i.e.~using Green's theorem).

\begin{definition}[Standard variational formulation in $H^1$]\label{def:standard}
Given $f\in \LtO, g\in \LtG$, and $k>0$, find $u\in H^1(\Omega)$ such that
\begin{equation}
a_{ST}(u,v)= F_{ST}(v) \quad\tfa v\in H^1(\Omega),
\label{eq:vfH1}
\end{equation}
$$\text{where}\quad
a_{ST}(u,v):=\int_\Omega\left( \gu \cdot\gvb - k^2 u\vb\right) \rd \bx - \ri k \int_\Gamma  \,u \vb \,\rd s
\quad \text{and}\quad
F_{ST}(v):= \int_\Omega f \vb \,\rd \bx + \int_\Gamma g \vb\, \rd s.
$$
\end{definition}
Given a finite-dimensional subspace $H_N\subset H^1(\Omega)$, the \emph{Galerkin method} is, 
\begin{equation}\label{eq:Galerkin}
\text{ find } u_N \in H_N \text{ such that } \quad a_{ST}(u_N ,v_N) = F_{ST}(v_N) \quad\tfa v_N\in H_N.
\end{equation}

In this paper we consider the $h$-version of the finite element method ($h$-FEM); i.e.~we consider a sequence $(H_N)_{N\in \mathbb{Z}}$ of finite-dimensional nested subspaces, with each $H_N$ a space of piecewise polynomials of some fixed degree $p\geq 0$ and mesh diameter $h$, so that the subspace dimension $N$ (i.e.~the total number of degrees of freedom) satisfies $N\sim h^{-d}$. 
We highlight that there are many other discretisations schemes for the Helmholtz equation; we touch on some of these below (e.g.~$hp$-FEM in \S\ref{sec:accurate}, Trefftz methods in \S\ref{sec:coercive}), but remained focused on the $h$-FEM because of its wide and sustained use by people interested in solving the Helmholtz equation in applications.

If $\{\phi_i: i = 1, \ldots ,   N \}$ is a (real) basis of $H_N$, then the Galerkin equations \eqref{eq:Galerkin} are equivalent to the $N$-dimensional linear system 
\begin{equation}\label{eq:discrete}
\matrixA \bu  = \mathbf{f} , 
\quad 
\text{with} 
\quad 
\matrixA := \matrixS -k^2 \matrixM - \ri k \matrixNo ,  
\end{equation}
where $\matrixS_{\ell,m} = \int_{\Omega} \nabla \phi_\ell \cdot \nabla
\phi_m \, \rd \bx$ 
is the stiffness matrix, $\matrixM_{\ell,m} = \int_{\Omega}
\phi_\ell  \phi_m \, \rd \bx$ is the mass matrix,   and
$\matrixNo_{\ell,m} = \int_{\Gamma} \phi_\ell  \phi_m\, \rd s$ is the boundary
mass matrix. 
Note that $\matrixA$ is symmetric but not Hermitian.

Throughout the paper, we use the notation $a\lesssim b$ to mean that there exists a $C>0$, independent of $h$ and $k$ such that $a\leq Cb$. We write $a\sim b$ when $a\lesssim b$ and $b\lesssim a$.

\subsubsection{First numerical-analysis goal: accuracy of Galerkin solutions.}\label{sec:accurate}
The standard numerical analysis of the $h$-FEM applied to elliptic PDEs is concerned with the limit $h\tendo$ with other parameters, such as $k$, fixed. 
When solving wave problems such as the IIP, it is natural to consider $h$ as a function of $k$, and have the goal that relative error is controlled to a prescribed accuracy uniformly in $k$. The following (non-standard) definition will make it easier to refer to this property in the rest of the paper. Recall that the natural norm on $H^1(\Omega)$ for Helmholtz problems is given by 
\begin{equation}\label{eq:H1k}
\N{v}_{H^1_k(\Omega)}^2:=\N{\nabla v}^2_{\LtO}+k^2\N{v}^2_{\LtO};
\end{equation}
the rationale behind this weighting is that, if $u$ satisfies $\Delta u +k^2 u=0$, one expects that $\|\gu\|_{L^2(\Omega)} \sim k\|u\|_{L^2(\Omega)}$, under which both terms in the norm \eqref{eq:H1k} are of the same magnitude (see Remark \ref{rem:ass1}).

\begin{definition}[$hk^a$-accurate]\label{def:hka}
Given $a>0$, we say that an $h$-FEM for the IIP is \emph{$hk^a$-accurate} if
given $0<\epsilon<1$ and $k_0>0$ there exists $C=C(\epsilon,k_0)$ such that if
$hk^a \leq C$,
then the sequence of Galerkin solutions $u_N$ satisfies
\begin{equation}\label{eq:rel_error}
\frac{\N{u-u_N}_{\HokO}}{\N{u}_{\HokO}}\leq \epsilon
\qquad
\text{for all}\; k\geq k_0.
\end{equation}
\end{definition}

The question of for what $a>0$ the standard FEM is $hk^a$-accurate
was thoroughly investigated by Ihlenburg and Babu\v{s}ka in 1-d \cite{IhBa:95a}, \cite{IhBa:97} (following earlier work by Bayliss, Goldstein, and Turkel \cite{BaGoTu:85}).
With $H^1$-conforming piecewise-polynomial subspaces of order $p\geq 1$, Ihlenburg and Babu\v{s}ka showed that when $p=1$, the $h$-FEM is $hk^{3/2}$-accurate (assuming $u\in H^2$) \cite[Equation 3.25]{IhBa:95}, \cite[Equation 4.5.15]{Ih:98} with numerical experiments indicating that this is sharp \cite[Figure 11]{IhBa:95a}, \cite[Figure 4.13]{Ih:98}.
Furthermore they showed that when $p\geq 2$ the $h$-FEM is $hk^{(2p+1)/(2p)}$-accurate (assuming $u\in H^{p+1}(\Omega)$ and $f\in H^{p-1}(\Omega)$) \cite[Corollary 3.2]{IhBa:97}, \cite[Theorem 4.27 and Equation 4.7.41]{Ih:98}. 

The situation for $d=2,3$ is less understood. Numerical experiments (e.g., \cite[\S3]{BaGoTu:85}) indicate that, when $p=1$, 
the $h$-FEM is $hk^{3/2}$-accurate, but this has yet to be proved. 
If $\Omega$ is a convex polygon (for $d=2$) or polyhedron (for $d=3$), 
Wu proved in \cite{Wu:13} that,  if $hk^{3/2}$ is sufficiently small,~then 
\begin{equation}\label{eq:Wu}
\|u-u_N\|_{\HokO} \lesssim \|f\|_{L^2(\Omega)}+ \|g\|_{H^{1/2}(\Gamma)}. 
\end{equation}
In the case when $\Omega$ is star-shaped with respect to a ball and $\Gamma$ is analytic, Melenk and Sauter proved that \eqref{eq:Wu} holds when $hk^{(p+1)/p}$ is sufficiently small \cite[Equation (5.14b)]{MeSa:11}.
\footnote{Melenk and Sauter also proved that the $hp$-FEM is quasi-optimal (i.e.~\eqref{eq:qo} holds) when $kh/p$ is sufficiently small and $p\geq C\log k$ for some sufficiently large $C$ \cite[Theorem 5.8]{MeSa:11}, and Esterhazy and Melenk proved analogous results for polygons \cite[Theorem 4.2]{EsMe:12}.}

\begin{remark}[Quasi-optimality]\label{rem:qo}
A related goal to \eqref{eq:rel_error} is for the $h$-FEM to be \emph{quasi-optimal}: 
\begin{equation}\label{eq:qo}
\N{u-u_N}_{\HokO} \lesssim \min_{v_N\in H_N} \N{u-v_N}_{\HokO}.
\end{equation}
In 1-d, Ihlenburg and Babu\v{s}ka proved that \eqref{eq:qo} holds for $p=1$ when $hk^2$ is sufficiently small \cite[Theorem 3]{IhBa:95}, \cite[Theorems 4.9 and 4.13]{Ih:98}, and numerical experiments indicated that this is sharp \cite[Figures 7 and 8]{IhBa:95}, \cite[Figure 4.11]{Ih:98}.
In 2- and 3-d, Melenk proved that \eqref{eq:qo} holds when $hk^2$ is sufficiently small \cite[Proposition 8.2.7]{Me:95}, under the a priori estimate \eqref{eq:bound} below and assuming that $u\in H^2(\Omega)$.
\end{remark}

\begin{remark}[The pollution effect]
The pollution effect can \emph{either} be defined by saying that a numerical method suffers the pollution effect if the condition ``$hk$ sufficiently small'' is not enough to ensure that the relative error is bounded independently of $k$ (i.e.~\eqref{eq:rel_error}), see \cite[\S4.6.1]{Ih:98}, \emph{or} by saying that a numerical method suffers the pollution effect if the condition ``$hk$ sufficiently small'' is not enough to ensure $k$-independent quasi-optimality (i.e.~\eqref{eq:qo}), see \cite[Definition 2.1]{BaSa:00}. 

We now show that if $k$-independent quasi-optimality holds, then the relative error decreases with $k$,
and thus the second definition of the pollution effect is stronger than the first. 
For continuous piecewise-polynomial elements on a simplicial mesh and $w\in H^2(\Omega)$ we have 
\begin{equation*}
\min_{v_N \in V_N} \N{w-v_N}_{\HokO} \lesssim h  \N{w}_{H^2(\Omega)} + hk \N{w}_{H^1(\Omega)}
\end{equation*}
by, e.g., properties of the quasi-interpolant given in \cite[Theorem 4.1]{ScZh:90}. Furthermore, 
assuming that derivatives of Helmholtz solutions scale with $k$, see Assumption \ref{ass:1} (in particular \eqref{eq:1} with $m=1$) below, 
if quasi-optimality \eqref{eq:qo} holds, we have that 
$
\N{u-u_N}_{\HokO}  \lesssim hk \N{u}_{\HokO}.
$
As recalled above, quasi-optimality holds for $hk^2$ sufficiently small, and thus the relative error then decreases like $1/k$ as $k$ increases in this case.
\end{remark}

\subsubsection{Second numerical-analysis goal: rapid solution of linear system}\label{sec:fast}

From \S\ref{sec:accurate}, the dimension $N$ of the Galerkin method $A$ must grow at least like $k^d$ as $k$ increases, which puts 3-d large-$k$ problems out of range of direct solvers. 
The Galerkin matrix $A$ \eqref{eq:discrete} is non-Hermitian, and in general it is nonnormal. General iterative methods such as preconditioned (F)GMRES therefore have to be employed for the solution of the linear system \eqref{eq:discrete}.

Without preconditioning, GMRES performs badly when applied to Helmholtz problems with $k$ large, and 
the search for good preconditioners for Helmholtz problems is therefore a topic of much current interest; see, e.g., the reviews \cite{ErGa:12}, \cite{GaZh:16} and the references therein. 
One is the reasons this is difficult 
is that analysing the convergence of (preconditioned) GMRES is hard, because an analysis of the spectrum of the system matrix alone is not sufficient for any rigorous convergence estimates. In \S\ref{sec:GMRES} we recap the existing tools based on the \emph{field of values}/\emph{numerical range}.

\begin{definition}[Field of values/numerical range]\label{def:fov}
Given an $N\times N$ complex matrix $\matrixC$, the \emph{field of values}/\emph{numerical range} of $\matrixC$ (in the Euclidean inner-product $\langle\cdot,\cdot\rangle_2$), $W(\matrixC)$, is defined by 
$$
W(\matrixC):= \big\{ \langle \matrixC \bv, \bv\rangle_2 : \bv \in \Com^N, \|\bv\|_2=1\big\}.
$$
\end{definition}

\subsubsection{The role of coercivity.}
Two key properties of sesquilinear forms, such as $a_{ST}(\cdot,\cdot)$ are \emph{continuity} and \emph{coercivity}. 
Indeed, given a sesquilinear form $a(\cdot,\cdot)$ on a Hilbert space $\cV$ with norm $\|\cdot\|_{\cV}$, 
\begin{align*}
&a(\cdot,\cdot) \text{ is continuous if there exists } \Ccont>0\text{ such that }
|a(u,v)| \leq \Ccont \N{u}_\cV \, \N{v}_\cV \text{ for all } u, v \in \cV, 
\\
&\text{and }a(\cdot,\cdot)\text{ is {coercive} if there exists }\Ccoer\text{ such that }
|a(v,v)| \geq \Ccoer \N{v}^2_{\cV} \text{ for all } v\in \cV;
\end{align*}
``sign-definite'' is often used  as a synonym for ``coercive''. 

The relevance of continuity and coercivity to the twin goals in \S\ref{sec:accurate} and \S\ref{sec:fast} is as follows.
If a sesquilinear form $a(\cdot,\cdot)$ is both continuous and coercive, then: 
\begin{enumerate}
\item C\'ea's lemma implies that the Galerkin method for variational problems involving $a(\cdot,\cdot)$ is quasi-optimal, i.e.~for any finite dimensional subspace $\cV_N \subset \cV$, the Galerkin solution $u_N$ exists, is unique, and satisfies
\begin{equation}\label{eq:Cea}
\N{u-u_N}_{\cV} \leq \frac{C_c}{\alpha}\min_{v_N\in\cV_N}\N{u-v_N}_{\cV};
\end{equation}
moreover, if $a(\cdot,\cdot)$ is self-adjoint, i.e.~$a(u,v)= \overline{a(v,u)}$, 
then 
(by, e.g., \cite[\S2.8]{BrSc:00})
\begin{equation}\label{eq:Cea2}
\N{u-u_N}_{\cV} \leq \sqrt{\frac{C_c}{\alpha}}\min_{v_N\in\cV_N}\N{u-v_N}_{\cV}. 
\end{equation}
\item There exists bounds on the number of iterations GMRES takes to solve the linear system involving the Galerkin matrix. Indeed, these bounds, summarised in \S\ref{sec:GMRES} below, need a bound on (i) the norm of the Galerkin matrix---this follows from continuity, and (ii) the distance of the field of values (see Definition \ref{def:fov}) from the origin---this follows from coercivity.
\end{enumerate}

\begin{lemma}[Continuity and lack of coercivity of standard formulation]\label{lem:contcoer_ST}

\

\noi (i) $a_{ST}(\cdot,\cdot)$ is continuous in $\HokO$ with norm \eqref{eq:H1k} with $\Ccont \sim 1$.

\noi (ii) There exists a $k_0>0$ such that if $k\leq k_0$, then $a_{ST}(\cdot,\cdot)$ is coercive  in $\HokO$ with norm \eqref{eq:H1k} with $\Ccoer =1/2$.

\noi (iii) Let $\la_1>0$ be the first Dirichlet eigenvalue of the negative Laplacian in $\Omega$. If $k^2\geq \la_1$ then there exists a $v\in\HokO$ with $a_{ST}(v,v) = 0$.
\end{lemma}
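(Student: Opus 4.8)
The plan is to handle the three parts in turn: parts (i) and (ii) rest on the standard $k$-weighted trace and trace--Poincar\'e inequalities, while part (iii) comes from reducing the equation $a_{ST}(v,v)=0$ to a Rayleigh-quotient condition.

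\emph{Part (i).} Writing out $a_{ST}$, Cauchy--Schwarz immediately gives $|\int_\Omega \gu\cdot\gvb\,\rd\bx| \le \N{\gu}_\LtO\N{\gv}_\LtO$ and $k^2|\int_\Omega u\vb\,\rd\bx| \le (k\N{u}_\LtO)(k\N{v}_\LtO)$, each bounded by $\N{u}_\HokO\N{v}_\HokO$. The only nontrivial term is the boundary term $k\int_\Gamma u\vb\,\rd s$, and here I would invoke the $k$-weighted multiplicative trace inequality $k\N{w}^2_\LtG \lesssim \N{w}^2_\HokO$, which yields $k|\int_\Gamma u\vb\,\rd s| \le (\sqrt k\N{u}_\LtG)(\sqrt k\N{v}_\LtG) \lesssim \N{u}_\HokO\N{v}_\HokO$. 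Summing the three bounds gives $\Ccont\sim 1$. One thing to flag is that this trace bound, and hence the $k$-uniform continuity constant, genuinely requires $k$ bounded away from $0$ (a constant function already shows the boundary term is not controlled as $k\tendo$), so (i) lives in the large-$k$ regime of interest.

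\emph{Part (ii).} Here I would test the form against $v$ itself: $a_{ST}(v,v) = (\N{\gv}^2_\LtO - k^2\N{v}^2_\LtO) - \ri k\N{v}^2_\LtG$, so the real part is $\N{\gv}^2_\LtO - k^2\N{v}^2_\LtO$ and the imaginary part is $-k\N{v}^2_\LtG\le 0$. Taking the rotated real part $\mathrm{Re}((1+\ri)a_{ST}(v,v)) = \N{\gv}^2_\LtO - k^2\N{v}^2_\LtO + k\N{v}^2_\LtG$, which is $\le\sqrt2\,|a_{ST}(v,v)|$, isolates two non-negative quantities, $\N{\gv}^2_\LtO$ and $k\N{v}^2_\LtG$, against the single bad term $k^2\N{v}^2_\LtO$. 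The key ingredient is the trace--Poincar\'e inequality $\N{v}^2_\LtO \lesssim \N{\gv}^2_\LtO + \N{v}^2_\LtG$ (proved by a standard Rellich compactness argument, since the right-hand side vanishes only on constants with zero trace, i.e.\ only on $0$); multiplying by $k^2$ and using $k\le k_0$ lets me absorb $k^2\N{v}^2_\LtO$ into the two good terms with arbitrarily small constants. Choosing $k_0$ small enough then gives $\mathrm{Re}((1+\ri)a_{ST}(v,v)) \ge \tfrac{1}{\sqrt2}\N{v}^2_\HokO(1-o(1))$, hence $|a_{ST}(v,v)| \ge \half\N{v}^2_\HokO$, i.e.\ $\Ccoer = \half$.

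\emph{Part (iii).} For the failure of coercivity I would argue that $a_{ST}(v,v)=0$ forces both parts to vanish: the imaginary part $-k\N{v}^2_\LtG=0$ gives $v\in H^1_0(\Omega)$ (zero trace), and the real part then gives $\N{\gv}^2_\LtO = k^2\N{v}^2_\LtO$, i.e.\ the Rayleigh quotient $R(v):=\N{\gv}^2_\LtO/\N{v}^2_\LtO$ equals $k^2$; conversely any such $v$ makes $a_{ST}(v,v)=0$. It therefore suffices to show that $k^2$ lies in the range of $R$ over $H^1_0(\Omega)\setminus\{0\}$. By the variational characterisation of eigenvalues this range has minimum $\la_1$ (attained at the first Dirichlet eigenfunction), is unbounded above (e.g.\ along higher eigenfunctions), and is an interval because $R$ is continuous and $H^1_0(\Omega)\setminus\{0\}$ is path-connected; hence it is exactly $[\la_1,\infty)$. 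Since $k^2\ge\la_1$, there is a $v$ with $R(v)=k^2$, and for this $v$ we have $a_{ST}(v,v)=0$. The genuinely nontrivial inputs are the two functional inequalities used in (i) and (ii), but both are standard; the most delicate point is the bookkeeping in (ii) that guarantees a final constant of at least $\half$, which is exactly what the freedom to shrink $k_0$ buys, while (iii) is conceptually the cleanest once one spots the reduction to solving $R(v)=k^2$ in $H^1_0(\Omega)$.
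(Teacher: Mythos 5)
Your proof is correct and follows essentially the same route as the sources the paper cites for this lemma (the paper's own ``proof'' is only a list of references): Cauchy--Schwarz plus the $k$-weighted multiplicative trace inequality for (i) -- including the correct caveat that $\Ccont\sim 1$ needs $k$ bounded away from $0$ -- absorption of $k^2\N{v}^2_\LtO$ via the trace--Poincar\'e inequality for (ii), and reduction to the Dirichlet Rayleigh quotient on $H^1_0(\Omega)$ for (iii). One small bookkeeping point in (ii): the absorption actually yields $\Re\big((1+\ri)a_{ST}(v,v)\big)\geq (1-o(1))\N{v}^2_{\HokO}$ with constant tending to $1$ (not $1/\sqrt{2}$) as $k_0\tendo$, which is exactly what you need so that dividing by $\sqrt{2}$ still clears the threshold $\Ccoer=1/2$; as literally written your intermediate constant would only give $\tfrac12(1-o(1))$.
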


\begin{proof}[References for proof]
(i) follows from the Cauchy--Schwarz and multiplicative trace inequalities; see, e.g., \cite[\S6.2]{Sp:15}.
For (ii), see, e.g., \cite[Lemma 6.4]{Sp:15}. For (iii), see, e.g., \cite[Lemma 6.5]{Sp:15}.
\end{proof}

\subsection{Definition of the coercive formulations of the Helmholtz IIP}\label{sec:coercive}

Although the standard variational formulation of the Helmholtz IIP (with sesquilinear form $a_{ST}(\cdot,\cdot)$) is not coercive for $k$ sufficiently large, there do exist coercive formulations of the Helmholtz IIP. These are summarised in \cite[\S I.2]{MoSp:14} (see also \cite[\S8.3]{Sp:15}). 
For these formulations discussed in \cite[\S I.2]{MoSp:14} at least one of the following is true: (i) the formulation is an integral equation on $\Gamma$; (ii) the formulation requires restricting the Hilbert space to include only piecewise solutions of the homogeneous Helmholtz equation (so-called \emph{operator-adapted} or \emph{Trefftz} spaces); (iii) the formulation is a least-squares formulation (under which any well-posed linear BVP is coercive). 

\paragraph{The MS formulation}
The paper \cite{MoSp:14} introduced a 
coercive formulation of the Helmholtz IIP, which we refer to as the MS formulation. The novelty of the MS formulation is that it is a formulation in $\Omega$ (not on $\Gamma$), does not require operator-adapted spaces, and is not a least-squares formulation.

To define the MS formulation, we first let $\nT $ denote the surface gradient on $\po$; recall that $\nT$ is such that if $v$ is differentiable in a neighbourhood of $\po$ then 
$\nT v = \gv -  \bn \dvdn $
on $\po$, where $\bn=\bn(\bx)$ is the outward-pointing unit normal vector at the point $\bx\in\po$).
Let 
\begin{equation}\label{eq:V}
V:=\left\{v: \,v\in H^1(\Omega), \; \Delta v \in L^2(\Omega),\; 
v \in H^1(\po), \,\dnv \in L^2(\po)
\right\};
\end{equation}
standard regularity results imply that the solution of the IIP is in $V$ (see \cite[Proposition 3.2]{MoSp:14}). 
In fact, the harmonic analysis results of Dahlberg, Jerison, and Kenig imply that $V=H^{3/2}(\Delta;\Omega):=\{w\in H^{3/2}(\Omega):\,\Delta w\in L^2(\Omega)\}$; see \cite[Lemme 2]{CoDa:98}.
An important feature of $V$ is that conforming FEMs in this space require $C^1$ elements \cite[Lemma 5.1]{MoSp:14}.

\begin{definition}[MS formulation]\label{def:MS}
Given $f\in \LtO, g\in \LtG$, and $k>0$, find $u\in V$ such that
\begin{equation}\label{eq:coercive}
b(u,v)= G(v) \quad\tfa v\in V,
\end{equation}
\begin{align}\label{eq:B}
\text{where}\quad 
b(u,v) &:= \int_\Omega \bigg(  \gu \cdot \bgv + k^2 u \vb + \left( \cM u + \frac{A}{k^2}\cL u\right) \overline{\cL v} \bigg) \rd \bx \\
&\hspace{-2mm}
- \int_{\GammaN} \bigg( \ri k u\,\overline{\cM v}  + \left( \bx \cdot \nT u - \ri k \beta u + \frac{d-1}{2} u \right) \overline{\dnv} + (\bx \cdot \bn)\left( k^2 u \vb - \nT u \cdot \overline{ \nT v}\right) \bigg)\rd s,\nonumber
\\
\label{eq:F}
\text{and}\quad G(v) &:= \int_\Omega \left( \overline{\cM v} - \frac{A}{k^2}\overline{\cL v}\right) f \, \rd \bx + \int_{\GammaN} \overline{\cM v} \,g \, \rd s,
\end{align}
where $\beta$ and $A$ are arbitrary real constants, $d$ is the spatial dimension, and
$$
\cM u := \bx \cdot \gu -\ri k \beta u + \frac{d-1}{2} u.
$$
\end{definition}

\begin{remark}[The MS formulation in 1-d]
In one space dimension $\Omega=(x_{-1},x_1)\subset\R$, the tangential gradient $\nabla_\Gamma$ terms drops and formulation \eqref{eq:coercive} reads
\begin{align*}
&\int_{x_{-1}}^{x_1} \bigg(
u'\overline{v}'+k^2u\overline{v}+\big((x-x_0)u'-\ri k\beta u+Ak^{-2}u''+Au\big)(\overline{v}''+k^2\overline{v})\bigg)\rd x\\
&-\!\sum_{\xi\in\{-1,1\}}\Big(\ri k (x_\xi-x_0)u(x_\xi)\overline{v}'(x_\xi)-k^2\beta u(x_\xi)\overline{v}(x_\xi)
-\ri k\beta u(x_\xi) \overline{v}'(x_\xi)\xi+(x_\xi-x_0)\xi k^2u(x_\xi)\overline{v}(x_\xi)\Big)\\
&=
\int_{x_{-1}}^{x_1} \Big((x-x_0)f\overline{v}'+(\ri k\beta-A)f\overline{v}-Ak^{-2}f\overline{v}''\Big)\rd x
+\sum_{\xi\in\{-1,1\}}\Big((x_\xi-x_0) \overline v'(x_\xi)+\ri k \beta \overline v(x_\xi)\Big)g(x_\xi)
\end{align*}
for all $v\in V$ where $x_0\in(x_{-1},x_1)$. 
Here the coefficient $\xi=\pm1$ in the boundary terms gives the correct sign to the flux terms.
\end{remark}

The MS formulation  comes from integrating over $\Omega$ the identity 
\begin{align}\label{eq:Morawetz_intro}
\overline{\cM v} \cL u + \cM u \bLv = \dive \Big[ \overline{\cM v}\, \gu + \cM u\, \bgv + \bx( k^2 u\vb-  \gu \cdot \bgv) \Big] 
-\gu \cdot \bgv -k^2 u \vb,
\end{align}
using the PDE \eqref{eq:bvp_main1} and the boundary conditions \eqref{eq:bvp_main2} and then adding on the least-squares-type term $\cL u\,\overline{\cL v}$. Multipliers of the form $\cM v$ were first used for the Helmholtz equation by Morawetz and Ludwig in \cite{MoLu:68} and Morawetz in \cite{Mo:75}; see the discussion in \cite[\S I.4]{MoSp:14} and \cite[Remark 2.7]{SpKaSm:15}.

A generalisation of this formulation to the IIP with the PDE \eqref{eq:bvp_main1} replaced by $\Delta u +k^2 nu =-f$, and with $n$ satisfying conditions that guarantee nontrapping of rays (see \cite[\S6]{GrPeSp:18}) was introduced by Ganesh and Morgenstern \cite{GaMo:17b}; this formulation arises by integrating over $\Omega$ the analogue of the identity \eqref{eq:Morawetz_intro} with $\cL$ replaced by $\cL_n:= \Delta + k^2 n$.

\paragraph{Least-squares formulation}

The least-squares formulation of the IIP is posed in the same space as the MS formulation, i.e.~$V$, and it is therefore natural to compare the two.

\begin{definition}[Least-squares variational formulation in $V$]\label{def:LS}
Given $f\!\in\!\LtO$, $g\in\!\LtG$, and $k>0$, find $u\in V$ such that
\begin{equation}\label{eq:vfLS}
a_{LS}(u,v)= F_{LS}(v) \quad\tfa v\in V,
\end{equation}
where 
$$
a_{LS}(u,v):=\int_\Omega \cL u \, \overline{\cL v} \,\rd \bx  + \int_\Gamma \big(\partial_n u - \ri  k\,u\big)
 \overline{\big(\partial_n v -\ri k  v\big)}\rd s
\quad\text{and}\quad
F_{LS}(v):= \int_\Omega f \,\vb \,\rd \bx + \int_\Gamma g\, \vb\, \rd s.
$$
\end{definition}

\begin{remark}[The MS formulation as a  ``stabilised method''] \quad
Formulation \eqref{eq:coercive} is a special case of a slightly more general family \cite[(3.4)]{MoSp:14}:
\begin{align*}
b_{Z}(u,v) = &\!\int_\Omega\! \bigg( \big(2-d +Z_1+Z_2\big)\gu \cdot \bgv 
+\big( d-Z_1-Z_2\big)k^2 u \overline v 
+ \left( \bx \cdot \nabla u + Z_2 u + \frac{A}{k^2}\cL u\right) \overline{\cL v} \bigg) \rd \bx 
\\
& - \int_{\GammaN} \bigg( \ri k u \,(\bx \cdot \overline{\nabla v} + Z_1\overline v)+ \left( \bx \cdot \nT u  + Z_2 u \right) \overline{\dvdn} + (\bx \cdot \bn)\left( k^2 u \overline v - \nT u \cdot \overline{ \nT v}\right) \bigg)\rd s,
\\
G_{Z}(v) = &\int_\Omega \left( \bx \cdot \overline{\nabla v} + Z_1\overline v - \frac{A}{k^2}\overline{\cL v}\right) f \, \rd \bx + \int_{\GammaN} (\bx \cdot \overline{\nabla v} + Z_1\overline v) \,g \, \rd s,
\end{align*}
where $Z_1,Z_2$ are complex parameters 
which, in \cite{MoSp:14}, were written as 
$Z_1=\alpha_1+\ri k\beta_1,Z_2=\alpha_2-\ri k \beta_2$.
Formulation \eqref{eq:coercive} corresponds to the choice $Z_1=\overline{Z_2}=\frac{d-1}2+\ri k \beta$.
This formulation is consistent and continuous in $V$ for any choice of $Z_1,Z_2\in\C$, while coercivity is ensured by a certain range of parameters only (\cite[Theorem~3.4]{MoSp:14}).
We can decompose it into the sum of  four terms:
\begin{align}\label{eq:AFdecomposition}
b_{Z}(u,v) = &  Z_1 a_{ST}(u,v) + \frac A{k^2} \int_\Omega\cL u\overline{\cL v}\,\rd\bx + Z_2 a_{0} (u,v) + a_{\bx}(u,v),\\
G_{Z}(v) = & Z_1 F_{ST}(v) + \frac A{k^2} \int_\Omega (-f)\overline{\cL v}\,\rd\bx +  F_{\bx}(v),
\nonumber
\end{align}
where $a_{ST}$ and $F_{ST}$ are from Definition~\ref{def:standard}, 
\begin{align*}
a_{0} (u,v) := \int_\Omega (\gu \cdot \bgv -k^2 u \overline v+u \overline{\cL v})\rd \bx  - \int_{\GammaN} u\overline{\dvdn}\rd s 
=0 \qquad\tfa u,v\in V
\end{align*}
from integration by parts (which means that the choice of $Z_2$ is irrelevant) and
\begin{align*}
a_{\bx}(u,v) := &\int_\Omega \bigg( (2-d)\gu \cdot \bgv + dk^2 u \overline v + ( \bx \cdot \nabla u ) \overline{\cL v} \bigg) \rd \bx 
\\
& - \int_{\GammaN} \bigg( \ri k u \,\overline{\bx \cdot \nabla v}+  \bx \cdot \nT u  \overline{\dvdn} + (\bx \cdot \bn)\left( k^2 u \overline v - \nT u \cdot \overline{ \nT v}\right) \bigg)\rd s,
\\
F_{\bx}(v) := &\int_\Omega  \overline{\bx \cdot \nabla v} \, f \, \rd \bx + \int_{\GammaN} \overline{\bx \cdot \nabla v} \,g \, \rd s.
\end{align*}
We have that $a_{\bx}(u,v)=F_{\bx}(v)$ for all $v\in V$ from combining (i) the expressions of $f$ and $g$, (ii) the divergence theorem applied to the Rellich identity \cite[eq.~(1.32)]{MoSp:14} and (iii) the divergence theorem applied to ($k^2$ times) 
$\dive[u\overline v\bx]=(\bx\cdot\nabla u)\overline v+(\bx\cdot\nabla \overline v)u+du\overline v$.

The decomposition \eqref{eq:AFdecomposition} shows that the MS formulation can be seen as a ``stabilised method'' \cite{Bu:13}, related to the ``Galerkin-least squares'' (GLS) method \cite{ThPi:95,HaHu:92}:
$b_{Z}(u,v) =G_Z(v)$ is a linear combination of the standard formulation \eqref{eq:vfH1}, the volume part of the least-squares formulation, and a consistent formulation $a_{\bx}(u,v) =F_\bx(v)$ arising from Rellich's identities. 
\end{remark}

\subsection{Continuity and coercivity of least-squares and MS formulations}

The continuity and coercivity properties of the least-squares and MS formulations depend on what norm is used for the space $V$ \eqref{eq:V}.

\begin{definition}[The norms $\N{\cdot}_{V_1}$, $\N{\cdot}_{V_2}$]
Let
\begin{align}
\N{v}_{V_1}^2:=&k^{-2}\N{\Delta v}^2_{\LtO}+\N{\nabla v}^2_{\LtO}+k^2\N{v}^2_{\LtO}
+\length \left( \N{\dnv}_{\LtG}^2+\N{\nT v}_{\LtG}^2 +k^2\N{v}^2_{\LtG}
\right), \label{eq:normV}
\\
\label{eq:Valt}
\N{v}_{V_2}^2:=&
\N{\cL v}^2_{\LtO}+\N{\nabla v}^2_{\LtO}+k^2\N{v}^2_{\LtO} +L\left(\N{\dnv}_{\LtG}^2+ \N{\nT v}^2_{\LtG}+k^2\N{v}^2_{\LtG}\right),
\end{align}
where $\length$ is the diameter (or some other characteristic length scale) of the domain.
\end{definition}
\noi Two remarks:
\begin{enumerate}
\item
We weight the derivatives by $k$ and include $\length$ in front of the boundary terms so that, when computed for plane-wave solutions of the homogeneous Helmholtz equation with wavenumber $k$, each term of the norm scales in the same way as $k$ and $\length$ vary; see \cite[Remark 3.8]{MoSp:14}. 
The norm equivalence
$
\frac{1}{\max\{3,2k^{-2}\}}
 \N{v}^2_{V_1} \leq \N{v}^2_{V_2} \leq (2k^2+1)
\N{v}^2_{V_1}
$
holds, 
and if $\cL v=0$ then
$
\half\N{v}^2_{V_1} \leq \N{v}^2_{V_2} \leq 
\N{v}^2_{V_1}.
$
\item The sharp bound \eqref{eq:bound} below shows that, for the solution $u$ of the IIP, each term in $\N{u}^2_{V_2}$ is of the same order.
\end{enumerate}

\begin{definition}[Star-shaped with respect to a ball]

\

(i) $\Omega$ is \emph{star-shaped with respect to $\bx_0\in \Omega$} if, whenever $\bx \in \Omega$, the segment $[\bx_0,\bx]\subset \Omega$.

(iii) $\Omega$ is \emph{star-shaped with respect to the ball $B_{a}(\bx_0)$} if it is star-shaped with respect to every point in $B_{a}(\bx_0)$.

(iii) $\Omega$ is \emph{star-shaped with respect to a ball} if there exists $a>0$ and $\bx_0\in\Omega$ such that $\Omega$ is star-shaped with respect to the ball $B_{a}(\bx_0)$.
\end{definition}

Recall that if $\Omega$ is Lipschitz, then it is star-shaped with respect to $B_{a}(\bx_0)$ if and only if
$(\bx-\bx_0) \cdot \bn(\bx) \geq {a}$ for all  $\bx \in \Gamma$ for which $\bn(\bx)$ is defined; see, e.g., \cite[Lemma 5.4.1]{Mo:11}.

\begin{theorem}[$k$-explicit bound on the solution of the IIP]\label{thm:bound}
If $\Omega$ is \emph{either} Lipschitz and star-shaped \emph{or} smooth (i.e.~$C^\infty$), then given $k_0>0$, the solution of the IIP satisfies
\begin{equation}\label{eq:bound}
\N{u}_{V_1}  \lesssim L\N{f}_\LtO + L^{1/2}\N{g}_\LtG\tfa k\geq k_0;
\end{equation}
moreover, this bound is sharp in its $k$-dependence.
\end{theorem}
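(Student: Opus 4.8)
The plan is to prove \eqref{eq:bound} by the Morawetz--Rellich multiplier method; conveniently, the multiplier $\cM$ and the divergence identity \eqref{eq:Morawetz_intro} that define the MS formulation are exactly the tools required. First I would set $v=u$ in \eqref{eq:Morawetz_intro}, use the PDE $\cL u=-f$ from \eqref{eq:bvp_main1}, integrate over $\Omega$, and apply the divergence theorem to obtain an identity of the schematic form
\[
\N{\nabla u}_{\LtO}^2+k^2\N{u}_{\LtO}^2 \;=\; \mathcal{B}(u)\;+\;2\,\mathrm{Re}\!\int_\Omega \overline{\cM u}\,f\,\rd\bx ,
\]
where $\mathcal{B}(u)$ collects the boundary contribution $\int_\Gamma[\dots]\cdot\bn\,\rd s$. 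On $\Gamma$ I would split $\nabla u=\nT u+\bn\,\dnu$, insert the impedance condition $\dnu=g+\ri k u$ from \eqref{eq:bvp_main2}, and reorganise; the structural point is that the leading boundary integrand is $(\bx\cdot\bn)\big(|\dnu|^2+k^2|u|^2-|\nT u|^2\big)$, so the star-shapedness hypothesis $(\bx\cdot\bn)\ge a>0$ gives this quadratic form a definite sign and lets me control $\N{\nT u}_{\LtG}$ together with the other boundary traces.

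Second, I would use the \emph{energy identity}: testing \eqref{eq:bvp_main1} with $\overline u$, integrating by parts, inserting the impedance condition and taking the imaginary part yields $k\N{u}_{\LtG}^2\lesssim\N{g}_{\LtG}\N{u}_{\LtG}+\N{f}_{\LtO}\N{u}_{\LtO}$. Combining the Rellich/Morawetz identity with this energy identity—using $(\bx\cdot\bn)\ge a>0$ to fix signs and Young's inequality to absorb the cross terms—produces an estimate of the form
\[
\N{\nabla u}_{\LtO}^2+k^2\N{u}_{\LtO}^2 + \length\big(\N{\dnu}_{\LtG}^2+\N{\nT u}_{\LtG}^2+k^2\N{u}_{\LtG}^2\big) \lesssim \length^2\N{f}_{\LtO}^2 + \length\N{g}_{\LtG}^2 ,
\]
which is exactly the content of \eqref{eq:bound} apart from the $k^{-2}\N{\Delta u}_{\LtO}^2$ term. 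That last term is immediate from the PDE, since $\Delta u=-f-k^2u$ gives $k^{-2}\N{\Delta u}_{\LtO}^2\le 2k^{-2}\N{f}_{\LtO}^2+2k^2\N{u}_{\LtO}^2$, and $k^2\N{u}_{\LtO}^2$ is already controlled. Tracking the powers of $k$ and of $\length$ (the latter entering through the $\bx$-weight in $\cM$ and through the scaling of the boundary terms in \eqref{eq:normV}) produces precisely the claimed $\length$ and $\length^{1/2}$ factors.

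The main obstacle is twofold. (a) For Lipschitz star-shaped $\Omega$ the Rellich identity is not classically valid, since $\bx\cdot\nabla u$ and its boundary traces require justification; I would handle this via the regularity $u\in V=H^{3/2}(\Delta;\Omega)$ (the Dahlberg--Jerison--Kenig result recalled after \eqref{eq:V}) together with a density/approximation argument, exhausting $\Omega$ by smooth star-shaped subdomains and passing to the limit. (b) The genuinely different case is $\Omega$ smooth but \emph{not} star-shaped: there $\bx\cdot\bn$ changes sign, the simple multiplier no longer yields a favourable boundary term, and the multiplier method must be replaced either by a geometry-adapted (escape) vector-field multiplier or by a semiclassical/microlocal propagation-of-singularities argument establishing the resolvent bound. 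This is the step I expect to be hardest, and the one where the nontrapping structure of the problem is essential.

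Finally, for sharpness I would exhibit a $k$-dependent family saturating \eqref{eq:bound}. Taking $f=0$ and $u=\re^{\ri k\,\bx\cdot\bfd}$ with $|\bfd|=1$ solves \eqref{eq:bvp_main1} and gives $g=\ri k(\bfd\cdot\bn-1)\re^{\ri k\,\bx\cdot\bfd}$ on $\Gamma$; a direct computation shows every term of $\N{u}_{V_1}^2$ scales like $k^2\length^d$ while $\N{g}_{\LtG}^2\sim k^2\length^{d-1}$, so $\N{u}_{V_1}\sim\length^{1/2}\N{g}_{\LtG}$ as $k\tendi$, proving the $g$-term cannot be improved. A companion construction with a source concentrated in $\Omega$ (for which the $\length\N{f}_{\LtO}$ contribution dominates) would establish sharpness of the remaining term.
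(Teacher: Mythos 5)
The first thing to note is that the paper does not prove this theorem at all: its ``proof'' is a list of citations, with the Lipschitz star-shaped case deferred to \cite[Remark 3.6]{MoSp:14} and the smooth case to \cite[Theorem 1.8, Corollary 1.9]{BaSpWu:16}. Your multiplier argument for the star-shaped case is essentially the argument of the first of these references (and of the earlier proofs of Melenk and Cummings--Feng for the $\HokO$ part of the bound): integrate the Morawetz--Rellich identity \eqref{eq:Morawetz_intro} with $v=u$, use star-shapedness to make the boundary quadratic form $(\bx\cdot\bn)\bigl(|\dnu|^2+k^2|u|^2-|\nT u|^2\bigr)$ sign-definite, close the estimate with the imaginary part of Green's identity, and recover $k^{-2}\N{\Delta u}^2_{\LtO}$ from the PDE. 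Your point (a), that the Rellich identity needs justification on a Lipschitz domain, is genuine and is handled in the literature exactly as you propose, via $u\in H^{3/2}(\Delta;\Omega)$ and density. So for that half of the hypothesis your plan is sound and matches the source.

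The genuine gap is the other half: $\Omega$ smooth but \emph{not} star-shaped. This is not a technical refinement of the multiplier argument --- when $\bx\cdot\bn$ changes sign no choice of origin rescues the boundary term --- and you name two candidate strategies but carry out neither. This case is the hard content of \cite{BaSpWu:16}: a semiclassical defect-measure/propagation argument exploiting the fact that the impedance condition is absorbing, so no energy can concentrate along trapped or glancing rays; it is a paper-length proof, not a step that can be left as ``the one I expect to be hardest''. Since the theorem asserts the bound under \emph{either} hypothesis, your proposal as written establishes only the star-shaped alternative. Two smaller remarks. First, even granting \cite[Corollary 1.9]{BaSpWu:16}, the paper points out that the $\N{\dnu}_\LtG$ and $\N{\Delta u}_\LtO$ terms are absent from that statement and must be added by a separate Green's-identity argument and by the PDE respectively; your Rellich identity supplies the former only in the star-shaped case. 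Second, your plane-wave computation for sharpness of the $g$-term is the standard one and is fine, but ``a source concentrated in $\Omega$'' is not the right construction for the $f$-term: one needs an oscillating source such as $f=-(\Delta+k^2)(\chi\,\re^{\ri k\bx\cdot\bfd})$ with $\chi\in C^\infty_c(\Omega)$, for which $\N{f}_\LtO\sim k$ and $\N{u}_{\HokO}\sim k$, so that the ratio is bounded below independently of $k$ and no negative power of $k$ can be inserted into \eqref{eq:bound}.
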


\begin{proof}[References for the proof of Theorem \ref{thm:bound}]
The bound is proved for Lipschitz star-shaped $\Omega$ in \cite[Remark 3.6]{MoSp:14} and for general smooth $\Omega$ in \cite[Theorem 1.8, Corollary 1.9]{BaSpWu:16}. Note that \cite[Corollary 1.9]{BaSpWu:16} does not include the $\|\dnu\|_\LtG$ or $\|\Delta u\|_{L^2(\Omega)}$ terms; the former can be included by an argument involving Green's identity essentially identical to the proof of this corollary (this argument in the case $f=0$ is in, e.g., \cite[Lemma 4.2]{Sp:14}). The latter can be included in a straightforward way using the PDE \eqref{eq:bvp_main1}.
The sharpness with respect to $k$ is proved in \cite[Lemma 4.10]{Sp:14} and  \cite[Lemma 5.5]{BaSpWu:16}.
Note that the bound $\|u\|_{\HokO} \lesssim L\N{f}_\LtO + L^{1/2}\N{g}_\LtG$ (contained in \eqref{eq:bound}) was proved when $\Omega$ is star-shaped with respect to a ball with smooth boundary in \cite[Proposition 8.1.4]{Me:95} for $d=2$ and \cite[Theorem 1]{CuFe:06} for $d\geq 3$.
\end{proof}

In the rest of the paper, we allow the constants in $\lesssim$ and $\sim$ to depend on $L$.

\begin{remark}
Proving that the sharp bound \eqref{eq:bound} holds when $\Omega$ is a general Lipschitz domain is still open. The best results in this direction are in \cite[Theorem 1.6]{Sp:14}, and show that, when $L=1$, 
$
\N{u}_{V_1}\lesssim k \N{f}_\LtO + k^{1/2} \N{g}_\LtG
$
in the general Lipschitz case, and 
$
\N{u}_{V_1}\lesssim  k^{3/4} \N{f}_\LtO + k^{1/4}\N{g}_\LtG
$
when $\Omega$ is piecewise-smooth. 
\end{remark}

\begin{lemma}[Continuity and coercivity of least-squares formulation]\label{lem:LSCC}\

\noi (i) In the norm $\N{\cdot}_{V_1}$, $a_{LS}(\cdot,\cdot)$ is continuous with $\Ccont\sim k^2$. 
If $\Omega$ is \emph{either} Lipschitz and star-shaped with respect to a ball, \emph{or} $C^\infty$, then $a_{LS}(\cdot,\cdot)$ is coercive in the norm $\N{\cdot}_{V_1}$
with $\Ccoer \sim 1$.

\noi (ii) In the norm $\N{\cdot}_{V_2}$, $a_{LS}(\cdot, \cdot)$ is continuous with $\Ccont \sim 1$. 
If $\Omega$ is \emph{either} Lipschitz and star-shaped with respect to a ball, \emph{or} $C^\infty$, then 
$a_{LS}(\cdot,\cdot)$ is coercive in the norm $\N{\cdot}_{V_2}$ with $C_{\coer}\sim 1$.
\end{lemma}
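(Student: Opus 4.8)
The plan is to exploit the fact that $a_{LS}(v,v)$ is real and nonnegative: directly from Definition \ref{def:LS},
\begin{equation*}
a_{LS}(v,v) = \N{\cL v}^2_{\LtO} + \N{\dnv - \ri k v}^2_{\LtG},
\end{equation*}
so the left-hand side of the coercivity inequality is exactly the least-squares residual. The key observation for coercivity is that any $v\in V$ is precisely the (unique) solution of the IIP \eqref{eq:bvp_main} with data $f=-\cL v$ and $g=\dnv-\ri k v$; these lie in $\LtO$ and $\LtG$ respectively by the definition \eqref{eq:V} of $V$, so the data is admissible, and by well-posedness of the IIP $v$ is that solution. Hence Theorem \ref{thm:bound} applies verbatim to $v$ and gives
\begin{equation*}
\N{v}_{V_1} \lesssim \length\N{\cL v}_{\LtO} + \length^{1/2}\N{\dnv-\ri k v}_{\LtG}.
\end{equation*}
Squaring and absorbing the powers of $\length$ into the constant yields $\N{v}^2_{V_1}\lesssim a_{LS}(v,v)$, which is coercivity in $\N{\cdot}_{V_1}$ with $\Ccoer\sim 1$.

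For coercivity in $\N{\cdot}_{V_2}$ I would note that, comparing \eqref{eq:normV} and \eqref{eq:Valt}, the two norms differ only in their first term (namely $k^{-2}\N{\Delta v}^2_\LtO$ versus $\N{\cL v}^2_\LtO$), every other term being identical. Since $a_{LS}(v,v)\geq\N{\cL v}^2_\LtO$ controls precisely this differing term, while the remaining shared terms are bounded by $\N{v}^2_{V_1}\lesssim a_{LS}(v,v)$ from the previous step, adding the two estimates gives $\N{v}^2_{V_2}\lesssim a_{LS}(v,v)$, i.e.\ coercivity in $\N{\cdot}_{V_2}$ with $\Ccoer\sim 1$. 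It is this coincidence of $V_1$ and $V_2$ away from their first term that lets the single a priori bound serve both norms at once.

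For continuity I would argue directly. Cauchy--Schwarz in the volume and on the boundary gives
\begin{equation*}
|a_{LS}(u,v)| \leq \N{\cL u}_{\LtO}\N{\cL v}_{\LtO} + \N{\dnu-\ri k u}_{\LtG}\N{\dnv-\ri k v}_{\LtG},
\end{equation*}
and the elementary estimate $\N{\dnv-\ri k v}^2_\LtG \leq 2\N{\dnv}^2_\LtG + 2k^2\N{v}^2_\LtG \lesssim \length^{-1}\N{v}^2_{V_j}$ (valid for $j=1,2$) controls the boundary term by $\lesssim\N{u}_{V_j}\N{v}_{V_j}$ in either norm. The two norms then differ only in how they measure $\N{\cL v}_\LtO$. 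In $\N{\cdot}_{V_2}$ one has $\N{\cL v}_\LtO\leq\N{v}_{V_2}$ directly, giving $\Ccont\sim 1$. In $\N{\cdot}_{V_1}$, instead, the triangle inequality $\N{\cL v}_\LtO\leq\N{\Delta v}_\LtO + k^2\N{v}_\LtO$ together with the bounds $\N{\Delta v}_\LtO\leq k\N{v}_{V_1}$ and $k^2\N{v}_\LtO\leq k\N{v}_{V_1}$ (both read off from \eqref{eq:normV}) gives $\N{\cL v}_\LtO\lesssim k\N{v}_{V_1}$, so the volume term is $\lesssim k^2\N{u}_{V_1}\N{v}_{V_1}$; this dominates and produces $\Ccont\sim k^2$.

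The only real content of the proof is the coercivity step, and within it the main obstacle is already resolved by Theorem \ref{thm:bound}: all the difficulty sits in the sharp $k$-explicit a priori bound, which is exactly what forces the star-shaped (or smooth) hypothesis on $\Omega$ to appear in the coercivity half of the statement but not in the continuity half. Everything else is routine bookkeeping of powers of $k$.
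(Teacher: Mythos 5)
Your proof is correct and follows essentially the same route as the paper, which establishes continuity by Cauchy--Schwarz and the triangle inequality and deduces coercivity from the a priori bound of Theorem \ref{thm:bound}; your write-up simply makes explicit the step (identifying $v$ as the IIP solution with data $f=-\cL v$, $g=\dnv-\ri k v$) that the paper leaves implicit. The bookkeeping of the powers of $k$ and of $\length$ is accurate, given that the paper allows the implied constants to depend on $\length$.
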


\begin{proof}[Proof of Lemma \ref{lem:LSCC}]
The continuity results follow from the Cauchy--Schwarz and triangle inequalities.
The coercivity results follow from  Theorem \ref{thm:bound}.
\end{proof}

\begin{lemma}[Continuity of the MS formulation]\label{lem:MScont}
If 
\begin{equation}\label{eq:beta2}
\beta \leq CL
\end{equation}
 for some $C>0$ independent of $k$ and $L$, then 

\noi (i) $b(\cdot,\cdot)$ is continuous in the norm $\N{\cdot}_{V_1}$ with $\Ccont \sim k$.

\noi (ii) $b(\cdot,\cdot)$ is continuous in the norm $\N{\cdot}_{V_2}$ with $\Ccont \sim 1$.

\end{lemma}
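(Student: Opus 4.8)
The plan is to prove both parts by a direct, term-by-term application of the Cauchy--Schwarz inequality to the eight integrals comprising $b(u,v)$ in \eqref{eq:B}, bounding each factor by an appropriate piece of $\N{u}_{V_j}$ or $\N{v}_{V_j}$ and tracking only the power of $k$ each term contributes. Since the constants hidden in $\lesssim$ may depend on $L$, and the hypothesis $\beta\leq CL$ lets me treat $\beta$ as a constant multiple of $L$, I would fix $L,\beta,A,d$ and concentrate on the $k$-scaling.

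First I would record the elementary consequences of the weights in \eqref{eq:normV}: from $\N{\cdot}_{V_1}$ one reads off $\N{\nabla w}_\LtO\lesssim\N{w}_{V_1}$, $\N{w}_\LtO\lesssim k^{-1}\N{w}_{V_1}$, $\N{\Delta w}_\LtO\lesssim k\N{w}_{V_1}$, and on the boundary $\N{\partial_n w}_\LtG,\N{\nT w}_\LtG\lesssim\N{w}_{V_1}$ together with $\N{w}_\LtG\lesssim k^{-1}\N{w}_{V_1}$. From these I would derive three reusable estimates: (a) $\N{\cM w}_\LtO\lesssim\N{w}_{V_1}$, using $|\bx|\lesssim L$ and $\beta\lesssim L$; (b) after decomposing $\nabla w=\nT w+\bn\,\partial_n w$ so that $\bx\cdot\nabla w=\bx\cdot\nT w+(\bx\cdot\bn)\,\partial_n w$ and using $|\bx|,|\bx\cdot\bn|\lesssim L$, the trace bound $\N{\cM w}_\LtG\lesssim\N{w}_{V_1}$; and (c), since $\cL w=\Delta w+k^2 w$, the bound $\N{\cL w}_\LtO\lesssim k\N{w}_{V_1}$. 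Estimate (c) is the crux: $\N{\cdot}_{V_1}$ controls only $k^{-1}\N{\Delta w}_\LtO$ and $k\N{w}_\LtO$, so $\cL w$ is controlled by the $V_1$-norm only up to a factor of $k$.

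With these in hand I would bound the four volume terms $\int_\Omega\gu\cdot\bgv\,\rd\bx$, $\int_\Omega k^2 u\vb\,\rd\bx$, $\int_\Omega\cM u\,\overline{\cL v}\,\rd\bx$ and $\frac{A}{k^2}\int_\Omega\cL u\,\overline{\cL v}\,\rd\bx$: the first, second and fourth are each $\lesssim\N{u}_{V_1}\N{v}_{V_1}$ (for the fourth, $k^{-2}\cdot k\cdot k=1$), while the third is $\lesssim\N{\cM u}_\LtO\N{\cL v}_\LtO\lesssim k\,\N{u}_{V_1}\N{v}_{V_1}$ by (a) and (c); this single term is what produces the factor $k$. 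The four boundary terms $\int_\Gamma\ri k u\,\overline{\cM v}\,\rd s$, $\int_\Gamma(\bx\cdot\nT u-\ri k\beta u+\frac{d-1}{2}u)\overline{\partial_n v}\,\rd s$, $\int_\Gamma(\bx\cdot\bn)k^2 u\vb\,\rd s$ and $\int_\Gamma(\bx\cdot\bn)\nT u\cdot\overline{\nT v}\,\rd s$ are all $\lesssim\N{u}_{V_1}\N{v}_{V_1}$: in each, the $k$ from $ku$, the $k^2$, or the $k\beta$ is exactly compensated by the factor $k^{-1}$ in the $\LtG$-bounds for $\N{u}_\LtG$ and $\N{v}_\LtG$, and the first factor of the second term is estimated exactly as in (a) with $\nabla$ replaced by $\nT$. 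Summing gives part (i), $\Ccont\lesssim k$.

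Part (ii) is the identical computation with $\N{\cdot}_{V_1}$ replaced by $\N{\cdot}_{V_2}$; the only change is that \eqref{eq:Valt} controls $\N{\cL w}_\LtO\lesssim\N{w}_{V_2}$ \emph{directly}, replacing (c). Hence the dominant volume term becomes $\int_\Omega\cM u\,\overline{\cL v}\,\rd\bx\lesssim\N{u}_{V_2}\N{v}_{V_2}$ and the least-squares term becomes $\lesssim k^{-2}\N{u}_{V_2}\N{v}_{V_2}$, so every term is $\lesssim\N{u}_{V_2}\N{v}_{V_2}$ and $\Ccont\lesssim 1$. I expect the only real obstacle to be bookkeeping in the boundary integrals: the term $\int_\Gamma\ri k u\,\overline{\cM v}\,\rd s$ forces the tangential/normal splitting in (b), because the norms control $\N{\nT v}_\LtG$ and $\N{\partial_n v}_\LtG$ separately but not a full trace of $\nabla v$, and one must verify that every $k$-power cancels after this splitting. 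Finally, to justify ``$\sim$'' rather than merely ``$\lesssim$'', I would exhibit test functions for which $\int_\Omega\cM u\,\overline{\cL v}\,\rd\bx$ dominates and saturates (c)---for instance a $v$ supported in the interior and oscillating on a scale finer than $1/k$, so that $\N{\cL v}_\LtO\sim k\N{v}_{V_1}$---confirming that the $k$-power in (i) (and its absence in (ii)) cannot be improved.
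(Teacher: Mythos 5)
Your proposal is correct and follows essentially the same route as the paper, which simply defers to the term-by-term Cauchy--Schwarz argument of \cite[Lemma 3.3]{MoSp:14} for part (i) and notes that (ii) is almost identical; your identification of $\N{\cL v}_{\LtO}\lesssim k\N{v}_{V_1}$ versus $\N{\cL v}_{\LtO}\le\N{v}_{V_2}$ as the source of the factor $k$, and the tangential/normal splitting of $\bx\cdot\nabla$ on $\Gamma$, is exactly the mechanism in that proof. (The only caveat is that the factor $A/k^2$ must be bounded for the stated scalings, consistent with the pairings $A=1/3$ with $V_1$ and $A=k^2$ with $V_2$ used elsewhere in the paper; your saturation argument for the lower bound in ``$\sim$'' is only sketched, but the upper bound is what the paper uses.)
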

\begin{proof}
(i) is proved in \cite[Lemma 3.3]{MoSp:14} using the Cauchy-Schwarz inequality; the proof of (ii) follows in an almost identical way.
\end{proof}

\begin{theorem}[Coercivity of MS formulation]\label{thm:MScoer}
Let $\Omega$ be a Lipschitz domain with diameter $\length$ that is star-shaped with respect to a ball (without loss of generality at the origin), i.e.~there exists a $\gamma>0$ such that
$
\bx \cdot \bn(\bx) \geq \gamma \length
$
for all $\bx \in \po$ such that $\bn(\bx)$ exists.
Assume that 
\begin{equation}\label{eq:beta1}
\beta \geq \frac{ \length}{2}\left(1+ \frac{4}{\gamma} + \frac{\gamma}{2}\right).
\end{equation}

\noi (i) If $A=1/3$, then, for any $k>0$, \;
$\Re b(v,v) \geq \frac{\gamma}{4} \N{v}^2_{V_1}$ \; for all $v \in V$.

\noi (ii) If $A=k^2$, then for any $k>0$, \;
$\Re b(v,v) \geq \frac{\gamma}{4} \N{v}^2_{V_2}$ \; for all $v \in V$.
\end{theorem}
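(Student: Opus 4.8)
The plan is to test the form on the diagonal $u=v$, take the real part, and split $\Re b(v,v)$ into a volume contribution, which I will reduce to a fixed fraction of the $\LtO$-terms of the norm, and a boundary quadratic form, which I will control using the star-shapedness $\bx\cdot\bn\geq\gamma\length$ together with the lower bound \eqref{eq:beta1} on $\beta$.

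First I would treat the volume integrand obtained by setting $u=v$ in \eqref{eq:B}, namely $|\gv|^2+k^2|v|^2+\big(\cM v+\tfrac{A}{k^2}\cL v\big)\overline{\cL v}$; its real part already contains the nonnegative least-squares term $\tfrac{A}{k^2}|\cL v|^2$, so the work is in $\Re[\cM v\,\overline{\cL v}]$. Writing $\cM v=\big(\bx\cdot\gv+\tfrac{d-1}2 v\big)-\ri k\beta v$, I would handle the real multiplier by the Rellich/Pohozaev identity — the coefficient $\tfrac{d-1}2$ being chosen exactly so the dimensional factors $(d-2)$ coming from $\Delta v$ and $(d-1)$ coming from the zeroth-order piece cancel and one is left with
$$
2\Re\Big[\big(\bx\cdot\bgv+\tfrac{d-1}2\vb\big)\cL v\Big]=\dive\big[\cdots\big]-|\gv|^2-k^2|v|^2,
$$
while the imaginary multiplier is reduced by Green's identity, $k\beta\int_\Omega\Im[v\,\overline{\cL v}]\,\rd\bx=k\beta\,\Im\int_\po v\,\overline{\dnv}\,\rd s$, to a pure boundary term. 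Integrating and recombining with the $|\gv|^2+k^2|v|^2$ already present leaves the volume part of $\Re b(v,v)$ equal to $\half\N{\gv}^2_\LtO+\half k^2\N{v}^2_\LtO+\tfrac{A}{k^2}\N{\cL v}^2_\LtO$ plus boundary flux. For part~(ii) the term $\tfrac{A}{k^2}\N{\cL v}^2_\LtO=\N{\cL v}^2_\LtO$ is already the leading term of $\N{\cdot}_{V_2}$; for part~(i) I would trade part of the $k^2\N{v}^2_\LtO$ mass, via $\N{\Delta v}^2_\LtO\leq 2\N{\cL v}^2_\LtO+2k^4\N{v}^2_\LtO$, to recover $k^{-2}\N{\Delta v}^2_\LtO$, the choice $A=1/3$ being what keeps all residual coefficients positive.

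The main obstacle is the boundary part. I would collect every boundary contribution — the explicit surface integral in \eqref{eq:B}, the Rellich flux above, and the Green's-identity $\beta$-term — and decompose $\gv=\nT v+\bn\,\dnv$ on $\po$, so that $|\gv|^2=|\nT v|^2+|\dnv|^2$ and $\bx\cdot\gv=\bx\cdot\nT v+(\bx\cdot\bn)\dnv$. A sequence of cancellations then occurs: the $\dnv$-with-$\nT v$ cross terms cancel between the flux and the surface integral, and the $\beta$-terms linear in $\dnv$ cancel against the Green's-identity term, leaving a quadratic form whose diagonal part $\tfrac12(\bx\cdot\bn)\big(|\dnv|^2+|\nT v|^2\big)+\big(\beta-\tfrac12(\bx\cdot\bn)\big)k^2|v|^2$ is nonnegative by star-shapedness and \eqref{eq:beta1}, and whose only surviving cross terms couple $v$ to $\dnv$ and to $\nT v$ through the impedance factor $\ri k v\,\overline{\cM v}$.

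The crux is then to absorb these cross terms by Young's inequality against the $\tfrac12(\bx\cdot\bn)|\dnv|^2$, $\tfrac12(\bx\cdot\bn)|\nT v|^2$ and $k^2|v|^2$ reservoirs; tracking the constants, using $\bx\cdot\bn\geq\gamma\length$ for the good terms and $\bx\cdot\bn\leq\length$ for the bad ones, shows that the threshold \eqref{eq:beta1} on $\beta$ is exactly what makes the boundary form dominate $\tfrac{\gamma}{4}\length\big(\N{\dnv}^2_\LtG+\N{\nT v}^2_\LtG+k^2\N{v}^2_\LtG\big)$. Combining the volume and boundary lower bounds — and noting $\gamma\leq\tfrac12$ for a domain star-shaped with respect to a ball, so that $\tfrac{\gamma}4$ is compatible with the interior coefficients $\tfrac12$ and $1$ — yields $\Re b(v,v)\geq\tfrac{\gamma}4\N{v}^2_{V_1}$ in case (i) and $\Re b(v,v)\geq\tfrac{\gamma}4\N{v}^2_{V_2}$ in case (ii).
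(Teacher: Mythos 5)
Your proposal is correct and follows essentially the same route as the paper, whose proof of this theorem is simply a citation of \cite[Theorem 3.4]{MoSp:14}: that argument is exactly the Morawetz--Ludwig/Rellich multiplier computation you describe, with the real part of the multiplier handled by the identity \eqref{eq:Morawetz_intro}, the $-\ri k\beta v$ part reduced to a boundary term by Green's identity, the surviving boundary cross terms absorbed via star-shapedness and the threshold \eqref{eq:beta1}, and part (ii) obtained by repeating the volume step with $A=k^2$ while treating the boundary terms identically. Your reconstruction of the cancellations (tangential--normal cross terms against the Rellich flux, the $\beta$-terms against the Green's-identity contribution) and of the role of $A=1/3$ versus $A=k^2$ in matching the $k^{-2}\N{\Delta v}^2_{\LtO}$ versus $\N{\cL v}^2_{\LtO}$ terms of the two norms is accurate.
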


\begin{corollary}\label{cor:MS}
For the MS formulation (with sesquilinear form $b(\cdot,\cdot)$), if $\beta$ satisfies both \eqref{eq:beta2} and \eqref{eq:beta1}, we have $\Ccont/\Ccoer\sim  k$ in the $\N{\cdot}_{V_1}$ norm (if $A=1/3$), and $\sim 1$ in the $\N{\cdot}_{V_2}$ norm (if $A=k^2$).
\end{corollary}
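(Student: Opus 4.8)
The plan is to obtain the corollary simply by combining the continuity bounds of Lemma~\ref{lem:MScont} with the coercivity bounds of Theorem~\ref{thm:MScoer}, since $\Ccont/\Ccoer$ is, by definition, the ratio of the constants appearing in those two results. First I would note that the two hypotheses on $\beta$ are compatible: condition \eqref{eq:beta1} requires $\beta\geq\frac{\length}{2}(1+\frac4\gamma+\frac\gamma2)$, while \eqref{eq:beta2} requires $\beta\leq C\length$, and both hold simultaneously once the constant $C$ in \eqref{eq:beta2} is taken to be at least $\frac12(1+\frac4\gamma+\frac\gamma2)$. Hence a $\beta$ satisfying both constraints exists, and for any such $\beta$ the hypotheses of \emph{both} Lemma~\ref{lem:MScont} and Theorem~\ref{thm:MScoer} are met, so both continuity and coercivity are available at once.

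For the $\N{\cdot}_{V_1}$ statement with $A=1/3$, Lemma~\ref{lem:MScont}(i) gives $\Ccont\sim k$. For coercivity, Theorem~\ref{thm:MScoer}(i) gives $\Re b(v,v)\geq\frac\gamma4\N{v}^2_{V_1}$; since the right-hand side is nonnegative, $|b(v,v)|\geq\Re b(v,v)\geq\frac\gamma4\N{v}^2_{V_1}$, so $b(\cdot,\cdot)$ is coercive (in the sense defined above) with $\Ccoer=\gamma/4$. As $\gamma$ is a fixed geometric constant of $\Omega$, this is independent of $k$, so $\Ccoer\sim1$ and therefore $\Ccont/\Ccoer\sim k$. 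The $\N{\cdot}_{V_2}$ statement with $A=k^2$ is handled identically: Lemma~\ref{lem:MScont}(ii) gives $\Ccont\sim1$ and Theorem~\ref{thm:MScoer}(ii) gives $\Re b(v,v)\geq\frac\gamma4\N{v}^2_{V_2}$, whence $\Ccoer=\gamma/4\sim1$ and $\Ccont/\Ccoer\sim1$.

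I do not expect any genuine obstacle, since all of the content lives in Lemma~\ref{lem:MScont} and Theorem~\ref{thm:MScoer}, and the corollary is merely their juxtaposition. The only two points meriting a word of care are (a) verifying that the lower and upper constraints on $\beta$ are simultaneously satisfiable, and (b) passing from the real-part lower bound of Theorem~\ref{thm:MScoer} to the modulus lower bound $|b(v,v)|\geq\Ccoer\N{v}^2$ required by the definition of coercivity, which is immediate because the real part is nonnegative.
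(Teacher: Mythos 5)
Your proposal is correct and follows exactly the route the paper intends: the corollary is stated as the immediate juxtaposition of the continuity constants from Lemma~\ref{lem:MScont} and the coercivity constants from Theorem~\ref{thm:MScoer}, with $\Ccoer=\gamma/4$ independent of $k$. Your two points of care --- the simultaneous satisfiability of \eqref{eq:beta2} and \eqref{eq:beta1}, and passing from $\Re b(v,v)\geq\Ccoer\N{v}^2$ to $|b(v,v)|\geq\Ccoer\N{v}^2$ --- are both handled correctly and are left implicit in the paper.
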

\begin{proof}[Proof of Theorem \ref{thm:MScoer}]
(i) is proved in \cite[Theorem 3.4]{MoSp:14} using the identity \eqref{eq:Morawetz_intro}.
(ii) follows from the proof of \cite[Theorem 3.4]{MoSp:14} taking $A=k^2$ in \cite[Equation~(3.9)]{MoSp:14}, and then dealing with the terms on $\Gamma$ exactly as before. (Note that \cite[Equation~(3.11)]{MoSp:14} has a typo: $\cL v$ should be $\Delta v$.)
\end{proof}

\section{Accuracy of Galerkin solutions}\label{sec:test_acc}

\subsection{The discrete space}\label{sec:Vn}

To compare the properties of the different variational formulations (\eqref{eq:vfH1}, \eqref{eq:coercive} and \eqref{eq:vfLS}), we apply the Galerkin method to each formulation with the same discrete space $V_N\subset V$.
As explained in \cite[Lemma~5.1]{MoSp:14}, the elements of $V_N$ must be in $C^1(\overline\Omega)$.
For simplicity we restrict ourselves to the cases $d=1$ and $d=2$.

In one space dimension, we simply choose $V_N$ to be the Hermite element space, i.e.\ the space of $C^1$ piecewise-cubic polynomials: 
if $\Omega=(x_0,x_{n})\subset\R$, given a mesh with nodes $x_0<x_1<\cdots<x_{n-1}<x_n$, 
$$
V_N = \big\{v\in C^1(x_0,x_n),\; v|_{(x_{j-1},x_{j})}\text{ is a polynomial of degree }\le3,\; j=1,\ldots,n\big\}.
$$
The meshwidth is defined as $h=\max_{j=1,\ldots,n}(x_{j}-x_{j-1})$ and $N:=\dim V_N=2n+2$.
The degrees of freedom are function values and first derivatives in the nodes $x_j$.

In two space dimensions we consider a rectangular domain $\Omega=(x_0,x_{n_x})\times(y_0,y_{n_y})\subset\R^2$, 
for $x_0<x_1<\cdots<x_{n_x-1}<x_{n_x}$ and $y_0<y_1<\cdots<y_{n_y-1}<y_{n_y}$,  
$$
V_N =  \big\{v\in C^1\big((x_0,x_{n_x})\times(y_0,y_{n_y})\big),\; 
v|_{(x_{j-1},x_{j})\times(y_{j'-1},y_{j'})}\in \mathbb{Q}^3
,\; j=1,\ldots,n_x,\;  j'=1,\ldots,n_y\big\}.
$$
Here $\mathbb Q^3$ is the space of polynomials of degree at most 3 separately in the $x$ and $y$ variables.
The meshwidth is $h=\max_{j=1,\ldots,n_x; j'=1,\ldots,n_y}((x_{j}-x_{j-1})^2+(y_{j'}-y_{j'-1})^2)^{1/2}$ and $N:=\dim V_N=(2n_x+2)(2n_y+2)$.

In the numerical experiments we will consider only uniform meshes, i.e.\ with identical elements.
The theory presented below can be easily adapted to different spaces with higher polynomial degrees and/or continuity constraints and defined on suitable curvilinear domains
(one would then modify the proof of Lemma~\ref{lem:H1error} using the general results of \cite{BeBuRiSa:11}).

\subsection{Error bounds from continuity and coercivity}
We denote by $u_N$ the solution of the Galerkin method in $V_N$ applied to one of the formulations \eqref{eq:vfH1}, \eqref{eq:coercive} or \eqref{eq:vfLS}; the choice of the formulation will be clear from the context.

\begin{lemma}[Quasi-optimality of Galerkin method]\label{lem:QuasiOpt}\quad
Let $V_N$ be a finite-dimensional subspace of $V$.
(i) If the MS formulation with $A=1/3$ is solved using the Galerkin method, then, given $k_0>0$, 
\begin{equation}\label{eq:qo1}
\N{u-u_N}_{\HokO}\le
\N{u-u_N}_{V_1} \lesssim k \min_{v_N\in V_N}\N{u-v_N}_{V_1}\quad\tfa k\geq k_0.
\end{equation}
(ii)
If \emph{either} the least-squares formulation \emph{or} the MS formulation with $A=k^2$ are solved using the Galerkin method, then, given $k_0>0$, 
\begin{equation}\label{eq:qo2}
\N{u-u_N}_{\HokO}\le
\N{u-u_N}_{V_2} \lesssim \min_{v_N\in V_N}\N{u-v_N}_{V_2}\quad\tfa k\geq k_0.
\end{equation}
\end{lemma}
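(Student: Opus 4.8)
The plan is to derive both quasi-optimality statements from the abstract C\'ea-type framework already set up in the excerpt, since each formulation in question has been shown to be both continuous and coercive in the relevant norm. For part (ii), the least-squares form $a_{LS}(\cdot,\cdot)$ is continuous with $\Ccont\sim1$ in $\N{\cdot}_{V_2}$ and coercive with $\Ccoer\sim1$ by Lemma~\ref{lem:LSCC}(ii), and the MS form $b(\cdot,\cdot)$ with $A=k^2$ is continuous with $\Ccont\sim1$ by Lemma~\ref{lem:MScont}(ii) and coercive with $\Ccoer=\gamma/4$ by Theorem~\ref{thm:MScoer}(ii). In both cases the ratio $\Ccont/\Ccoer\sim1$, so C\'ea's lemma \eqref{eq:Cea} immediately gives $\N{u-u_N}_{V_2}\lesssim\min_{v_N\in V_N}\N{u-v_N}_{V_2}$ with a $k$-independent constant, which is exactly the right-hand inequality in \eqref{eq:qo2}.

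For part (i), I would argue identically but track the $k$-dependence of the constant. Here the MS form with $A=1/3$ is continuous in $\N{\cdot}_{V_1}$ with $\Ccont\sim k$ by Lemma~\ref{lem:MScont}(i) and coercive with $\Ccoer=\gamma/4$ by Theorem~\ref{thm:MScoer}(i), so $\Ccont/\Ccoer\sim k$ and C\'ea's lemma \eqref{eq:Cea} produces the factor of $k$ appearing on the right of \eqref{eq:qo1}. The one subtlety to record is that coercivity here is stated for $\Re b(v,v)$ rather than $|b(v,v)|$; since $\Re b(v,v)\le|b(v,v)|$, the real-part lower bound suffices to verify the coercivity hypothesis of C\'ea's lemma, so no extra work is needed.

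The remaining piece in both parts is the left-hand inequality, namely $\N{u-u_N}_{\HokO}\le\N{u-u_N}_{V_1}$ (resp.\ $\le\N{u-u_N}_{V_2}$). This is not a consequence of C\'ea's lemma but simply of the pointwise domination of the $\HokO$ norm by the $V_1$ and $V_2$ norms: comparing \eqref{eq:H1k} with \eqref{eq:normV} and \eqref{eq:Valt}, every term $\N{\nabla\cdot}^2_{\LtO}+k^2\N{\cdot}^2_{\LtO}$ making up $\N{\cdot}^2_{\HokO}$ already appears with coefficient at least one in each of $\N{\cdot}^2_{V_1}$ and $\N{\cdot}^2_{V_2}$, the remaining volume and boundary terms being nonnegative. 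Applying this with $v=u-u_N$ gives the desired bound directly.

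I do not anticipate a genuine obstacle, since all the hard analytic content (the coercivity of Theorem~\ref{thm:MScoer}, which relies on the Morawetz/Rellich identity, and the sharp a priori bound of Theorem~\ref{thm:bound} underpinning Lemma~\ref{lem:LSCC}) is quoted from earlier in the paper. The only point demanding care is bookkeeping of the $k$-powers in the continuity constants — making sure the factor $k$ in \eqref{eq:qo1} comes precisely from $\Ccont\sim k$ in the $V_1$ norm while the $V_2$ estimate \eqref{eq:qo2} stays $k$-independent — and confirming that the self-adjoint refinement \eqref{eq:Cea2} is \emph{not} invoked, since neither $b(\cdot,\cdot)$ nor $a_{LS}(\cdot,\cdot)$ is self-adjoint, so only the weaker C\'ea bound \eqref{eq:Cea} applies.
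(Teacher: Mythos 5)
Your proposal is correct and follows essentially the same route as the paper: C\'ea's lemma combined with the continuity and coercivity constants from Lemma~\ref{lem:LSCC}, Lemma~\ref{lem:MScont}, and Theorem~\ref{thm:MScoer} (i.e.\ Corollary~\ref{cor:MS}), plus the observation that $\N{\cdot}_{\HokO}$ is dominated termwise by $\N{\cdot}_{V_1}$ and $\N{\cdot}_{V_2}$. One correction to your closing remark: $a_{LS}(\cdot,\cdot)$ \emph{is} self-adjoint --- being a least-squares form it satisfies $a_{LS}(u,v)=\overline{a_{LS}(v,u)}$ --- and the paper accordingly uses the sharper bound \eqref{eq:Cea2} for it; since $\Ccont/\Ccoer\sim 1$ in the $\N{\cdot}_{V_2}$ norm either way, your use of the weaker bound \eqref{eq:Cea} still yields \eqref{eq:qo2} and the error is harmless here.
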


\begin{proof}
For the MS formulation, the bounds follow from C\'ea's lemma (for non-self-adjoint sesquilinear forms) \eqref{eq:Cea} and Corollary \ref{cor:MS}. 
For the LS formulation, the bounds follow from C\'ea's lemma (for self-adjoint sesquilinear forms) \eqref{eq:Cea2} and
Lemma \ref{lem:LSCC}(ii).
\end{proof}

\begin{remark}\label{rem:WNormSucks}
The different $k$-dependences of the quasi-optimality constants in Lemma \ref{lem:QuasiOpt} suggests we should prefer case (ii), i.e.~we should 
use the MS formulation with $A=k^2$ or the least squares formulation and work in the $\N{\cdot}_{V_2}$ norm.
However, we see from numerical experiments that the quasi-optimality in  $\N{\cdot}_{V_2}$ norm does not provide a good accuracy in $H^1_k(\Omega)$ and $L^2(\Omega)$ norms (despite these being bounded by $\N{\cdot}_{V_2}$).
This is because the best-approximation error in  $\N{\cdot}_{V_2}$ in the regimes of interest can be much larger than the best-approximation error in the other norms considered.
The numerical experiments in \S\ref{sec:acc_num} show that the difference between the best-approximation errors in the different norms can outweigh the different $k$-dependence of the quasi-optimality constants.

Figure \ref{fig:uPlot} shows a simple and representative example in one dimension.
The blue dashed lines represent the real part of $u(x)=\re^{\ri k x}$, solution of \eqref{eq:bvp_main} in $\Omega=(0,1)$ with $f=0$, $g(0)=-2\ri k$, $g(1)=0$ and $k=30\pi\approx 94.25$.
The red curves depict the real parts of the projections on $V_N$, with $n=100$ elements (corresponding to 13.47 degrees of freedom per wavelength), orthogonal with respect to the $\N{\cdot}_{V_1}$ norm (left panel) and with respect to the $\N{\cdot}_{V_2}$ norm (right panel).
In both cases we chose $L=1$.
The $\N{\cdot}_{V_1}$-orthogonal projection is visually indistinguishable from the exact solution, while the amplitude of the $\N{\cdot}_{V_2}$-orthogonal projection is less than half the correct one.

Table \ref{tab:uPlot} shows the relative error measured in four norms ($\N{\cdot}_{L^2(\Omega)}$, $\N{\cdot}_{H^1_k(\Omega)}$, $\N{\cdot}_{V_1}$, $\N{\cdot}_{V_2}$) of the four orthogonal projections corresponding to the same four norms.
The values in the table confirm quantitatively what is visible in the plot:
the $\N{\cdot}_{V_1}$-orthogonal projection has $L^2(\Omega)$ and $H^1_k(\Omega)$ relative errors comparable to the best approximation in these norms, while the $\N{\cdot}_{V_2}$-orthogonal projection has much larger relative errors.
We observe similar phenomena also for non-homogeneous problems ($f\ne0$).
\end{remark}
\begin{figure}[htb]
\centering
\includegraphics[width=74mm, clip, trim= 50 0 50 0]{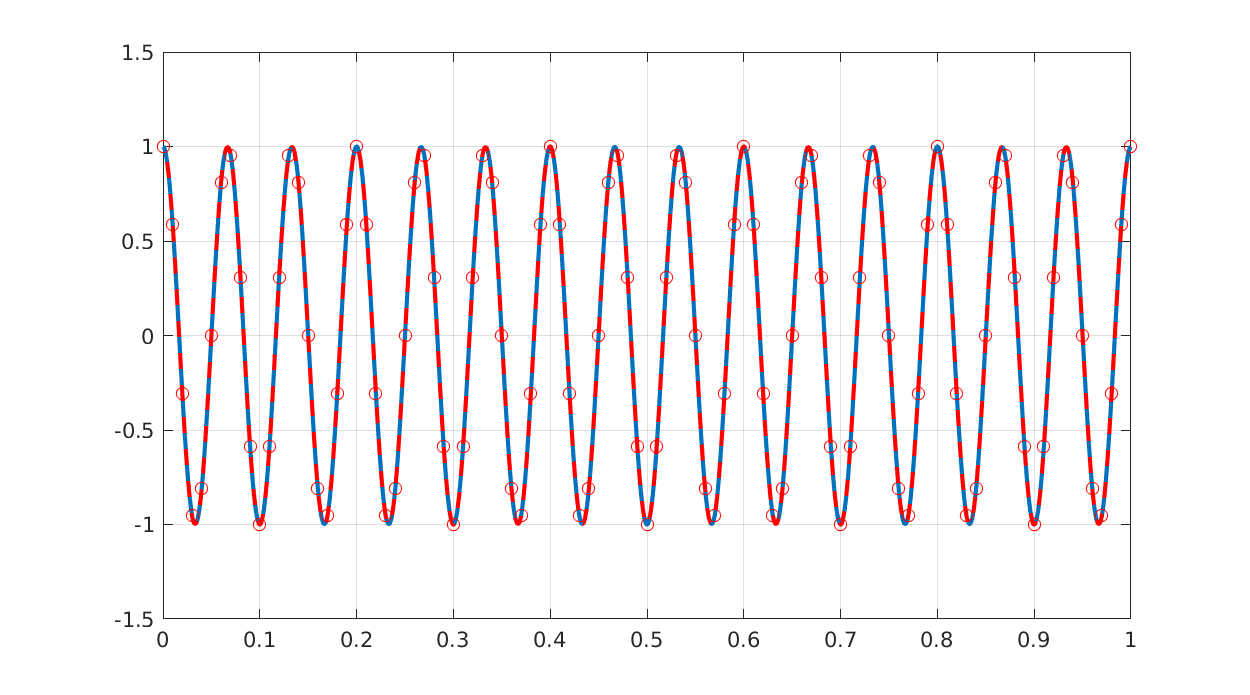}
\includegraphics[width=74mm, clip, trim= 50 0 50 0]{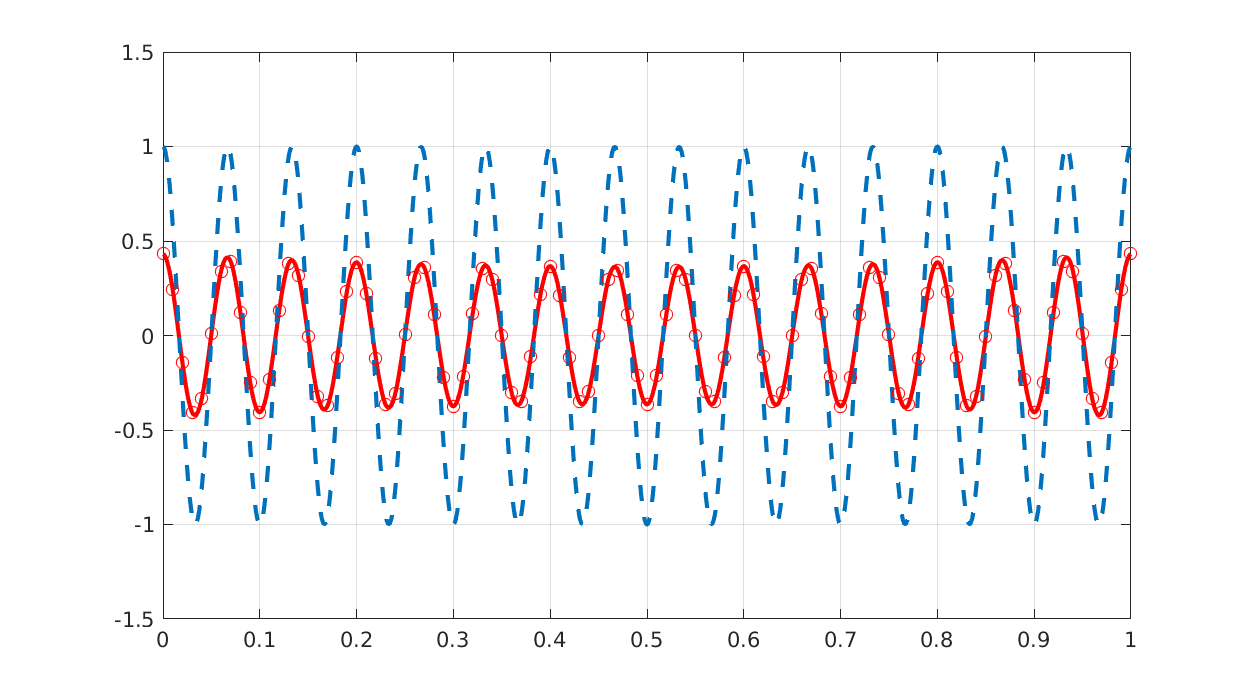}
\caption{Blue dashed line: the real part of $u(x)=\re^{\ri 30\pi x}$ in $\Omega=(0,1)$;
red line: the projection of $u$ on $V_N$ for $n=100$ orthogonal with respect to the $\N{\cdot}_{V_1}$ norm (left panel) and with respect to the $\N{\cdot}_{V_2}$ norm (right panel).
See Remark~\ref{rem:WNormSucks} and Table~\ref{tab:uPlot}.}
\label{fig:uPlot}
\end{figure}
\begin{table}[htb]
\begin{tabular}{|l |l l l l|}\hline
& $\N{\cdot}_{L^2(\Omega)}$ rel.\ err. &$\N{\cdot}_{H^1_k(\Omega)}$ rel.\ err. &$\N{\cdot}_{V_1}$ rel.\ err. &$\N{\cdot}_{V_2}$ rel.\ err.\\\hline
$\N{\cdot}_{L^2(\Omega)}$-orthog.\ proj. 
&  0.000556   &    0.00308   &      0.0173  &   1.46 \\
$\N{\cdot}_{H^1_k(\Omega)}$-orthog.\ proj. 
& 0.000574   &    0.00301  &      0.0143 &    1.35\\
$\N{\cdot}_{V_1}$-orthog.\ proj. 
& 0.000824  &    0.00333   &    0.0125  &   1.23\\
$\N{\cdot}_{V_2}$-orthog.\ proj. 
& 0.615     &    0.615  &      0.589 &   0.764
\\\hline
\end{tabular}
\caption{The relative errors measured in four different norms of the best approximations in the same four norms of $u(x)=\re^{\ri 30\pi x}$ in $\Omega=(0,1)$, with $V_N$ for $n=100$.
See Remark~\ref{rem:WNormSucks} and Figure~\ref{fig:uPlot}.
}
\label{tab:uPlot}
\end{table}

\begin{remark}[Sharpness of the quasi-optimality constants]\label{rem:QuasiOptSurface}
The linear dependence on $k$ in the quasi-optimality bound in \eqref{eq:qo1} comes from the ratio between coercivity and continuity constants. 
We believe this $k$-dependence is not sharp.
Figure \ref{fig:QuasiOptSurface} shows (in logarithmic scale) the empirical quasi-optimality ratio $C_{qo}(u,k,V_N):=\N{u-u_N}_{V_1}/\min_{v_N\in V_N}\N{u-v_N}_{V_1}$ between Galerkin and best-approximation error for a one-dimensional experiment in $\Omega=(0,1)$ with solution $u(x)=\re^{\ri k x}$ for several values of $h$ and $k$.
As expected, the quasi-optimality ratio $C_{qo}(u,k,V_N)$ is close to 1 for large values of $kh$ (when $V_N$ does not contain any good approximation of $u$) and for small values of $hk$ ($V_N$ is sufficiently fine to overcome the pollution effect).
(Actually for $hk\lesssim 10^{-2}$ the ratio appears to be smaller than 1 because the $V$-orthogonal projection used to compute the best-approximation error is ill-conditioned, so the values displayed for this regime are not reliable; this is visible in the left corner of the figure.)
The most interesting region is the ``ridge'' crossing diagonally the $\log_{10}h$/$\log_{10}k$ plane, corresponding to the pollution regime.
The maximal value of $C_{qo}(u,k,V_N)$ over the considered spaces $V_N$ (i.e.\ over the values of $h$) for each $k$ is represented by the red continuous curve.
The black dashed line is its best linear fit, showing that the empirical quasi-optimality ratio grows like 
$$C_{qo}(u,k,V_N)\approx k^{0.407},$$
i.e.\ slower than the linear growth predicted by \eqref{eq:qo1}.
Other boundary value problems, e.g.\ non-homogeneous ones, give similar plots with exponents between $0.4$ and $0.41$.
\end{remark}
\begin{figure}[htb]
\centering
\includegraphics[width=100mm, clip, trim= 20 20 40 30]{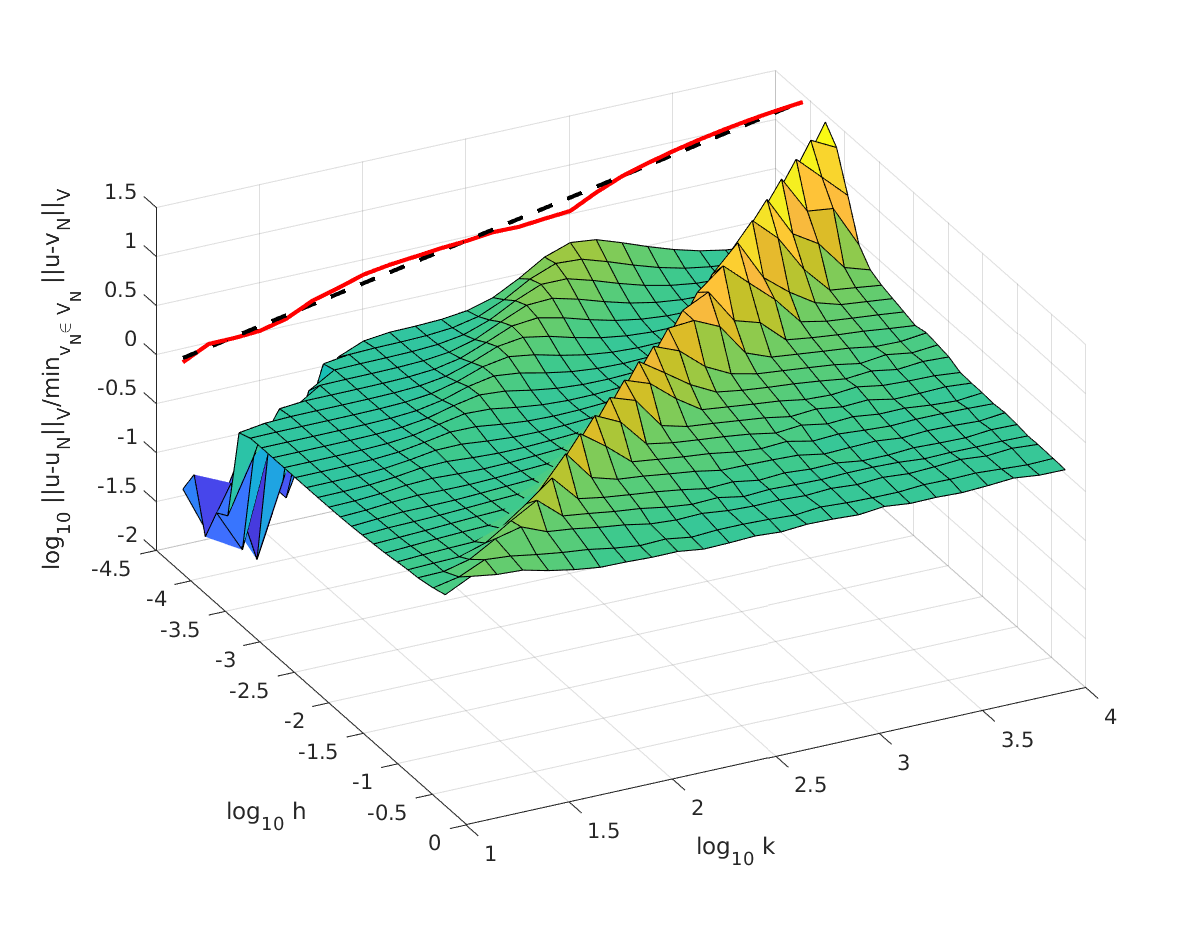}
\caption{The empirical quasi-optimality ratio $\N{u-u_N}_{V_1}/\min_{v_N\in V_N}\N{u-v_N}_{V_1}$ for $u(x)=\re^{\ri k x}$ in $\Omega=(0,1)$ as function of $h$ and $k$.
See Remark \ref{rem:QuasiOptSurface} for details.}
\label{fig:QuasiOptSurface}
\end{figure}

In order to convert the quasi-optimality bounds \eqref{eq:qo} and \eqref{eq:qo2} into bounds on the relative error, we make the following assumption.

\begin{assumption}[Oscillatory behaviour of Helmholtz solutions]\label{ass:1}
Assume that $u\in H^{4}(\Omega)$ and 
\begin{equation}\label{eq:1}
\N{u}_{H^{m+1}(\Omega)} \lesssim k\N{u}_{H^m(\Omega)}, \quad m=0,1,2,3.
\end{equation}
\end{assumption}

\begin{remark}[Discussion of Assumption \ref{ass:1}]\label{rem:ass1}
Assumption \ref{ass:1} concerns the behaviour of $u$ as a function of $k$ when $k\tendi$. When generalisations of the IIP are used to model the scattering and propagation of waves, the data $f$ and $g$ depends on $k$.
Assumption \ref{ass:1} is therefore implicitly an assumption on $f$ and $g$, and it can be violated by choosing $f$ and $g$ such that the solution oscillates on a scale smaller than $k^{-1}$, like, e.g., the plane wave solution $\exp (\ri k^2 \bx\cdot \ba)$. 
However, with physically realistic $f$ and $g$ one expects their scales of oscillation to match the $k^{-1}$ scale inherent in the Helmholtz operator $\Delta +k^2$.
For example, scattering by an incident plane wave with frequency $k^2$, $\exp (\ri k^2 \bx\cdot \ba)$, would be modelled by the Helmholtz equation $\Delta +(k^2)^2$ and not by the equation $\Delta +k^2$. 

Two situations where \eqref{eq:1} has been established are the following:
In 1-d, solutions of $\Delta u +k^2u=0$ can be written explicitly in terms of plane waves, i.e.~trigonometric functions, and then it is straightforward to prove \eqref{eq:1}; see \cite[\S3.4]{IhBa:95}.
In higher dimensions, for the sound-soft scattering problem with incident plane-wave $\exp (\ri k \bx\cdot \ba)$, the analogue of \eqref{eq:1} was proved for the Neumann trace of the solution in \cite[Theorems 1.1 and 1.2]{GrLoMeSp:15} and \cite[Theorem 1.16 and Equation 4.10]{GaMuSp:17}.
\end{remark}

\begin{lemma}\label{lem:VboundedBySobolev}
For all $w\in V\cap H^2(\Omega)$ and $k\ge k_0>0$, we have
\begin{equation}\label{eq:VboundedBySobolev}
\N{w}_{V_1}\lesssim k^{1/2} \N{w}_\HokO + k^{-1/2}\N{w}_{H^2(\Omega)},
\qquad
\N{w}_{V_2}\lesssim k \N{w}_\HokO +  \N{w}_{H^2(\Omega)}.
\end{equation}
\end{lemma}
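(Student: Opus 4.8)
The plan is to bound each term of $\N{w}_{V_1}$ and $\N{w}_{V_2}$ separately, disposing of the volume contributions directly and reducing the boundary contributions to the multiplicative trace inequality. The volume terms are immediate from the definitions: $\N{\nabla w}_{\LtO}$ and $k\N{w}_{\LtO}$ are each bounded by $\N{w}_{\HokO}$ by \eqref{eq:H1k}; the remaining volume term of $\N{\cdot}_{V_1}$ satisfies $k^{-1}\N{\Delta w}_{\LtO}\lesssim k^{-1}\N{w}_{H^2(\Omega)}$, while the remaining volume term of $\N{\cdot}_{V_2}$ satisfies, by the triangle inequality, $\N{\cL w}_{\LtO}=\N{\Delta w+k^2 w}_{\LtO}\le\N{\Delta w}_{\LtO}+k^2\N{w}_{\LtO}\lesssim\N{w}_{H^2(\Omega)}+k\N{w}_{\HokO}$.

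For the boundary terms, note first that $\N{\partial_n w}^2_{\LtG}+\N{\nT w}^2_{\LtG}=\N{\nabla w}^2_{\LtG}$, since the normal and tangential parts give an orthogonal decomposition of the gradient trace at each boundary point; hence all boundary contributions (which are identical in $\N{\cdot}_{V_1}$ and $\N{\cdot}_{V_2}$) amount to estimating $Lk^2\N{w}^2_{\LtG}$ and $L\N{\nabla w}^2_{\LtG}$. The key tool is the multiplicative trace inequality $\N{\phi}^2_{\LtG}\lesssim L^{-1}\N{\phi}^2_{\LtO}+\N{\phi}_{\LtO}\N{\nabla\phi}_{\LtO}$ for $\phi\in\HoO$ (already used in the proof of Lemma~\ref{lem:contcoer_ST}(i)). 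Taking $\phi=w$ gives $Lk^2\N{w}^2_{\LtG}\lesssim k^2\N{w}^2_{\LtO}+Lk^2\N{w}_{\LtO}\N{\nabla w}_{\LtO}\lesssim k\N{w}^2_{\HokO}$, where I use $k^2\N{w}_{\LtO}\N{\nabla w}_{\LtO}=k\,(k\N{w}_{\LtO})\,\N{\nabla w}_{\LtO}\le k\N{w}^2_{\HokO}$. Taking $\phi$ to be each component of $\nabla w$ (which lies in $\HoO$ because $w\in H^2(\Omega)$) and using Cauchy--Schwarz on the resulting sum gives $L\N{\nabla w}^2_{\LtG}\lesssim\N{\nabla w}^2_{\LtO}+L\N{\nabla w}_{\LtO}\N{w}_{H^2(\Omega)}\lesssim\N{w}^2_{\HokO}+L\N{w}_{\HokO}\N{w}_{H^2(\Omega)}$.

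It then remains to split the cross term $L\N{w}_{\HokO}\N{w}_{H^2(\Omega)}$ by Young's inequality, with a weighting chosen differently for the two norms. For $\N{\cdot}_{V_1}$ I would write $\N{w}_{\HokO}\N{w}_{H^2(\Omega)}=(k^{1/2}\N{w}_{\HokO})(k^{-1/2}\N{w}_{H^2(\Omega)})\le\half(k\N{w}^2_{\HokO}+k^{-1}\N{w}^2_{H^2(\Omega)})$, which matches the square of the first asserted bound; for $\N{\cdot}_{V_2}$ the unweighted inequality $\N{w}_{\HokO}\N{w}_{H^2(\Omega)}\le\half(\N{w}^2_{\HokO}+\N{w}^2_{H^2(\Omega)})$ matches the square of the second. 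Collecting the volume and boundary estimates, absorbing factors such as $\N{w}^2_{\HokO}\le k_0^{-1}k\N{w}^2_{\HokO}$ (valid for $k\ge k_0$) into the implied constants, and taking square roots yields both inequalities in \eqref{eq:VboundedBySobolev}.

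The only genuine subtlety, and the step to carry out with care, is this bookkeeping of $k$-powers: the two target bounds differ precisely in how the cross term $L\N{w}_{\HokO}\N{w}_{H^2(\Omega)}$ produced by the trace inequality is distributed, so Young's inequality must be applied with weight $k^{1/2}$ for $\N{\cdot}_{V_1}$ and weight $1$ for $\N{\cdot}_{V_2}$. Once the multiplicative trace inequality is invoked there is no real analytic obstacle; the remainder is routine manipulation with norms.
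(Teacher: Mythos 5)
Your proof is correct and follows essentially the same route as the paper's: volume terms are bounded directly from the definitions of $\cL$ and $\N{\cdot}_{\HokO}$, and the boundary terms are handled by the multiplicative trace inequality applied to $w$ and to $\nabla w$, with the $k$-powers tracked so that the cross term lands as $k^{1/2}\N{w}_{\HokO}+k^{-1/2}\N{w}_{H^2(\Omega)}$. The only cosmetic differences are that you use the exact orthogonal decomposition $\N{\partial_n w}^2_{\LtG}+\N{\nT w}^2_{\LtG}=\N{\nabla w}^2_{\LtG}$ where the paper simply bounds each piece by $\N{\gamma(\nabla w)}_{\LtG}$, and you phrase the weighting as a choice in Young's inequality rather than applying the trace inequality in the pre-weighted form \eqref{eq:Euan1}.
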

\begin{proof}
Using the definition of $\cL$, we find
$$
\N{\cL w}_\LtO \lesssim \N{w}_{H^2(\Omega)}+ k^2 \N{w}_{L^2(\Omega)}
$$
and similarly 
$$
k^{-1}\N{\Delta w}_\LtO \lesssim k^{-1}\N{w}_{H^2(\Omega)}.
$$
The multiplicative trace inequality 
$$
\N{w}^2_\LtG \lesssim \N{w}_{\LtO}\N{w}_{\HoO}
$$
(see, e.g., 
\cite[Theorem 1.6.6]{BrSc:00}) implies that, given $k_0>0$,
\begin{equation}\label{eq:Euan1}
k\N{w}^2_\LtG\lesssim \N{w}^2_{\HokO} \quad \text{ i.e.} \quad k^{1/2}\N{w}_\LtG\lesssim \N{w}_{\HokO}, 
\end{equation}
for all $k\geq k_0$, 
and thus
$$
k\N{w}_\LtG\lesssim k^{1/2} \N{w}_{\HokO}.
$$
Now, using \eqref{eq:Euan1} applied to $\nabla w\in H^1(\Omega)$, we have,
$$
\N{\partial_n w}_\LtG \lesssim \N{\gamma(\nabla w)}_\LtG \lesssim 
k^{-1/2} \N{w}_{H^2(\Omega)}+ k^{1/2} \N{w}_{\HoO},
$$
where here we have used $\gamma$ to denote explicitly the trace operator on $\Gamma$.
Similarly
$$
\N{\nT w}_\LtG \lesssim \N{\gamma(\nabla w)}_\LtG \lesssim 
k^{-1/2} \N{w}_{H^2(\Omega)}+ k^{1/2} \N{w}_{\HoO}.
$$
Summing all terms in the definition of the norms $\N{\cdot}_{V_1}$ and $\N{\cdot}_{V_2}$ we obtain the assertion.
\end{proof}

The next lemma derives a priori error bounds by combining:
(i) the quasi-optimality properties of the formulations from Lemma~\ref{lem:QuasiOpt},
(ii) the scaling properties of the solutions from Assumption~\ref{ass:1},
(iii) the scaling properties of the norms on $V$ from Lemma~\ref{lem:VboundedBySobolev},
and (iv) the approximation properties of the discrete space from \cite{BeBuRiSa:11}.

\begin{proposition}[Bound on the relative $H^1$-error]\label{lem:H1error}
Assume that $u$ satisfies Assumption \ref{ass:1}, $d\le2$, and that the discrete space $V_h$ is as in \S\ref{sec:Vn}.

(i) If the MS formulation with $A=1/3$ is solved using the Galerkin method, then, given $k_0>0$, 
\begin{equation}\label{eq:3new1}
\N{u-u_N}_{\HokO} \leq \N{u-u_N}_{V_1} \lesssim h^2 k^{7/2} (1+k^2h^2) \N{u}_{\HokO} \qquad\tfa k\ge k_0,
\end{equation}
i.e., the goal \eqref{eq:rel_error} is achieved if $h^2k^{7/2}$ is sufficiently small, i.e.~the method is $hk^{7/4}$-accurate (in the sense of Definition \ref{def:hka}).

(ii) If \emph{either} the least-squares formulation \emph{or} the MS formulation with $A=k^2$ are solved using the Galerkin method, then, given $k_0>0$, 
\begin{equation}\label{eq:3new2}
\N{u-u_N}_{\HokO}\leq \N{u-u_N}_{V_2}\lesssim h^2 k^3 (1+k^2h^2) \N{u}_{\HokO}\qquad \tfa k\ge k_0,
\end{equation}
i.e., the goal \eqref{eq:rel_error} is achieved if $h^2k^3$ is sufficiently small, i.e.~the method is $hk^{3/2}$-accurate (in the sense of Definition \ref{def:hka}).
\end{proposition}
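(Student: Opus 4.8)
The plan is to assemble the four ingredients flagged just before the statement. I would take the Hermite/$\mathbb Q^3$ interpolant $I_h u\in V_N$ of $u$ (well defined since $d\le2$ guarantees the Sobolev embeddings making point values of $u$ and $\nabla u$ meaningful) and use it as the competitor in the best-approximation term. Then Lemma~\ref{lem:QuasiOpt} gives $\N{u-u_N}_{V_1}\lesssim k\,\N{u-I_h u}_{V_1}$ in case (i) and $\N{u-u_N}_{V_2}\lesssim \N{u-I_h u}_{V_2}$ in case (ii). The two leftmost inequalities in \eqref{eq:3new1}--\eqref{eq:3new2} (i.e.\ $\N{\cdot}_\HokO\le\N{\cdot}_{V_1}$ and $\N{\cdot}_\HokO\le\N{\cdot}_{V_2}$) are immediate, since $\N{\cdot}_\HokO^2$ is a subset of the nonnegative terms defining each of $\N{\cdot}_{V_1}^2$ and $\N{\cdot}_{V_2}^2$.

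Rather than estimate the $\Delta$/$\cL$ and boundary contributions to $\N{u-I_h u}_{V_1}$ and $\N{u-I_h u}_{V_2}$ directly, I would apply Lemma~\ref{lem:VboundedBySobolev} to $w=u-I_h u$: this is legitimate because $u\in H^4(\Omega)\cap V$ and $I_h u\in V_N\subset V\cap H^2(\Omega)$, so $w\in V\cap H^2(\Omega)$. This reduces everything to the Sobolev approximation errors $\N{u-I_h u}_\HokO$ and $\N{u-I_h u}_{H^2(\Omega)}$. For these I would invoke the $C^1$-interpolation estimate of \cite{BeBuRiSa:11} with the full available regularity $s=p+1=4$, namely $\N{u-I_h u}_{H^j(\Omega)}\lesssim h^{4-j}\N{u}_{H^4(\Omega)}$ for $0\le j\le4$. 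This yields $\N{u-I_h u}_{H^2(\Omega)}\lesssim h^2\N{u}_{H^4(\Omega)}$ and, splitting the $\HokO$ norm into its gradient ($j=1$) and $L^2$ ($j=0$) parts, $\N{u-I_h u}_\HokO\lesssim (h^3+kh^4)\N{u}_{H^4(\Omega)}=h^3(1+hk)\N{u}_{H^4(\Omega)}$.

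The one non-routine bookkeeping point is how the factor $(1+k^2h^2)$ appears; it is produced by the $L^2$-part of the $\HokO$-error. In case (i) the combination works out as
\begin{align*}
\N{u-I_h u}_{V_1}
&\lesssim k^{1/2}\N{u-I_h u}_\HokO + k^{-1/2}\N{u-I_h u}_{H^2(\Omega)}\\
&\lesssim k^{1/2}h^3(1+hk)\N{u}_{H^4(\Omega)} + k^{-1/2}h^2\N{u}_{H^4(\Omega)}\\
&= h^2 k^{-1/2}\big(1+hk+h^2k^2\big)\N{u}_{H^4(\Omega)}
\lesssim h^2 k^{-1/2}(1+h^2k^2)\N{u}_{H^4(\Omega)},
\end{align*}
using $hk\lesssim 1+h^2k^2$; the analogous computation with $\N{u-I_h u}_{V_2}\lesssim k\N{u-I_h u}_\HokO+\N{u-I_h u}_{H^2(\Omega)}$ gives $\N{u-I_h u}_{V_2}\lesssim h^2(1+h^2k^2)\N{u}_{H^4(\Omega)}$. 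Finally I would convert $\N{u}_{H^4(\Omega)}$ via Assumption~\ref{ass:1}, iterated over $m=3,2,1$, as $\N{u}_{H^4(\Omega)}\lesssim k^3\N{u}_{H^1(\Omega)}\lesssim k^3\N{u}_\HokO$ (the last step for $k\ge k_0$). Multiplying by the quasi-optimality factors ($k$ in case (i), $1$ in case (ii)) produces exactly $h^2k^{7/2}(1+k^2h^2)\N{u}_\HokO$ and $h^2k^3(1+k^2h^2)\N{u}_\HokO$, and reading off when $h^2k^{7/2}$ (resp.\ $h^2k^3$) is small gives the $hk^{7/4}$- (resp.\ $hk^{3/2}$-) accuracy claims.

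The only genuinely delicate step is the approximation input: I must ensure that \cite{BeBuRiSa:11} supplies the scale-invariant bound $\N{u-I_h u}_{H^j(\Omega)}\lesssim h^{s-j}\N{u}_{H^s(\Omega)}$ for the Hermite/$\mathbb Q^3$ interpolant on the uniform meshes of \S\ref{sec:Vn} for all $0\le j\le s\le4$, and that this interpolant is genuinely $V$-conforming, so that it is both an admissible competitor in Lemma~\ref{lem:QuasiOpt} and an admissible argument for Lemma~\ref{lem:VboundedBySobolev}. Everything else is algebra; the decisive modelling input is Assumption~\ref{ass:1}, through which the full $h^2$ approximation power is ``paid for'' by the three powers of $k$ needed to climb from $\N{u}_\HokO$ up to $\N{u}_{H^4(\Omega)}$.
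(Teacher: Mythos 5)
Your proposal is correct and follows essentially the same route as the paper's proof: quasi-optimality from Lemma~\ref{lem:QuasiOpt}, reduction of the $V_1$/$V_2$ best-approximation error to Sobolev norms via Lemma~\ref{lem:VboundedBySobolev}, the $h^{4-m}$ approximation rates from \cite{BeBuRiSa:11}, and Assumption~\ref{ass:1} to trade $\N{u}_{H^4(\Omega)}$ for $k^3\N{u}_{\HokO}$, with the same bookkeeping producing $h^2k^{5/2}(1+hk+h^2k^2)$ and $h^2k^3(1+hk+h^2k^2)$ before the quasi-optimality factor. The only (immaterial) difference is that the paper takes as competitor the projector $\Pi^K_{\mathcal V}$ of \cite[p.~301]{BeBuRiSa:11} rather than the nodal Hermite/$\mathbb{Q}^3$ interpolant, which sidesteps the need to justify point values of $u$ and $\nabla u$.
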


\begin{proof}[Proof of Proposition \ref{lem:H1error}]
We describe the proof for $d=2$, the one-dimensional case follows along the same lines \cite[\S5]{BeBuRiSa:11}.
Let $\Pi:V\to V_h$ denote the projection operator defined as $\Pi_{\mathcal V}^K$ in \cite[page~301]{BeBuRiSa:11}.

Then, by \cite[Theorem~6]{BeBuRiSa:11} (where, in their notation, $k_1=k_2=k_*=k^*=2$ is given by the $C^1$ continuity of our spline basis, $p=3$ is the polynomial degree in each direction, and $\sigma=4$ is implied by $k_1+k_2\le \sigma\le p+1$), 
\begin{equation*}
\N{v-\Pi v}_{H^m(\Omega)}\lesssim h^{4-m}|v|_{H^4(\Omega)}, \quad m=0,1,2.
\end{equation*}
Combining with Lemma \ref{lem:VboundedBySobolev} and the oscillatory behaviour assumption \eqref{eq:1}, we find
\begin{align*}
\N{u-\Pi u}_{V_1} 
&\overset{\eqref{eq:VboundedBySobolev}}\lesssim \sum_{m=0}^2 k^{3/2-m}\N{u-\Pi u}_{H^m(\Omega)}\\
&\lesssim \sum_{m=0}^2 h^{4-m}k^{3/2-m}|u|_{H^4(\Omega)}
\nonumber\\
&\overset{\eqref{eq:1}}\lesssim \sum_{m=0}^2 h^{4-m}k^{9/2-m}\N{u}_\HokO
=h^2k^{5/2} (1+hk+h^2k^2) \N{u}_\HokO,
\nonumber
\end{align*}
and similarly
$$
\N{u-\Pi u}_{V_2} \lesssim h^2k^3 (1+hk+ h^2k^2) \N{u}_\HokO.
$$
Combining with the quasi-optimality results \eqref{eq:qo1} and \eqref{eq:qo2}, we obtain the assertion.
\end{proof}

\begin{figure}[htb]
\includegraphics[width=74mm,clip,trim = 20 0 40 0]{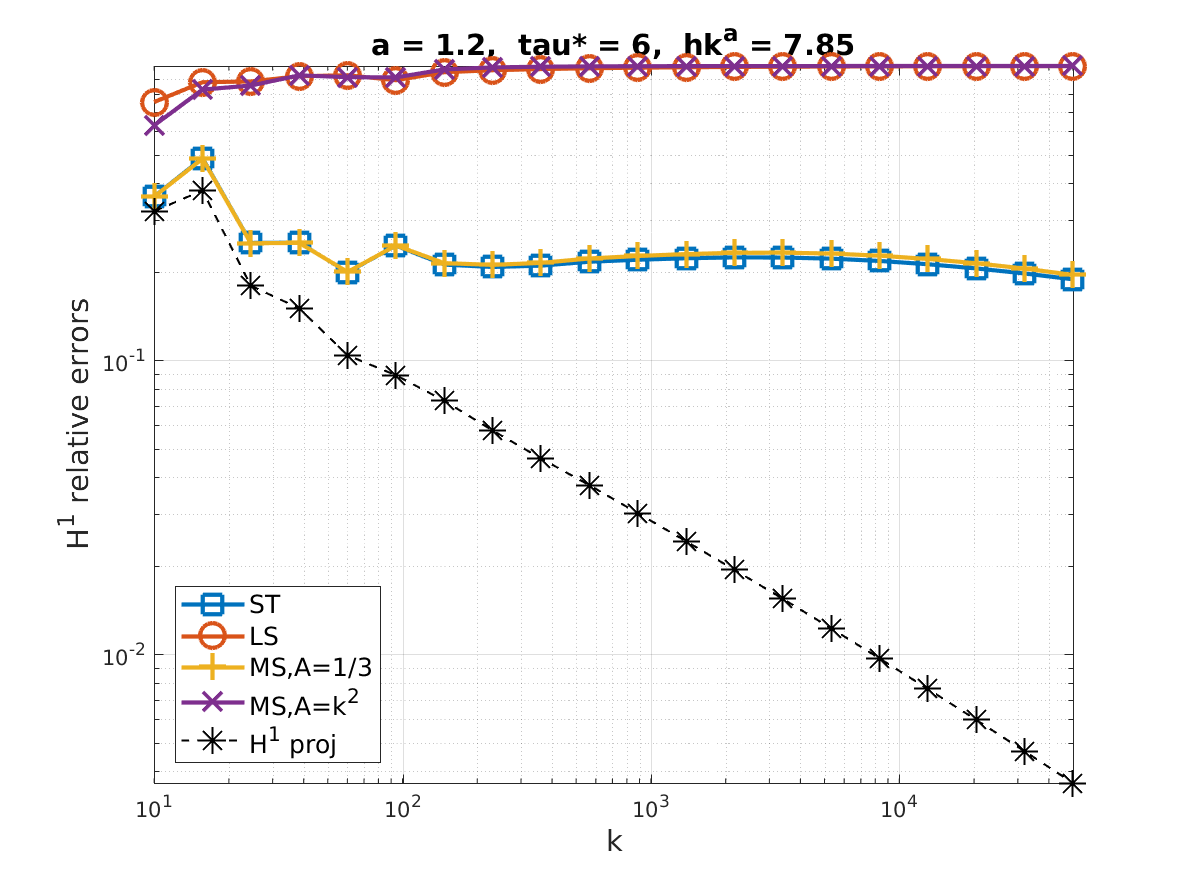}
\includegraphics[width=74mm,clip,trim = 20 0 40 0]{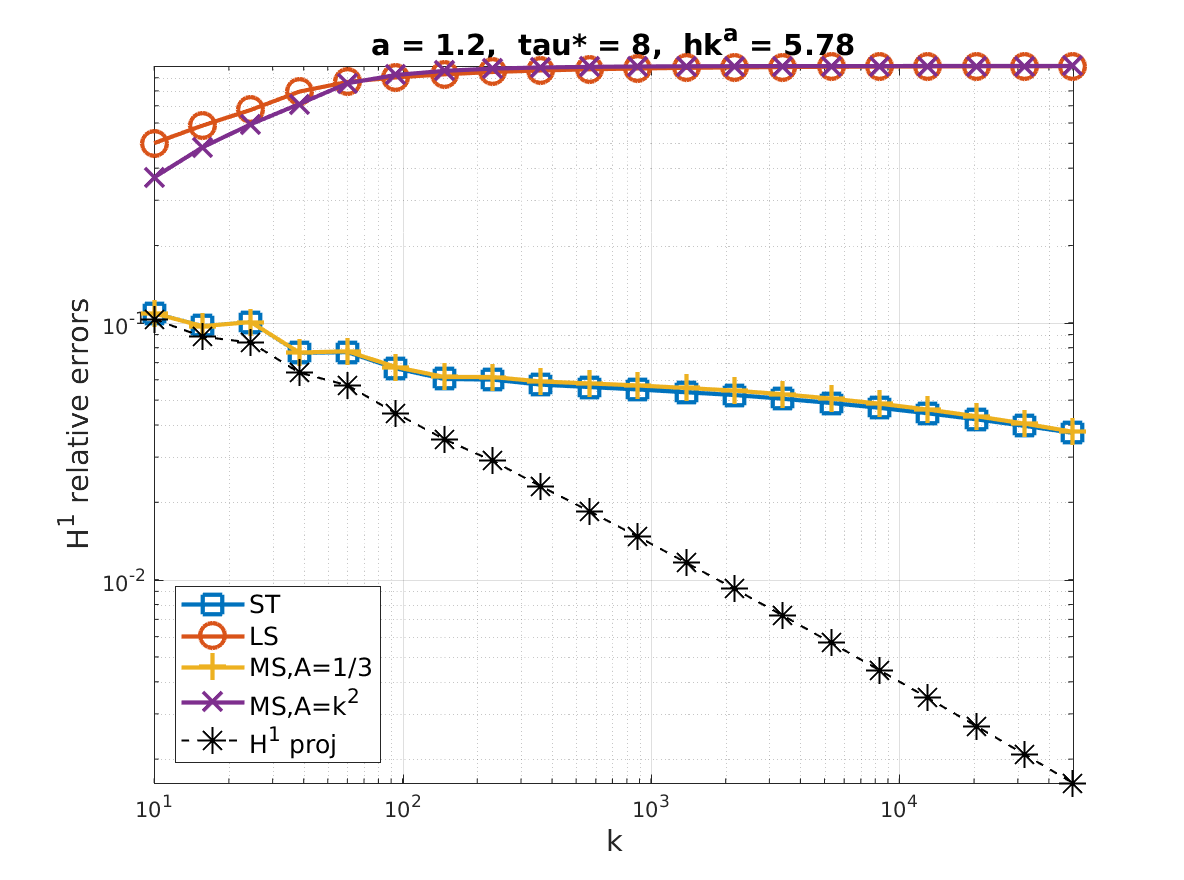}\\
\includegraphics[width=74mm,clip,trim = 20 0 40 0]{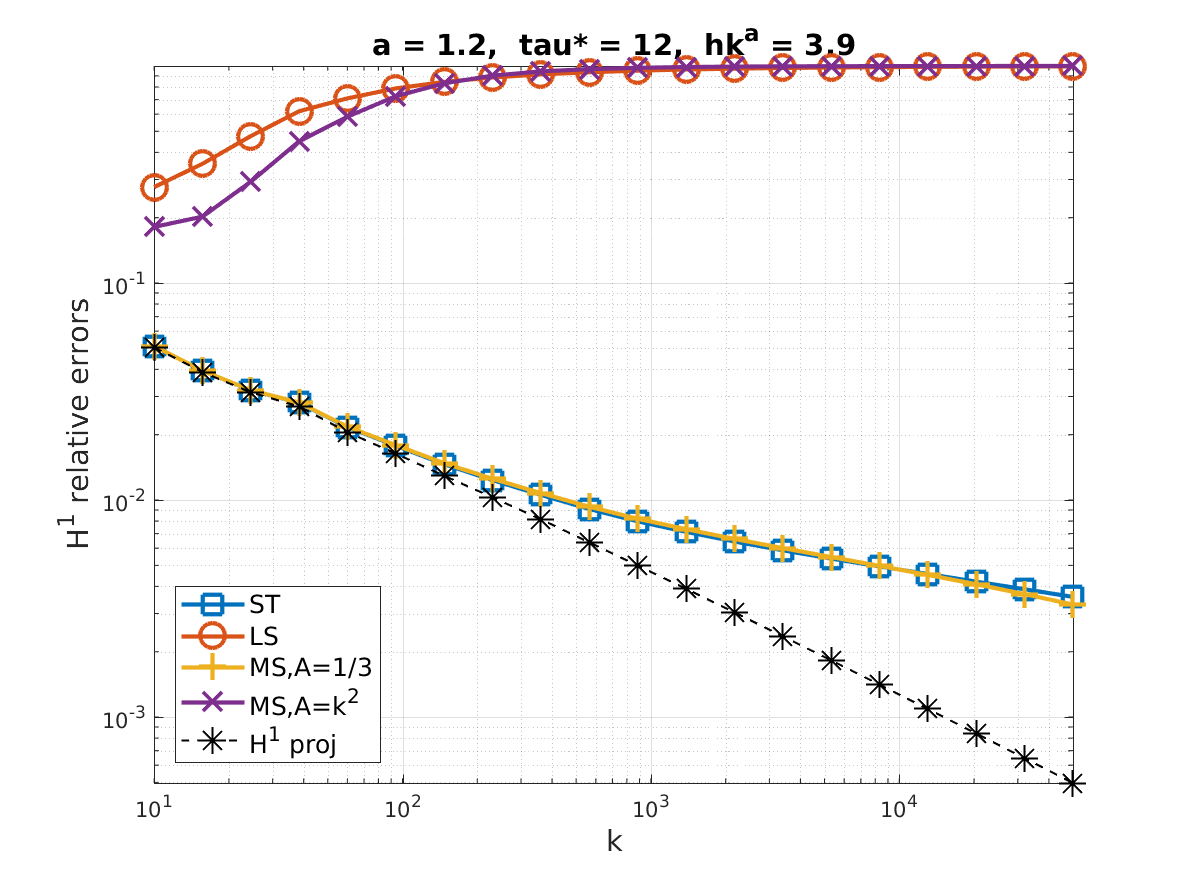}   
\includegraphics[width=74mm,clip,trim = 20 0 40 0]{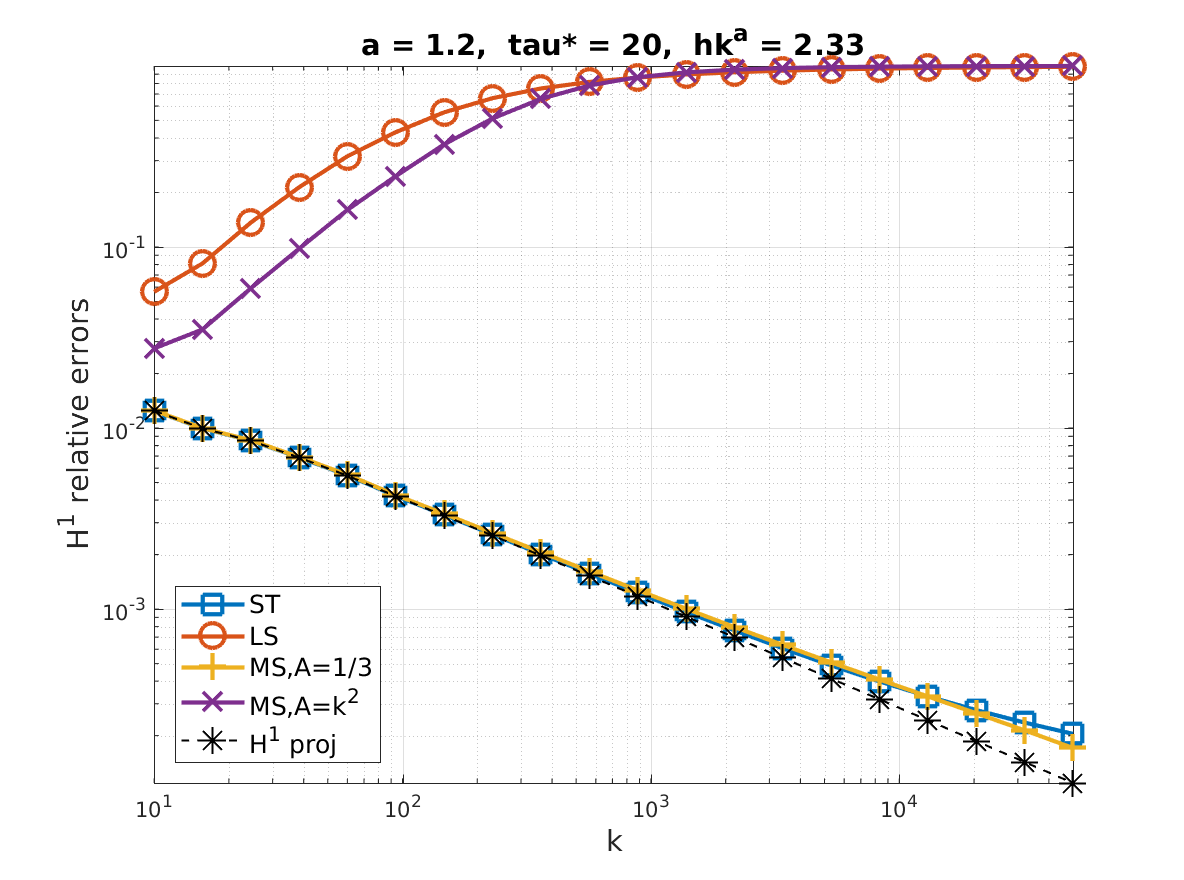} 
\caption{The relative $H^1_k(\Omega)$ errors of the standard formulation, the least squares formulation, the MS formulation with $A=1/3$ and with $A=k^2$ for problem \eqref{eq:bvp_main} in $\Omega=(0,1)$ with $u(x)=\re^{\ri k x}$.
The $H^1_k(\Omega)$ best-approximation relative error is represented by the dashed black lines with stars.
Here $k$ runs from 10 to 50\,000 and  is chosen such that $hk^{6/5}=C$ for four different values of $C$, ensuring that $\tau^*=6,8,12,20$.
\newline
The error of the MS formulation with $A=1/3$ (yellow ``+'' signs) and that of the standard formulation (blue squares) is uniformly bounded, so these appear to be $hk^{6/5}$-accurate.
The relative error of the least squares formulation (red circles) and the MS formulation with $A=k^2$ (purple crosses) quickly reach 100\% for all choices of $\tau^*$, so these appear not to be $hk^{6/5}$-accurate.}
\label{fig:Alpha1.2}
\end{figure}

\begin{figure}[htbp]
\includegraphics[width=74mm,clip,trim = 20 0 40 0]{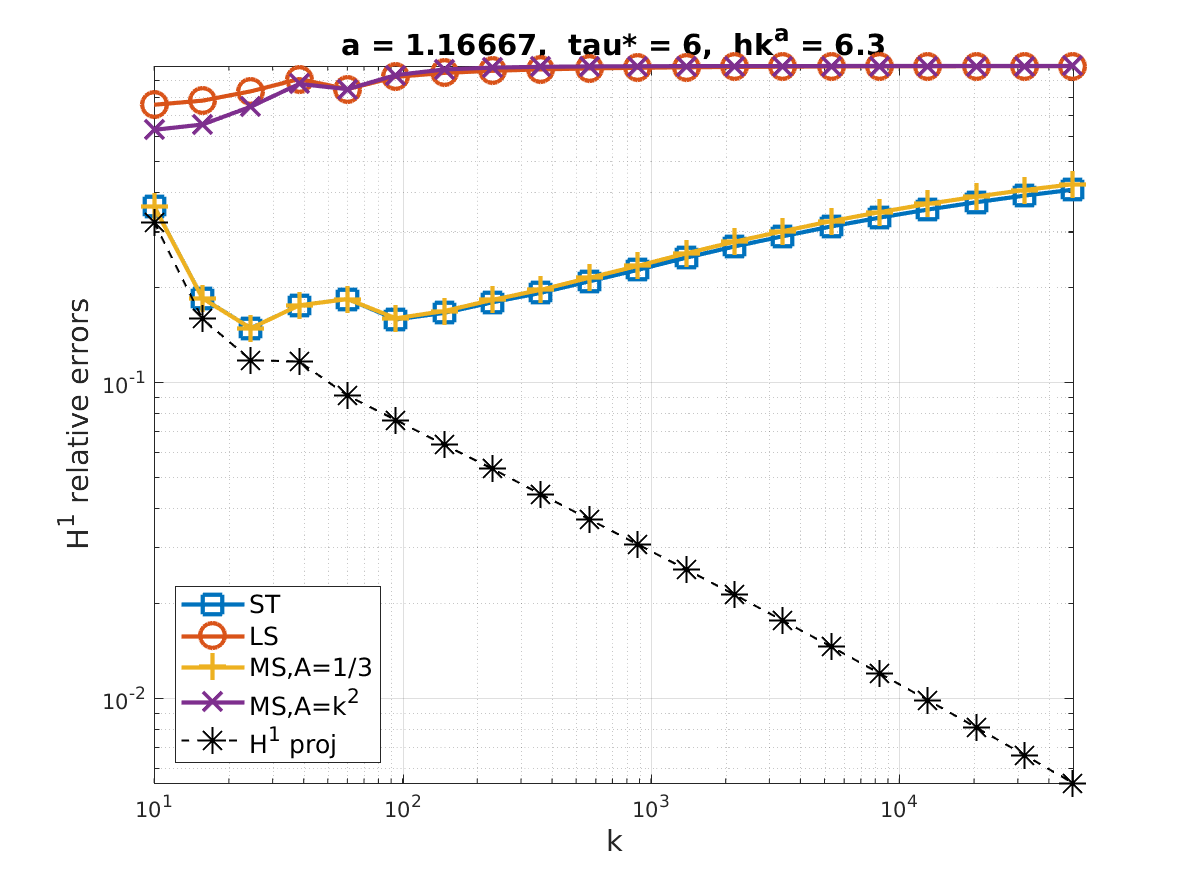}
\includegraphics[width=74mm,clip,trim = 20 0 40 0]{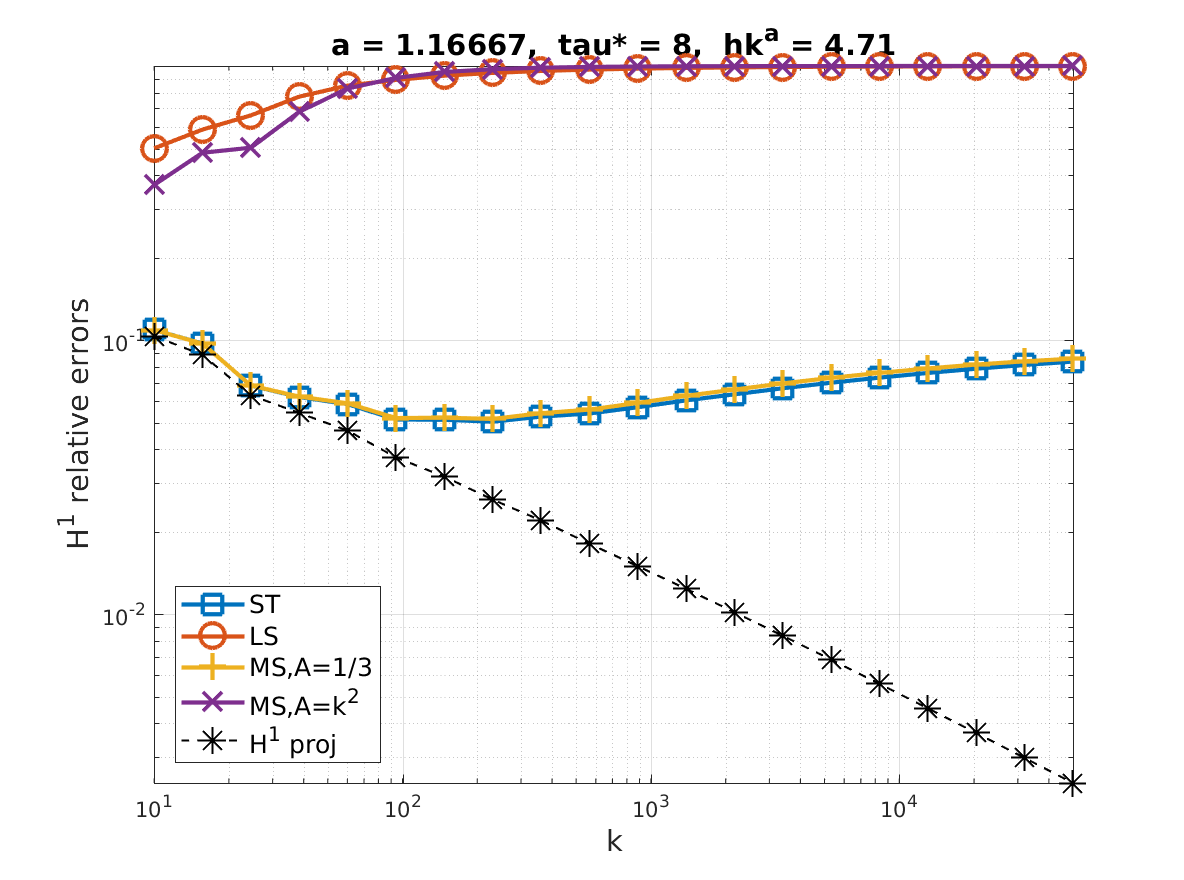}\\
\includegraphics[width=74mm,clip,trim = 20 0 40 0]{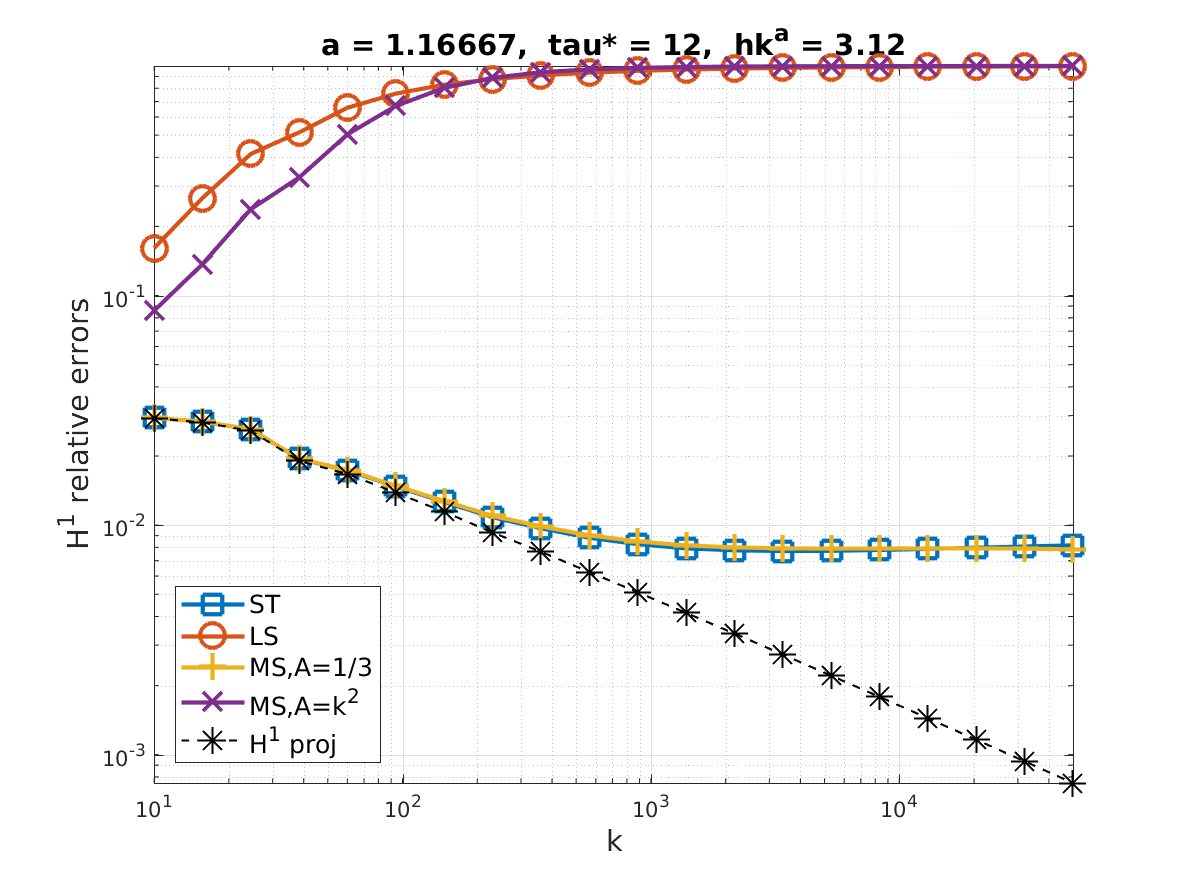}
\includegraphics[width=74mm,clip,trim = 20 0 40 0]{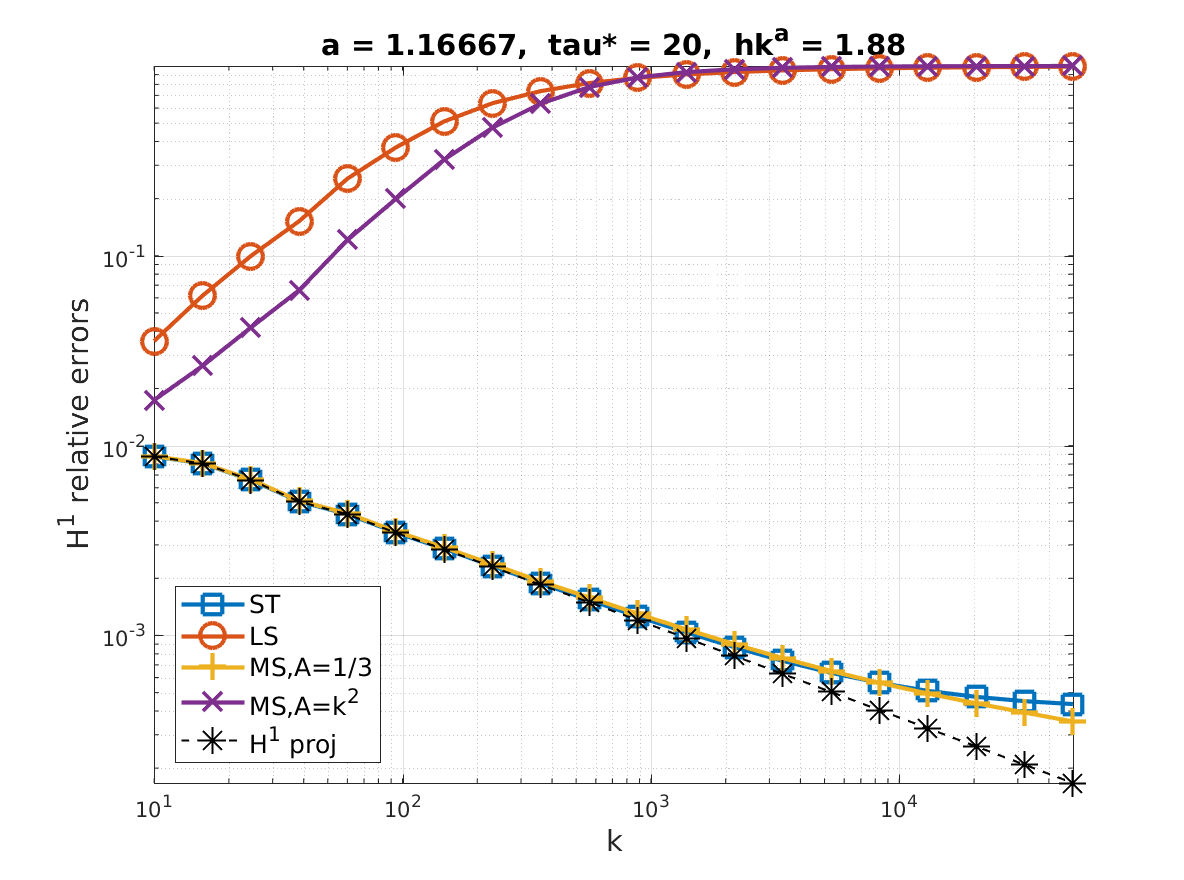}
\caption{Same as in Figure \ref{fig:Alpha1.2} with $hk^{7/6}=C$.
In this case the MS formulation with $A=1/3$ and the standard formulation appear not to be uniformly bounded in $k$.
For large values of $\tau^*$ it appears that a long preasymptotic regime is present and the relative errors grow only for large values of $k$.
}
\label{fig:Alpha7/6}
\end{figure}

\begin{figure}[htbp]
\includegraphics[width=74mm,clip,trim = 20 0 40 0]{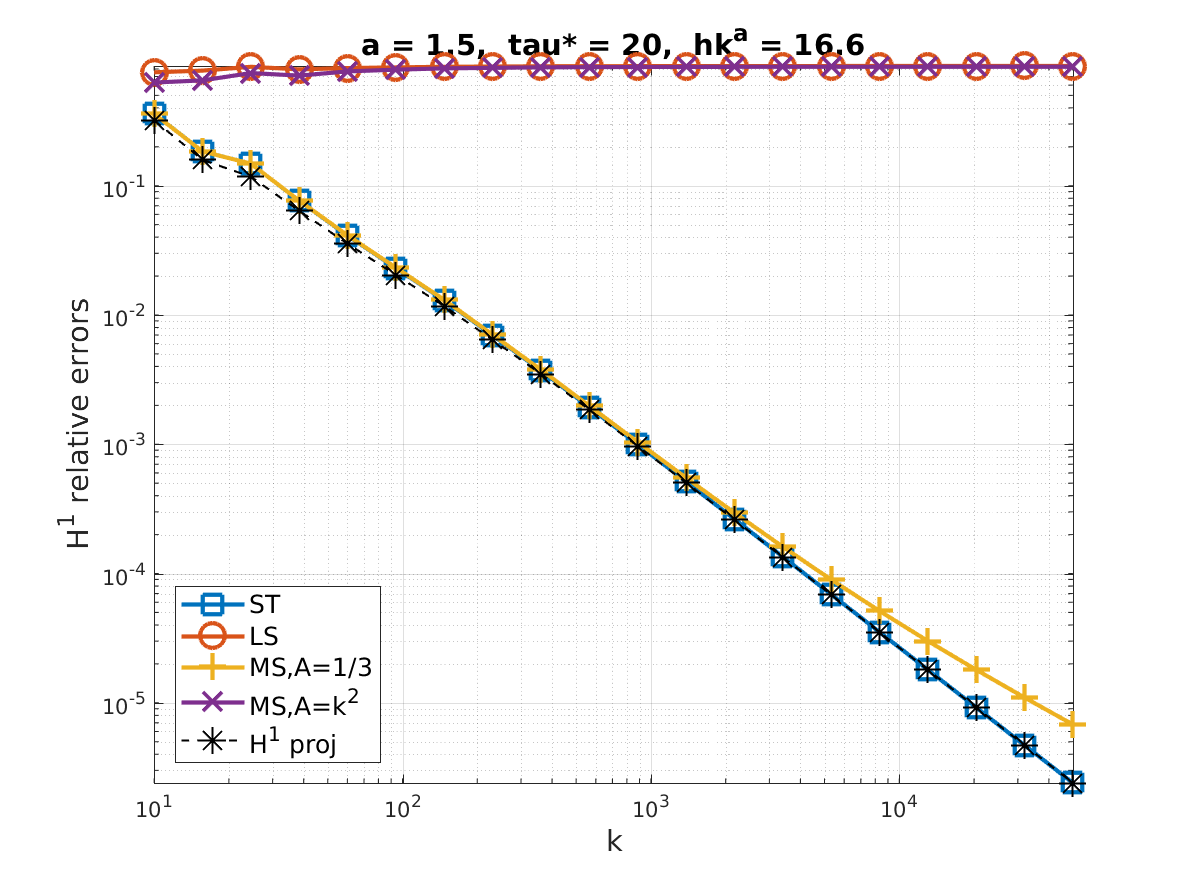} 
\includegraphics[width=74mm,clip,trim = 20 0 40 0]{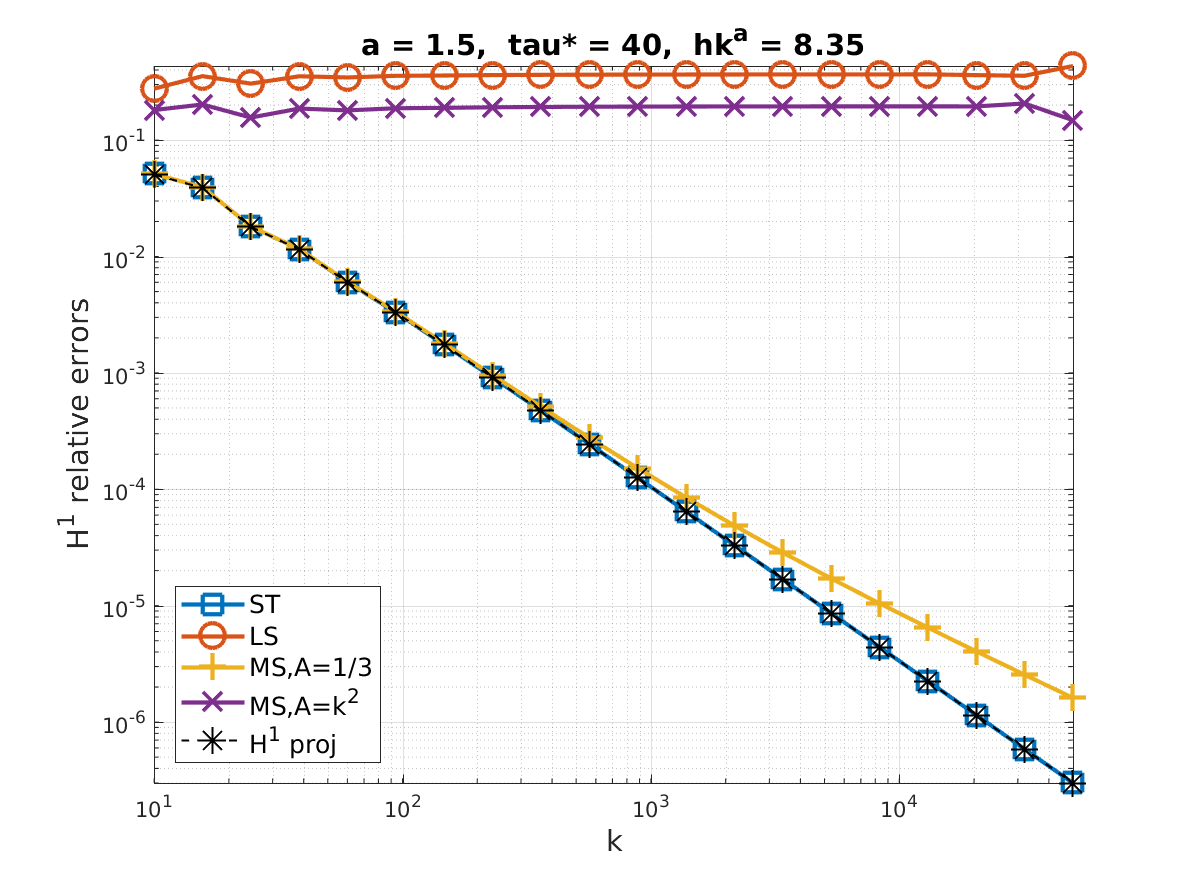}
\caption{Same as in Figure \ref{fig:Alpha1.2} with $hk^{3/2}=C$ and $\tau^*=20$ and $40$.
For $\tau^*=40$ the relative errors of least squares formulation and the MS formulation with $A=k^2$ are bounded by $0.44$ and $0.2$, respectively.
These two formulations appears to be $hk^{3/2}$-accurate.
}
\label{fig:Alpha1.5}
\end{figure}

\begin{figure}[htbp]
\includegraphics[width=74mm,clip,trim = 40 0 40 0]{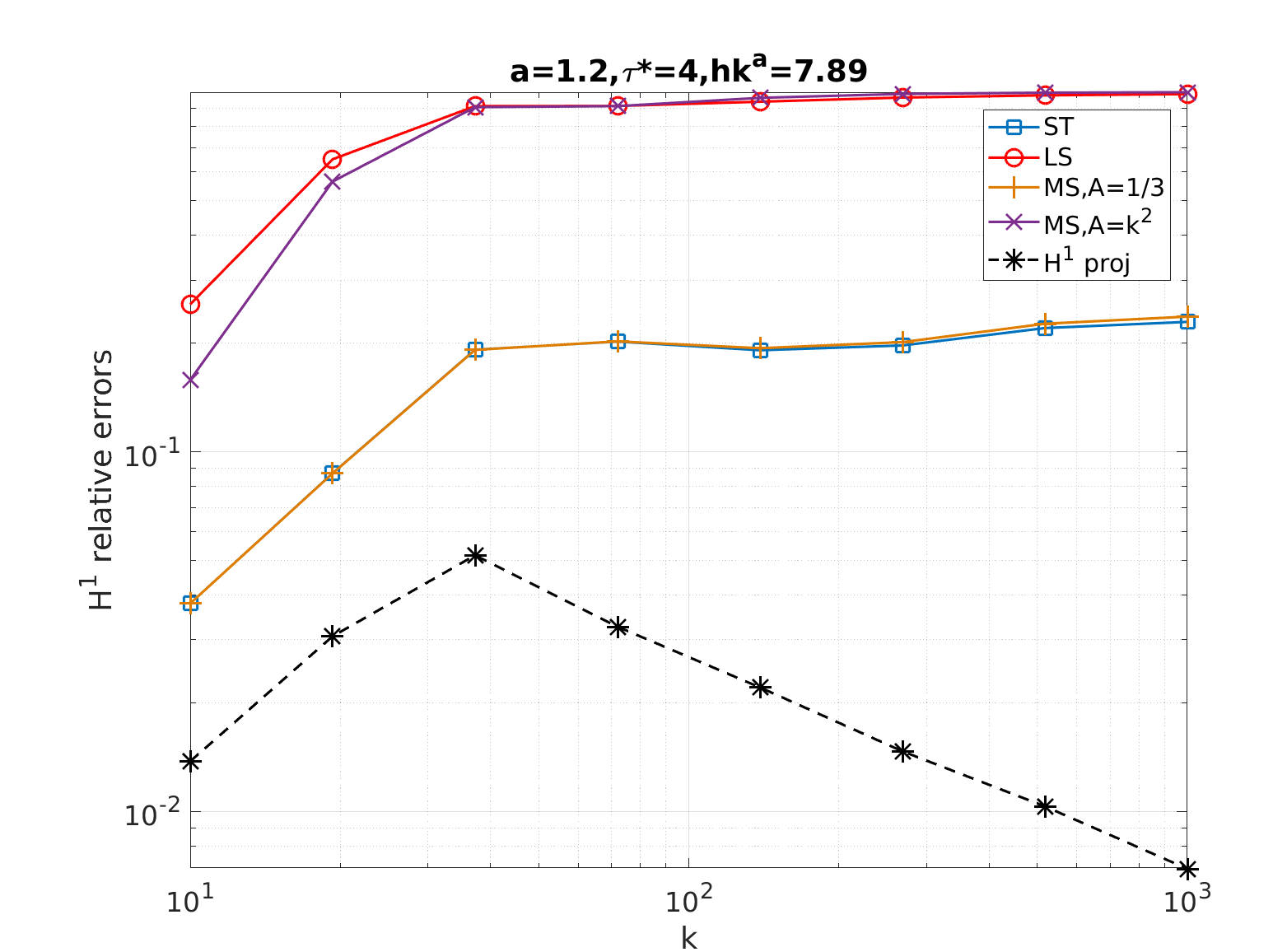}
\includegraphics[width=74mm,clip,trim = 40 0 40 0]{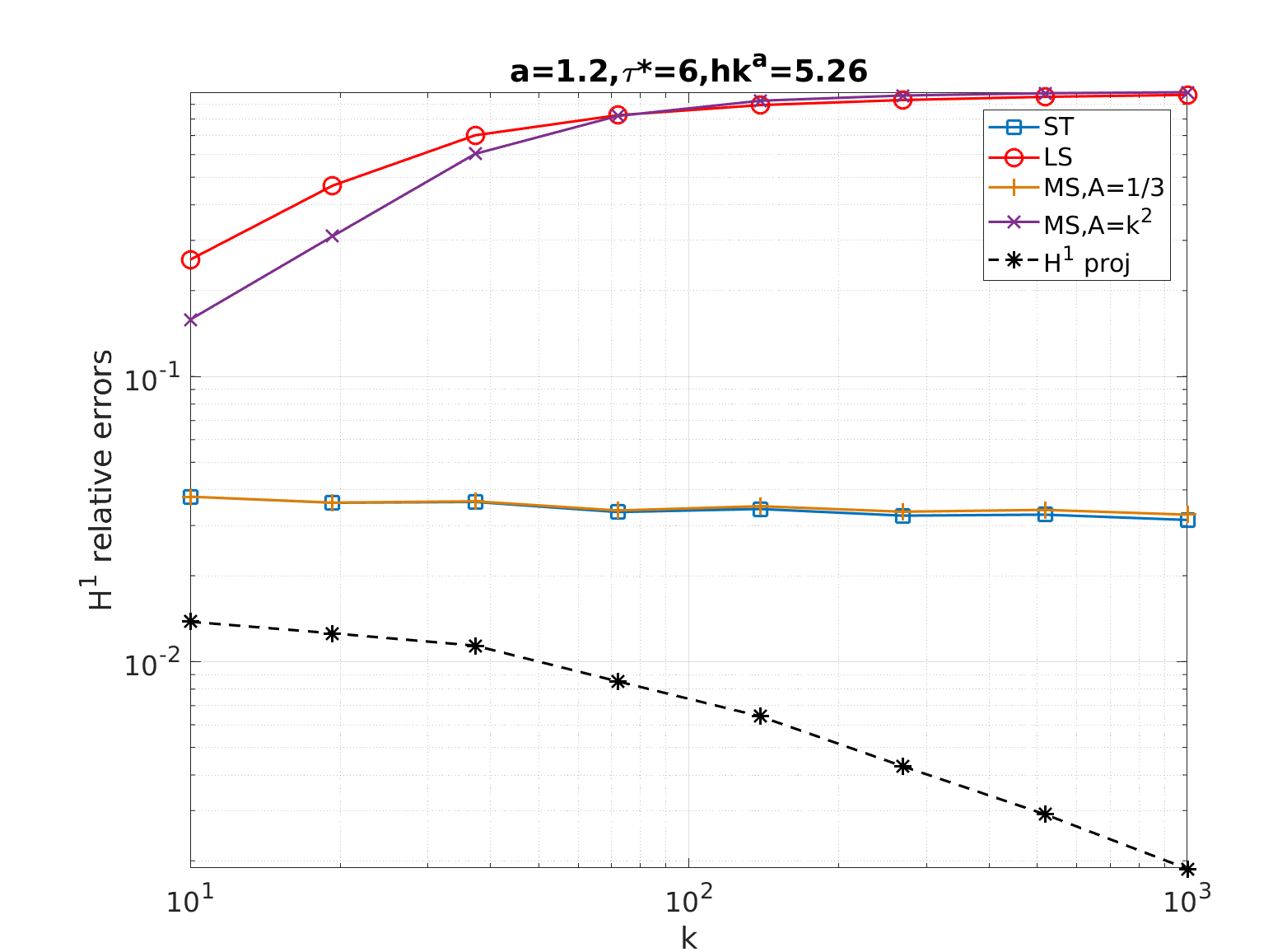}
\caption{The 2-d analogue of Figure \ref{fig:Alpha1.2} with $hk^{6/5}=C$ and $\tau^*=4$ (left) and $6$ (right). 
As in the 1-d case, the standard formulation and the MS formulation with $A=1/3$ appear to be $hk^{6/5}$-accurate.}
\label{fig:direct2-d}
\end{figure}

\subsection{Numerical experiments}\label{sec:acc_num}

We first describe experiments for the IIP \eqref{eq:bvp_main} in 1-d, with $\Omega=(0,1)$, $f=0$, and $g$ chosen so that the exact solution is $\re^{\ri k x}$.
As stated in \S\ref{sec:Vn} we discretise all the formulations with Hermite elements ($C^1$ cubics) and uniform meshes.
Figures \ref{fig:Alpha1.2}, \ref{fig:Alpha7/6} and \ref{fig:Alpha1.5} display the relative $H^1_k(\Omega)$ errors for $k$ from $k_{\min}=10$ to $k_{\max}=50\,000$ (note that the numerical experiments in \cite{IhBa:95a} go up to $k=1\,000$), where $h$ is tied to $k$ via $hk^a=C$.
The three figures correspond to $a=6/5$, $a=7/6$ and $a=3/2$ respectively, while the  subpanels correspond to different values of $C=4\pi(k_{\min}k_{\max})^{\frac{a-1}2}/\tau^*$, 
where $\tau^*$  represents approximately the number of degrees of freedom per wavelength in the central experiment of the (loglog) plot.

Our conclusions from these 1-d experiments are the following:
\begin{enumerate}
\item The standard formulation and the MS formulation with $A=1/3$ appear to be $hk^{6/5}$-accurate.
\item The least-squares formulation  and the MS formulation with $A=k^2$ appear to be $hk^{3/2}$-accurate.
\end{enumerate}

Regarding 1: 
The 1-d results of Ihlenburg and Babu\v{s}ka recapped in \S\ref{sec:accurate} imply that the $h$-FEM is $hk^a$accurate when
$a=(2p+1)/(2p)$, i.e.~when $p=3$, the $h$-FEM is $hk^{7/6}$-accurate. 
However, these results are only for $C^0$ elements and not for $C^1$ elements.
Figure \ref{fig:Alpha7/6} shows the relative errors of the standard and MS formulations growing when $hk^{7/6}=C$, for a sufficiently large value of $C$.
In Proposition \ref{lem:H1error} we could only prove that the MS formulation with $A=1/3$ is $hk^{7/4}$ accurate (although, in contrast to the results of \cite{IhBa:95, IhBa:97}, this proof holds for $d\geq 1$).  
If the linear dependence on $k$ of the quasi-optimality constant were replaced by $k^{0.4}$ (the $k$-dependence that Figure \ref{fig:QuasiOptSurface} indicates is sharp), then the MS formulation with $A=1/3$ would be provably $hk^{1.45}$-accurate.

Regarding 2:
this is in agreement with the result of Proposition \ref{lem:H1error}.

\

The 2-d analogues of Figures \ref{fig:Alpha1.2}, \ref{fig:Alpha7/6}, and \ref{fig:Alpha1.5} all exhibit the same behaviour as in 1-d (at least up to $k=1\,000$). 
We only display the 2-d analogue of Figure \ref{fig:Alpha1.2}, Figure \ref{fig:direct2-d}, where we see that, just as in 1-d, the standard formulation and the MS formulation with $A=1/3$ appear to be $hk^{6/5}$-accurate. For all these 2-d experiments, we choose $\Omega=(0,1)^2$, $f=0$, and $g$ so that the exact solution is $\re^{\ri k x}$.

\section{Iterative solution}\label{sec:test_fast}

We first recap the GMRES theory based on the field of values (\S\ref{sec:GMRES}), apply it to the MS and LS formulations (\S\ref{sec:GMRES_theorems}), and give some numerical experiments (\S\ref{sec:fast_num}).

\subsection{Recap of GMRES convergence theory}\label{sec:GMRES}

We now recap the GMRES convergence theory based on the field of values/numerical range, originally due to Elman \cite{El:82} and improved by Beckermann, Goreinov, and Tyrtyshnikov \cite{BeGoTy:06}. We give this theory for weighted GMRES; the theory for the standard, unweighted GMRES follows by setting the weight matrix ($\matrixD$ below) equal to the identity.

We consider the abstract  linear system 
 \begin{equation*}
\matrixC \bfx = \bfd 
\end{equation*}
in $\mathbb{C}^n$, where $\matrixC$ is an  $n\times n$ nonsingular complex matrix.   
Given an initial guess $\bfx^0$, we introduce the residual $\bfr^0 = \bfd- \matrixC \bfx^0$ and 
the usual Krylov spaces:  
$$  \cK^m(\matrixC, \bfr^0) := \mathrm{span}\{\matrixC^j \bfr^0 : j = 0, \ldots, m-1\}  .$$
Let $\langle \cdot , \cdot \rangle_{\matrixD}$ denote the inner product on $\C^n$ 
induced by some Hermitian positive-definite  matrix $D$, i.e.  
\begin{equation*}
\langle \bV, \bW\rangle_{\matrixD} := \bW^* \matrixD \bV
\end{equation*} 
with induced norm $\Vert \cdot \Vert_{\matrixD}$, where $^*$ denotes Hermitian transpose. For $m \geq 1$, define   $\bfx^m$  to be  the unique element of $\cK^m$ satisfying  the   
 minimal residual  property: 
$$  \Vert \bfr^m \Vert_{\matrixD} := \Vert \bfd - \matrixC \bfx^m \Vert_{\matrixD} =  \min_{\bfx \in \cK^m(\matrixC, \br^0)} \Vert {\bfd} - {\matrixC} {\bfx} \Vert_{\matrixD}. $$
When $\matrixD = \matrixI$ this is just the usual GMRES algorithm, and we use $\Vert \cdot \Vert$ to denote  $\Vert \cdot \Vert_I$, but for  more general  $\matrixD$ it 
is the weighted GMRES method \cite{Es:98} in which case  
its implementation requires the application of the weighted Arnoldi process \cite{GuPe:14}.

The following theorem is a simple generalisation to the weighted setting of the GMRES convergence result of Beckermann, 
Goreinov, and Tyrtyshnikov \cite{BeGoTy:06}. This result is an improvement of the so-called ``Elman estimate'', originally due to Elman \cite{El:82}; see also \cite{EiElSc:83}, \cite[Theorem 3.2]{St:97}, \cite[Corollary 6.2]{EiEr:01}, \cite{LiTi:12}, and the review \cite[\S6]{SiSz:07}.

\begin{theorem}[Elman-type estimate for weighted GMRES] \label{thm:Elman}  
Let $\matrixC$ be a matrix with $0\notin W_{\matrixD}(\matrixC)$, where 
$$
W_{\matrixD}(\matrixC):= \big\{ \langle \matrixC \bv, \bv\rangle_{\matrixD} : \bv \in \Com^N, \|\bv\|_{\matrixD}=1\big\}
$$
is the \emph{field of values}, also called the \emph{numerical range} of $\matrixC$ with respect to the inner product $\langle\cdot,\cdot\rangle_{\matrixD}$. 
Let $\sigma\in [0,\pi/2)$ be defined such that
\begin{equation}\label{eq:cosbeta}
\cos \sigma = \frac{\mathrm{dist}\big(0, W_{\matrixD}(\matrixC)\big)}{\| \matrixC\|_{\matrixD}},
\end{equation}
let $\gamma_\sigma$ be defined by 
$$
\gamma_\sigma:= 2 \sin \left( \frac{\sigma}{4-2\sigma/\pi}\right),
$$
and let $\br_m$ be defined as above.
Then
\begin{equation}\label{eq:Elman2}
\frac{\|\br_m\|_{\matrixD}}{\|\br_0\|_{\matrixD}} \leq \left(2 + \frac{2}{\sqrt{3}}\right)\big(2+ \gamma_\sigma\big) \,\gamma_\sigma^m.
\end{equation}
\end{theorem}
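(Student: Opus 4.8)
The plan is to reduce the weighted method to ordinary GMRES by a single similarity transformation and then quote the unweighted estimate of \cite{BeGoTy:06} verbatim. Since $\matrixD$ is Hermitian positive-definite it possesses a Hermitian positive-definite square root $\matrixD^{1/2}$, and the identity $\langle\bv,\bw\rangle_{\matrixD}=\langle\matrixD^{1/2}\bv,\matrixD^{1/2}\bw\rangle_2$ shows that $\bv\mapsto\matrixD^{1/2}\bv$ is an isometry from $(\Com^n,\langle\cdot,\cdot\rangle_{\matrixD})$ onto $(\Com^n,\langle\cdot,\cdot\rangle_2)$. I would therefore introduce the conjugated matrix $\widetilde{\matrixC}:=\matrixD^{1/2}\matrixC\matrixD^{-1/2}$ and carry the whole argument through this change of variables.

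First I would verify that the two scalars appearing in \eqref{eq:cosbeta} are invariant. Putting $\tbfu:=\matrixD^{1/2}\bv$ gives $\langle\matrixC\bv,\bv\rangle_{\matrixD}=\langle\widetilde{\matrixC}\tbfu,\tbfu\rangle_2$ while $\|\bv\|_{\matrixD}=\|\tbfu\|_2$, so $W_{\matrixD}(\matrixC)=W(\widetilde{\matrixC})$, the Euclidean numerical range of $\widetilde{\matrixC}$; in particular $0\notin W(\widetilde{\matrixC})$ and the two distances to the origin coincide. The same substitution in the definition of the norm gives $\|\matrixC\|_{\matrixD}=\|\widetilde{\matrixC}\|_2$. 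Consequently the angle $\sigma$, and hence $\gamma_\sigma$, are the same whether read off from $\matrixC$ in the $\matrixD$-geometry or from $\widetilde{\matrixC}$ in the Euclidean geometry.

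Next I would show that the weighted residual ratio coincides with the unweighted one for $\widetilde{\matrixC}$. The minimal-residual property expresses $\br_m$ through the polynomial minimisation
\begin{equation*}
\|\br_m\|_{\matrixD}=\min_{\substack{q\in\Pi_m\\ q(0)=1}}\|q(\matrixC)\br_0\|_{\matrixD},
\end{equation*}
$\Pi_m$ being the polynomials of degree at most $m$. Since $\widetilde{\matrixC}^{\,j}=\matrixD^{1/2}\matrixC^{j}\matrixD^{-1/2}$ for all $j$, we have $q(\widetilde{\matrixC})=\matrixD^{1/2}q(\matrixC)\matrixD^{-1/2}$ and therefore $\|q(\matrixC)\br_0\|_{\matrixD}=\|q(\widetilde{\matrixC})\widetilde{\br}_0\|_2$ with $\widetilde{\br}_0:=\matrixD^{1/2}\br_0$. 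Dividing by $\|\br_0\|_{\matrixD}=\|\widetilde{\br}_0\|_2$ gives
\begin{equation*}
\frac{\|\br_m\|_{\matrixD}}{\|\br_0\|_{\matrixD}}=\frac{1}{\|\widetilde{\br}_0\|_2}\min_{\substack{q\in\Pi_m\\ q(0)=1}}\|q(\widetilde{\matrixC})\widetilde{\br}_0\|_2,
\end{equation*}
which is exactly the residual reduction of standard GMRES applied to $\widetilde{\matrixC}$ with initial residual $\widetilde{\br}_0$.

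Finally I would apply the unweighted theorem of \cite{BeGoTy:06} to $\widetilde{\matrixC}$: because $0\notin W(\widetilde{\matrixC})$, its $m$-step GMRES residual is bounded by $(2+2/\sqrt3)(2+\gamma_\sigma)\gamma_\sigma^m$ with the same $\sigma,\gamma_\sigma$, and combining this with the previous two paragraphs yields \eqref{eq:Elman2}. I expect the only real difficulty to lie inside this imported estimate rather than in the reduction: it rests on a bound of the form $\|q(\widetilde{\matrixC})\|_2\le C\sup_{z\in W(\widetilde{\matrixC})}|q(z)|$ for a universal constant $C$ (a Crouzeix-type inequality), together with the construction of a near-optimal polynomial $q$ on a sector $\{|\arg z|\le\sigma\}$ enclosing $W(\widetilde{\matrixC})$ and the estimation of its size via a conformal map. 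Since all of this is supplied by \cite{BeGoTy:06}, the work specific to the weighted setting is just the verification that every ingredient---field of values, operator norm, and the polynomial residual functional---is invariant under conjugation by $\matrixD^{1/2}$.
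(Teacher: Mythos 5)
Your proposal is correct. The paper itself offers no self-contained proof of Theorem \ref{thm:Elman}: it points to \cite[Theorem 5.3]{BoDoGrSpTo:17}, which obtains the weighted estimate by combining the unweighted result of \cite[Theorem 2.1]{BeGoTy:06} with the weighted field-of-values framework of \cite[Theorem 5.1]{GrSpVa:17}, i.e.\ by running the argument directly in the $\langle\cdot,\cdot\rangle_{\matrixD}$ inner product. Your route is the natural alternative: conjugate by $\matrixD^{1/2}$ and verify that every quantity entering the estimate --- the numerical range, the operator norm, and the residual-polynomial functional $\min_{q(0)=1}\|q(\cdot)\br_0\|$ --- is invariant under the isometry $\bv\mapsto\matrixD^{1/2}\bv$, so that the unweighted theorem applied to $\widetilde{\matrixC}=\matrixD^{1/2}\matrixC\matrixD^{-1/2}$ yields \eqref{eq:Elman2} verbatim. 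The two approaches are mathematically equivalent (a finite-dimensional space with Hermitian positive-definite weight is isometric to the Euclidean one), and yours has the advantage of importing the unweighted result as a black box rather than re-deriving it; the cited route has the practical advantage that it never forms $\matrixD^{1/2}$, which matters for implementation but not for the bound. Your identities $W_{\matrixD}(\matrixC)=W(\widetilde{\matrixC})$, $\|\matrixC\|_{\matrixD}=\|\widetilde{\matrixC}\|_2$ and $\|q(\matrixC)\br_0\|_{\matrixD}=\|q(\widetilde{\matrixC})\matrixD^{1/2}\br_0\|_2$ are all correct, and the polynomial characterisation already encodes the correspondence $\matrixD^{1/2}\cK^m(\matrixC,\br^0)=\cK^m(\widetilde{\matrixC},\matrixD^{1/2}\br^0)$ of the minimisation sets. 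The one point to check when quoting \cite{BeGoTy:06} is the exact normalisation in the definition of $\sigma$: if their denominator is the numerical radius $\sup\{|z|:z\in W(\widetilde{\matrixC})\}$ rather than $\|\widetilde{\matrixC}\|_2$, the statement \eqref{eq:cosbeta} still follows because the numerical radius is at most the norm and $\sigma\mapsto\gamma_\sigma$ is increasing on $[0,\pi/2)$.
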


\begin{proof}[References for the proof of Theorem \ref{thm:Elman}]
Theorem \ref{thm:Elman} is proved in \cite[Theorem 5.3]{BoDoGrSpTo:17} using 
\cite[Theorem 2.1]{BeGoTy:06} and \cite[Theorem 5.1]{GrSpVa:17}.
\end{proof}

We apply Theorem \ref{thm:Elman} below to a situation where $\cos\sigma \tendo$ as $k\tendi$. Since $\sigma\in [0,\pi/2)$, 
the limit $\cos\sigma\tendo$ corresponds to the limit $\sigma \rightarrow \pi/2$, and it is therefore convenient to summarise Theorem \ref{thm:Elman} applied to this setting as the following corollary.

\begin{corollary}\label{cor:Elman}
With $\matrixC$ a matrix such that $0\notin W_{\matrixD}(\matrixC)$, let $\epsilon\in (0,\pi/2]$ be defined such that
$$
\sin \epsilon = \frac{\mathrm{dist}\big(0, W_{\matrixD}(\matrixC)\big)}{\| \matrixC\|_{\matrixD}}
$$
i.e.~$\epsilon=\pi/2-\sigma$ where $\sigma$ is defined by \eqref{eq:cosbeta}.
There exists $C>0$ (independent of $\epsilon$) such that, given $0<\delta<1$, 
$$
\text{if} \quad m\geq \frac{C}{\epsilon}\log\left(\frac{12}{\delta}\right)\quad\text{ then }  \quad\frac{\|\br_m\|_{\matrixD}}{\|\br_0\|_{\matrixD}}\leq \delta.
$$
\end{corollary}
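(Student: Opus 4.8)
\textbf{Proof plan for Corollary \ref{cor:Elman}.}
The plan is to start from the conclusion of Theorem \ref{thm:Elman} and simply rewrite everything in terms of $\epsilon=\pi/2-\sigma$, then control the resulting bound in the regime $\epsilon\to 0$ (equivalently $\sigma\to\pi/2$). First I would substitute $\sigma=\pi/2-\epsilon$ into the quantity $\gamma_\sigma$; the key small-angle behaviour is that the argument of the sine satisfies
\begin{equation*}
\frac{\sigma}{4-2\sigma/\pi}=\frac{\pi/2-\epsilon}{3+2\epsilon/\pi}\longrightarrow \frac{\pi}{6}\quad\text{as }\epsilon\tendo,
\end{equation*}
so that $\gamma_\sigma\to 2\sin(\pi/6)=1$ from below. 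The crux is therefore to extract a \emph{quantitative} bound of the form $\gamma_\sigma\leq 1-c\epsilon$ for some absolute constant $c>0$, valid for all $\epsilon\in(0,\pi/2]$.

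To obtain this I would Taylor-expand (or bound by elementary monotonicity/convexity estimates) the map $\epsilon\mapsto 2\sin\!\big((\pi/2-\epsilon)/(3+2\epsilon/\pi)\big)$ near $\epsilon=0$. Writing $\theta(\epsilon)=(\pi/2-\epsilon)/(3+2\epsilon/\pi)$, one checks that $\theta(0)=\pi/6$ and $\theta'(0)<0$, so that the derivative of $2\sin\theta(\epsilon)$ at $0$ is strictly negative; combined with a uniform bound on the second derivative over the compact interval $[0,\pi/2]$, this yields $\gamma_\sigma\leq 1-c\epsilon$ for a fixed $c>0$. Using $\log(1-c\epsilon)\leq -c\epsilon$, the decay factor obeys $\gamma_\sigma^m\leq \re^{-c\epsilon m}$, while the two prefactors $\big(2+2/\sqrt3\big)\big(2+\gamma_\sigma\big)$ are bounded above by the absolute constant $12$ (since $\gamma_\sigma<1$ gives $2+\gamma_\sigma<3$ and $2+2/\sqrt3<4$, so the product is below $12$). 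Hence
\begin{equation*}
\frac{\|\br_m\|_{\matrixD}}{\|\br_0\|_{\matrixD}}\leq 12\,\re^{-c\epsilon m}.
\end{equation*}

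Finally I would require the right-hand side to be at most $\delta$: imposing $12\,\re^{-c\epsilon m}\leq\delta$ is equivalent to $m\geq (1/(c\epsilon))\log(12/\delta)$, which is exactly the claimed threshold with $C=1/c$ independent of $\epsilon$ (and of $k$). I expect the only real obstacle to be the quantitative linear lower bound $\gamma_\sigma\leq 1-c\epsilon$: the convergence $\gamma_\sigma\to 1$ is immediate, but turning it into a clean absolute-constant estimate uniform over the whole range $\epsilon\in(0,\pi/2]$ requires a short but careful elementary argument controlling the derivative of $\sin\theta(\epsilon)$ away from zero and bounding its curvature; everything else is bookkeeping in the change of variable and absorbing constants into $C$.
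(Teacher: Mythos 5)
Your plan is correct and is essentially the argument behind the paper's citation of \cite[Corollary 5.4]{BoDoGrSpTo:17}: substitute $\sigma=\pi/2-\epsilon$ into Theorem \ref{thm:Elman}, bound the prefactor $\bigl(2+2/\sqrt3\bigr)\bigl(2+\gamma_\sigma\bigr)$ by $12$, and extract the linear decay $\gamma_\sigma\leq 1-c\epsilon$ uniformly on $(0,\pi/2]$ (which follows since $\epsilon\mapsto\gamma_\sigma$ is strictly decreasing from $1$ to $0$, so $(1-\gamma_\sigma)/\epsilon$ extends continuously to $[0,\pi/2]$ with positive value $4/(3\sqrt3)$ at $\epsilon=0$ and hence has a positive minimum). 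Your expansion matches the coefficient $4\epsilon/(3\sqrt3)$ recorded in the paper's remark comparing with the original Elman estimate, and the final bookkeeping $12\,\re^{-c\epsilon m}\leq\delta$ gives exactly the stated threshold.
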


\noi Corollary \ref{cor:Elman} is proved in \cite[Corollary 5.4]{BoDoGrSpTo:17}, and implies that choosing $m\gtrsim \epsilon^{-1}$ is sufficient for 
the decrease of the residual to be independent of $\epsilon$ as $\epsilon\tendo$.

\begin{remark}[Comparison with the original Elman estimate]
The original bound proved by Elman (in the unweighted setting) is
\begin{equation}\label{eq:Elman}
\frac{\|\br_m\|_{\matrixD}}{\|\br_0\|_{\matrixD}} \leq \sin^m \sigma.
\end{equation}
To see that \eqref{eq:Elman2} is a stronger result, observe that, when $\sigma= \pi/2-\epsilon$, the convergence factor in  \eqref{eq:Elman} is
$$
\sin\sigma = \cos\epsilon 
= 1- \frac{\epsilon^2}{2}+\cO(\epsilon^4),
$$
which leads to requiring $m\gtrsim \epsilon^{-2}$ for GMRES to converge in an $\epsilon$-independent way as $\epsilon\tendo$. In contrast, the convergence factor in \eqref{eq:Elman2} is 
$$
\gamma_\sigma:=2 \sin \left( \frac{\sigma}{4-2\sigma/\pi}\right)= 2 \sin \left( \frac{\pi}{6} -\frac{4\epsilon}{9}+\cO(\epsilon^2)\right)= 1 -\frac{4\epsilon}{3\sqrt{3}} + \cO(\epsilon^2)\quad\tas\,\, \epsilon\tendo,
$$
leading to $m\gtrsim \epsilon^{-1}$ as stated in Corollary \ref{cor:Elman}.
\end{remark}

\subsection{The theory applied to the MS and LS formulation}\label{sec:GMRES_theorems}

We now apply the theory in \S\ref{sec:acc_num} to the MS formulation; all the results for MS with $A=k^2$ also hold for LS formulation (similar to in Lemma \ref{lem:QuasiOpt} and Proposition \ref{lem:H1error})
because the $k$-dependence of $\Ccont/\Ccoer$ is the same for the two formulations (see Lemma \ref{lem:LSCC} and Corollary \ref{cor:MS}).

In \cite[\S5.2]{MoSp:14} the implications of the original Elman estimate \eqref{eq:Elman} were explored theoretically for the MS formulation using GMRES in the standard $l^2$ inner product, and computations were done for standard GMRES in \cite{GaMo:17a} (see the discussion in \S\ref{rem:GM1}).
We highlight that conforming discretisations of the standard formulation \eqref{eq:vfH1} are not amenable to the analysis in \S\ref{sec:GMRES}, since, by Part {\em(iii)} of Lemma~\ref{lem:contcoer_ST}, 
the distance between the numerical range and the origin converges to zero when the discretisation is refined.

In this section, we explore the implications of the refined version of the Elman estimate \eqref{eq:Elman2} for weighted GMRES, where the weight matrix corresponds to the mass matrix of one of the norms $\N{\cdot}_{V_1}$ and $\N{\cdot}_{V_2}$. 
This weighted setting was inspired by the recent work on 
domain-decomposition preconditioners for the Helmholtz and Maxwell equations 
in \cite{GrSpVa:17} and \cite{BoDoGrSpTo:17}.

We first need to set up some notation for the Galerkin method applied to the MS formulation \eqref{eq:coercive}.
From now on we assume that $L=1$ in the definition of the norms \eqref{eq:normV}--\eqref{eq:Valt}.

\paragraph{Notation for the matrices involved in the Galerkin method.}

Let the real ($C^1$) basis functions be denoted by $\phi_j$.
Recall from \eqref{eq:discrete} that 
\begin{equation*}
\matrixS_{\ell,m} := \int_{\Omega} \nabla \phi_\ell \cdot \nabla\phi_m \, \rd \bx, \qquad   
\matrixM_{\ell,m} := \int_{\Omega}\phi_\ell  \,\phi_m \, \rd \bx, \qquad     
\matrixNo_{\ell,m} := \int_{\Gamma} \phi_\ell \, \phi_m  \rd s.
\end{equation*}
Define
\begin{align}
\matrixL^{(1)}_{\ell,m} :=&\; \int_{\Omega} \Delta \phi_\ell \,\Delta \phi_m, \qquad 
\matrixL^{(2)}_{\ell,m} = \int_{\Omega} (\Delta \phi_\ell+ k^2\phi_\ell) (\Delta \phi_m+ k^2\phi_m),\nonumber\\
\matrixNone_{\ell,m} :=& \;\int_{\Gamma} \nT\phi_\ell\cdot\nT  \phi_m, \qquad
\matrixNtwo_{\ell,m}= \int_{\Gamma} \partial_n\phi_\ell\,\partial_n \phi_m\nonumber\\
\label{eq:Dk1}
\matrixD_k^{(1)}:=& \;\frac{1}{k^2}\matrixL^{(1)} + \matrixS + k^2\matrixM + k^2\matrixNo+ \matrixNone+\matrixNtwo \quad \tand\\
\matrixD_k^{(2)}:=& \;\matrixL^{(2)} + \matrixS + k^2\matrixM + k^2\matrixNo+ \matrixNone+\matrixNtwo.\label{eq:Dk2}
\end{align}
With $\matrixP$ a Hermitian positive definite matrix in $\C^{N\times N}$, we denote the corresponding scalar product and norm
\begin{equation}\label{eq:Dkip}
\langle \bv,\bw\rangle_\matrixP :=\bw^* \matrixP \bv = (\matrixP \bv, \bw)_2,
\qquad 
\N{\bv}_\matrixP^2:= \langle \bv,\bv\rangle_\matrixP\qquad \bv,\bw\in\C^N,
\end{equation}
where $(\cdot,\cdot)_2$ denotes the Euclidean $l^2$ inner product and $\bw^*$ the conjugate transpose of $\bw$.
These definitions and the definitions of the norms $\N{\cdot}_{V_1}$ \eqref{eq:normV} and $\N{\cdot}_{V_2}$ \eqref{eq:Valt} imply that
\begin{equation}\label{eq:normequivDk}
\N{v_h}_{V_1} = \N{\bv}_{\matrixD_k^{(1)}}\quad\tand\quad \N{v_h}_{V_2} = \N{\bv}_{\matrixD_k^{(2)}},
\end{equation}
where $\bv$ is the coefficient vectors of $v_h\in V_N$.
Observe that $\matrixD_k^{(j)}$, $j=1,2,$ are real, symmetric, and positive definite (we have $\bv^* \matrixD_k^{(j)}\bv>0$ for all $\bv \in \Com^N\setminus\{\bze\}$ because $\N{\cdot}_{V_1}$ and $\N{\cdot}_{V_2}$ are norms).

Define
\begin{equation}\label{eq:Bg}
\matrixB^{(1)}_{i j} := b(\phi_j, \phi_i) \quad\tand\quad g^{(1)}_i = G(\phi_i),
\end{equation}
where $b(\cdot,\cdot)$ and $G(\cdot)$ are defined by \eqref{eq:B} and \eqref{eq:F} respectively and $A=1/3$.
Similarly, 
define
\begin{equation}\label{eq:Bg2}
\matrixB^{(2)}_{i j} := b(\phi_j, \phi_i) \quad\tand\quad g^{(2)}_i = G(\phi_i),
\end{equation}
where $b(\cdot,\cdot)$ and $G(\cdot)$ are defined by \eqref{eq:B} and \eqref{eq:F} respectively and $A=k^2$.
The definition of $\matrixB^{(j)}$ implies that
$$
b(u_h, v_h) = \left(\matrixB^{(j)} \bu,\bv\right)_2
$$
with $A=1/3$ when $j=1$ and $A=k^2$ when $j=2$.
The linear system arising from the Galerkin method applied to the new variational formulation is then $\matrixB^{(j)} \bu= \bg^{(j)}$.

\begin{lemma}[Continuity and coercivity in $(\cdot,\cdot)_{\matrixD_k}$ for left-preconditioned system]\label{lem:contcoer_left}\ 

\noindent
Assume that $\Omega$ is star-shaped with respect to a ball and $\beta$ satisfies both \eqref{eq:beta2} and \eqref{eq:beta1}.
Let $\Ccontj$ and $\Ccoerj$ be the continuity and coercivity constants of $b(\cdot,\cdot)$ in the norm $\N{\cdot}_{V_j}$ 
for $A=1/3$ when $j=1$ and for $A=k^2$ when $j=2$.
Then 
\begin{equation}\label{eq:cont_coer_Dk}
\!\left|\left\langle\! \left(\matrixD_k^{(j)}\right)^{-1}\!\matrixB^{(j)} \bv,\bw\right\rangle_{\matrixD_k^{(j)}}\right| \leq \Ccontj\N{\bv}_{\matrixD_k^{(j)}}\N{\bw}_{\matrixD_k^{(j)}}
\;\tand \;
\left|\left\langle\! \left(\matrixD_k^{(j)}\right)^{-1}\!\matrixB^{(j)} \bv,\bv\right\rangle_{\matrixD_k^{(j)}}\right| \geq \Ccoerj \N{\bv}^2_{\matrixD_k^{(j)}}
\end{equation}
for all $\bv,\bw\in \Com^N$.
\end{lemma}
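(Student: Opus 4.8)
The plan is to exploit the fact that conjugating the Galerkin matrix $\matrixB^{(j)}$ by the inverse of the weight matrix $\matrixD_k^{(j)}$ and then measuring in the $\matrixD_k^{(j)}$-inner product simply recovers the sesquilinear form $b(\cdot,\cdot)$ evaluated on the corresponding finite-element functions; the continuity and coercivity constants of $b(\cdot,\cdot)$ then transfer verbatim to the preconditioned matrix. Concretely, for $\bv,\bw\in\Com^N$ let $v_h,w_h\in V_N$ be the finite-element functions whose coefficient vectors are $\bv$ and $\bw$. First I would unwind the definition \eqref{eq:Dkip} of the weighted inner product: writing out
\begin{equation*}
\left\langle (\matrixD_k^{(j)})^{-1}\matrixB^{(j)}\bv,\bw\right\rangle_{\matrixD_k^{(j)}} = \bw^*\matrixD_k^{(j)}(\matrixD_k^{(j)})^{-1}\matrixB^{(j)}\bv = \bw^*\matrixB^{(j)}\bv = (\matrixB^{(j)}\bv,\bw)_2,
\end{equation*}
the two copies of $\matrixD_k^{(j)}$ cancel, and by the defining property of $\matrixB^{(j)}$ (namely $b(u_h,v_h)=(\matrixB^{(j)}\bu,\bv)_2$) this last quantity equals $b(v_h,w_h)$. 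This is the key identity, and it is the whole content of the lemma.

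With this identity in hand both bounds are immediate. For continuity I would apply Lemma \ref{lem:MScont}, which gives $|b(v_h,w_h)|\le \Ccontj\N{v_h}_{V_j}\N{w_h}_{V_j}$, and then rewrite the right-hand side using the norm equivalence \eqref{eq:normequivDk}, i.e.\ $\N{v_h}_{V_j}=\N{\bv}_{\matrixD_k^{(j)}}$ and $\N{w_h}_{V_j}=\N{\bw}_{\matrixD_k^{(j)}}$. For coercivity I would set $\bw=\bv$ so that the left-hand side of the identity becomes $b(v_h,v_h)$, bound $|b(v_h,v_h)|\ge\Re b(v_h,v_h)$, and invoke Theorem \ref{thm:MScoer} (valid here since $\Omega$ is star-shaped with respect to a ball and $\beta$ satisfies \eqref{eq:beta1}), giving $\Re b(v_h,v_h)\ge\Ccoerj\N{v_h}^2_{V_j}$; the norm equivalence \eqref{eq:normequivDk} again converts this into $\Ccoerj\N{\bv}^2_{\matrixD_k^{(j)}}$.

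There is essentially no analytic obstacle here: the substance of the statement is the algebraic observation that left-preconditioning by $(\matrixD_k^{(j)})^{-1}$ together with the weighted inner product $\langle\cdot,\cdot\rangle_{\matrixD_k^{(j)}}$ exactly reproduces $b(\cdot,\cdot)$ on the discrete space, so the preconditioned operator inherits the same constants $\Ccontj,\Ccoerj$. The only points requiring care are bookkeeping: tracking the conjugate-transpose convention in \eqref{eq:Dkip} so that the factors of $\matrixD_k^{(j)}$ cancel in the right order, and making sure the standing hypotheses of Lemma \ref{lem:MScont} and Theorem \ref{thm:MScoer} are in force (both conditions \eqref{eq:beta2} and \eqref{eq:beta1} on $\beta$, with $A=1/3$ for $j=1$ and $A=k^2$ for $j=2$) so that $\Ccontj$ and $\Ccoerj$ are precisely the constants named in the statement.
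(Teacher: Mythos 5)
Your proposal is correct and follows essentially the same route as the paper's proof: both reduce the weighted inner product of the preconditioned matrix to $(\matrixB^{(j)}\bv,\bw)_2 = b(v_h,w_h)$ via the cancellation $\bw^*\matrixD_k^{(j)}(\matrixD_k^{(j)})^{-1}\matrixB^{(j)}\bv=\bw^*\matrixB^{(j)}\bv$, and then transfer the continuity and coercivity constants of $b(\cdot,\cdot)$ using the norm equivalence \eqref{eq:normequivDk}. The only cosmetic difference is that you cite Lemma \ref{lem:MScont} and Theorem \ref{thm:MScoer} explicitly, whereas the paper simply takes $\Ccontj$ and $\Ccoerj$ as given by the hypotheses of the lemma.
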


\begin{proof}
The definitions of continuity and coercivity in the $\N{\cdot}_{V_1}$ and $\N{\cdot}_{V_2}$ norms, the definition of $\matrixB^{(j)}$ \eqref{eq:Bg}/\eqref{eq:Bg2}, and the norm-equivalence \eqref{eq:normequivDk} imply that 
\begin{equation}\label{eq:10}
\left|\left(\matrixB^{(j)} \bv,\bw\right)_2\right| \leq \Ccontj \N{\bv}_{\matrixD_k^{(j)}}\N{\bw}_{\matrixD_k^{(j)}} \quad \left|\left(\matrixB^{(j)} \bv,\bv\right)_2\right| \geq \Ccoerj \N{\bv}^2_{\matrixD_k^{(j)}}.
\end{equation}
The results then follow from the fact that
$$
\left(\matrixB^{(j)} \bv,\bw\right)_2= \left( \matrixD_k^{(j)} \left(\matrixD_k^{(j)}\right)^{-1} \matrixB^{(j)}\bv, \bw\right)_2= \left\langle \left(\matrixD_k^{(j)}\right)^{-1} \matrixB^{(j)}\bv, \bw\right\rangle_{\matrixD_k^{(j)}}.
$$
\end{proof}

\begin{lemma}[Continuity and coercivity in $(\cdot,\cdot)_{\matrixD_k^{-1}}$ for right-preconditioned system]\label{lem:contcoer_right}\ 

\noindent
Under the same assumptions as Lemma \ref{lem:contcoer_left},
\begin{align}\label{eq:cont_coer_Dkinv1}
\left|\left\langle \matrixB^{(j)} \left(\matrixD_k^{(j)}\right)^{-1}\bv,\bw\right\rangle_{\left(\matrixD_k^{(j)}\right)^{-1}}\right| &\leq \Ccontj\N{\bv}_{\left(\matrixD_k^{(j)}\right)^{-1}}\N{\bw}_{\left(\matrixD_k^{(j)}\right)^{-1}},\quad\tand\\
\left|\left\langle \matrixB^{(j)}\left(\matrixD_k^{(j)}\right)^{-1} \bv,\bv\right\rangle_{\left(\matrixD_k^{(j)}\right)^{-1}}\right| &\geq \Ccoerj \N{\bv}^2_{\left(\matrixD_k^{(j)}\right)^{-1}}\label{eq:cont_coer_Dkinv2}
\end{align}
for all $\bv,\bw\in \Com^N$.
\end{lemma}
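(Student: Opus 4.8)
The plan is to reduce both inequalities in \eqref{eq:cont_coer_Dkinv1}--\eqref{eq:cont_coer_Dkinv2} to the Euclidean-inner-product bounds \eqref{eq:10} already obtained in the proof of Lemma~\ref{lem:contcoer_left}. The key observation is that right-preconditioning by $(\matrixD_k^{(j)})^{-1}$, measured in the $(\matrixD_k^{(j)})^{-1}$-inner product, becomes plain left-multiplication by $\matrixB^{(j)}$ measured in the Euclidean inner product after the change of variable $\tbfu:=\big(\matrixD_k^{(j)}\big)^{-1}\bv$ (and likewise for $\bw$). Since $\matrixD_k^{(j)}$ is Hermitian positive definite, this change of variable is a bijection of $\Com^N$, so proving the bounds for all transformed vectors is equivalent to proving them for all $\bv,\bw$.

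Concretely, I would first unfold the weighted pairing using the definition \eqref{eq:Dkip} (with the weight matrix equal to $(\matrixD_k^{(j)})^{-1}$):
$$\left\langle \matrixB^{(j)}\big(\matrixD_k^{(j)}\big)^{-1}\bv,\bw\right\rangle_{(\matrixD_k^{(j)})^{-1}} = \bw^*\big(\matrixD_k^{(j)}\big)^{-1}\matrixB^{(j)}\big(\matrixD_k^{(j)}\big)^{-1}\bv.$$
Substituting $\bv=\matrixD_k^{(j)}\tbfu$ and $\bw=\matrixD_k^{(j)}\tilde\bw$, and using $\big(\matrixD_k^{(j)}\big)^*=\matrixD_k^{(j)}$ together with $\matrixD_k^{(j)}\big(\matrixD_k^{(j)}\big)^{-1}=\matrixI$, the two flanking factors of $(\matrixD_k^{(j)})^{-1}$ cancel the factors of $\matrixD_k^{(j)}$ produced by the substitution, leaving $\tilde\bw^*\matrixB^{(j)}\tbfu=\big(\matrixB^{(j)}\tbfu,\tilde\bw\big)_2$. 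The remaining ingredient is that the substitution is an isometry between the two weighted norms:
$$\N{\tbfu}^2_{\matrixD_k^{(j)}}=\tbfu^*\matrixD_k^{(j)}\tbfu=\bv^*\big(\matrixD_k^{(j)}\big)^{-1}\bv=\N{\bv}^2_{(\matrixD_k^{(j)})^{-1}},$$
and similarly $\N{\tilde\bw}_{\matrixD_k^{(j)}}=\N{\bw}_{(\matrixD_k^{(j)})^{-1}}$.

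With these identities in hand, the continuity bound \eqref{eq:cont_coer_Dkinv1} follows by applying the continuity half of \eqref{eq:10} to $(\matrixB^{(j)}\tbfu,\tilde\bw)_2$, and the coercivity bound \eqref{eq:cont_coer_Dkinv2} follows by taking $\bw=\bv$ (hence $\tilde\bw=\tbfu$) and applying the coercivity half of \eqref{eq:10} to $(\matrixB^{(j)}\tbfu,\tbfu)_2$. I do not expect a genuine obstacle here: the argument is the mirror image of the proof of Lemma~\ref{lem:contcoer_left}, and the only point requiring care is the bookkeeping of adjoints, namely checking that the Hermitian symmetry of $\matrixD_k^{(j)}$ makes the weighting in the inner product cancel precisely the factors introduced by the substitution, so that the unweighted pairing $(\matrixB^{(j)}\,\cdot\,,\,\cdot\,)_2$ re-emerges.
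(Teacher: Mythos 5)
Your proposal is correct and is essentially identical to the paper's own proof: both reduce the right-preconditioned weighted bounds to the Euclidean bounds \eqref{eq:10} via the substitution $\bv\mapsto(\matrixD_k^{(j)})^{-1}\bv$ (the paper writes the same change of variable in the opposite direction, $\widetilde{\bv}:=\matrixD_k^{(j)}\bv$) and the norm identity $\N{(\matrixD_k^{(j)})^{-1}\widetilde{\bv}}_{\matrixD_k^{(j)}}=\N{\widetilde{\bv}}_{(\matrixD_k^{(j)})^{-1}}$. No gaps.
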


\begin{proof} The first equation in \eqref{eq:10} with $\widetilde{\bv}:= \matrixD_k^{(j)} \bv$ and $\widetilde{\bw}:= \matrixD_k^{(j)} \bw$ implies that 
$$
\left\langle \matrixB^{(j)} (\matrixD_k^{(j)})^{-1}\widetilde{\bv},\widetilde{\bw}\right\rangle_{\left(\matrixD_k^{(j)}\right)^{-1}}\leq \Ccontj\N{\left(\matrixD_k^{(j)}\right)^{-1}\widetilde{\bv}}_{\matrixD_k^{(j)}}\N{\left(\matrixD_k^{(j)}\right)^{-1}\widetilde{\bw}}_{\matrixD_k^{(j)}},
$$
and \eqref{eq:cont_coer_Dkinv1} follows since
$$
\N{\left(\matrixD_k^{(j)}\right)^{-1}\widetilde{\bv}}_{\matrixD^{(j)}_k}^2 = \N{\widetilde{\bv}}_{\left(\matrixD_k^{(j)}\right)^{-1}}^2
$$
from the definitions \eqref{eq:Dkip}. 
The proof of \eqref{eq:cont_coer_Dkinv2} is analogous.
\end{proof}

We now focus just on left preconditioning, but highlight that analogues of the results below hold for right preconditioning, using Lemma \ref{lem:contcoer_right} instead of Lemma \ref{lem:contcoer_left}.

Recall from Lemma \ref{lem:MScont} and Theorem \ref{thm:MScoer} that (with $\beta$ satisfying \eqref{eq:beta2} and \eqref{eq:beta1})
$$
\Ccont^{(1)} \sim k ,\quad 
\Ccont^{(2)}\sim 1, \quad
\Ccoer^{(1)}\sim 1, \quad 
\Ccoer^{(2)}\sim 1.
$$
These asymptotics imply that the ratio $\cos\sigma$ \eqref{eq:cosbeta} is independent of $k$ when $j=2$ (i.e.~when the $\matrixD_k^{(2)}$ matrix is used as a weight and $A=k^2$), but $\cos\sigma \sim 1/k$ when $j=1$ (i.e.~when the $\matrixD_k^{(1)}$ matrix is used as a weight and $A=1/3$). 
Combining this with Corollary \ref{cor:Elman} shows that weighted GMRES applied to $(\matrixD_k^{(j)})^{-1} \matrixB^{(j)}$ (with the weight $\matrixD_k^{(j)}$) converges in
a number of iterations depending linearly on $k$ when $j=1$, and in a $k$-independent number of iterations when $j=2$; we state these results as the following two theorems. 

\begin{theorem}[$k$-dependent GMRES convergence for $(\matrixD_k^{(1)})^{-1} \matrixB^{(1)}$]\label{thm:GMRES_us1} \

\noi
Assume that $\Omega$ is star-shaped with respect to a ball, $\beta$ satisfies both \eqref{eq:beta2} and \eqref{eq:beta1}, and $A=1/3$.

Let $\bu^m$ denote the $m$th iterate of weighted GMRES applied to the system $\matrixB^{(1)}\bu=\bg^{(1)}$, left  preconditioned with $(\matrixD_k^{(1)})^{-1}$, i.e.~the residual $\br^m=(\matrixD_k^{(1)})^{-1}(\bg^{(1)}-\matrixB^{(1)}\bu^m)$ is minimised in
the norm induced by $\matrixD_k^{(1)}$.

\

\noi (i) Given $k_0>0$, there exists a $C_1>0$, dependent on $k_0$ but independent of $k$, such that, given $0<\delta<1$, if 
\begin{equation}\label{eq:m1}
m\geq C_1 k \log \left(\frac{12}{\delta}\right),
\end{equation}
then
\begin{equation}\label{eq:conv_est1}
\frac{\Vert \bfr^m \Vert_{\matrixD_k^{(1)}}} { \Vert \bfr^0 \Vert_{\matrixD_k^{(1)}} } \
\leq  \delta
\end{equation}
for all $k\geq k_0$; i.e.~GMRES converges in a number of iterations at most linearly dependent on $k$.

\

\noi (ii) Moreover, let $u_N$ denote the Galerkin solution of the variational problem \eqref{eq:coercive} (i.e.~$u_N$ is the finite-element function corresponding to the vector $\bu$), and let $u_N^m$ denote the finite-element function corresponding to the $m$th iterate $\bu^m$. If the initial guess $\bu^0=\bze$, then for all $m$ satisfying \eqref{eq:m1},
\begin{equation}\label{eq:GMRES_rel_err1}
\frac{
\N{u_N-u_N^m}_{V_1}
}{
\N{u_N}_{V_1}
}
\leq \frac{
\Ccont^{(1)}
}{
\Ccoer^{(1)}
}\delta \sim k \delta.
\end{equation}
\end{theorem}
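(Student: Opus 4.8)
The plan is to prove part (i) by applying Corollary~\ref{cor:Elman} to the preconditioned matrix $\matrixC:=(\matrixD_k^{(1)})^{-1}\matrixB^{(1)}$ with weight $\matrixP=\matrixD_k^{(1)}$, and to prove part (ii) by converting the residual bound of part (i) into an error bound via coercivity. First, for part (i), I would read off from Lemma~\ref{lem:contcoer_left} that $\matrixC$ satisfies $\N{\matrixC}_{\matrixD_k^{(1)}}\le \Ccont^{(1)}$ (setting $\bw=\matrixC\bv$ in the continuity inequality yields exactly the operator-norm bound) and $\mathrm{dist}(0,W_{\matrixD_k^{(1)}}(\matrixC))\ge \Ccoer^{(1)}$ (the coercivity inequality says every point of the field of values has modulus at least $\Ccoer^{(1)}$). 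Substituting these into the definition of $\epsilon$ in Corollary~\ref{cor:Elman} gives
\begin{equation*}
\sin\epsilon=\frac{\mathrm{dist}\big(0,W_{\matrixD_k^{(1)}}(\matrixC)\big)}{\N{\matrixC}_{\matrixD_k^{(1)}}}\ge\frac{\Ccoer^{(1)}}{\Ccont^{(1)}}.
\end{equation*}

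Next I would insert the asymptotics $\Ccont^{(1)}\sim k$ and $\Ccoer^{(1)}\sim1$ (recalled just before the theorem from Lemma~\ref{lem:MScont} and Theorem~\ref{thm:MScoer}), which give $\sin\epsilon\gtrsim 1/k$. Since $\epsilon\in(0,\pi/2]$ and $\sin\epsilon\le\epsilon$, we get $\epsilon\ge\sin\epsilon\gtrsim 1/k$, hence $1/\epsilon\lesssim k$. Corollary~\ref{cor:Elman} then states that $m\ge (C/\epsilon)\log(12/\delta)$ forces the relative residual below $\delta$; since $1/\epsilon\lesssim k$ uniformly for $k\ge k_0$, it suffices to take $m\ge C_1 k\log(12/\delta)$ for a suitable $C_1=C_1(k_0)$, which is exactly \eqref{eq:m1}--\eqref{eq:conv_est1}.

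For part (ii) I would exploit that with $\bu^0=\bze$ the preconditioned residuals are images of the errors under $\matrixC$: since $\matrixB^{(1)}\bu=\bg^{(1)}$, we have $\br^m=\matrixC(\bu-\bu^m)$ and $\br^0=\matrixC\bu$. Combining the coercivity inequality of Lemma~\ref{lem:contcoer_left} with the Cauchy--Schwarz inequality in $\langle\cdot,\cdot\rangle_{\matrixD_k^{(1)}}$ gives the lower bound $\N{\matrixC\bv}_{\matrixD_k^{(1)}}\ge\Ccoer^{(1)}\N{\bv}_{\matrixD_k^{(1)}}$, so that $\N{\bu-\bu^m}_{\matrixD_k^{(1)}}\le(\Ccoer^{(1)})^{-1}\N{\br^m}_{\matrixD_k^{(1)}}$; the continuity inequality gives $\N{\br^0}_{\matrixD_k^{(1)}}=\N{\matrixC\bu}_{\matrixD_k^{(1)}}\le\Ccont^{(1)}\N{\bu}_{\matrixD_k^{(1)}}$. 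Chaining these with the residual reduction from part (i) yields
\begin{equation*}
\frac{\N{\bu-\bu^m}_{\matrixD_k^{(1)}}}{\N{\bu}_{\matrixD_k^{(1)}}}\le\frac{1}{\Ccoer^{(1)}}\,\frac{\N{\br^m}_{\matrixD_k^{(1)}}}{\N{\br^0}_{\matrixD_k^{(1)}}}\,\frac{\N{\br^0}_{\matrixD_k^{(1)}}}{\N{\bu}_{\matrixD_k^{(1)}}}\le\frac{\Ccont^{(1)}}{\Ccoer^{(1)}}\,\delta.
\end{equation*}
Finally the norm-equivalence \eqref{eq:normequivDk}, applied to the coefficient vector $\bu-\bu^m$ of $u_N-u_N^m$ and to $\bu$ of $u_N$, converts this weighted-Euclidean bound into the $\N{\cdot}_{V_1}$ bound \eqref{eq:GMRES_rel_err1}, with $\Ccont^{(1)}/\Ccoer^{(1)}\sim k$.

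All the analytic ingredients are already established, so the main obstacle is bookkeeping rather than any deep estimate: I must keep the direction of each inequality correct, in particular deriving the lower bound $\N{\matrixC\bv}_{\matrixD_k^{(1)}}\ge\Ccoer^{(1)}\N{\bv}_{\matrixD_k^{(1)}}$ from coercivity (this is precisely what lets the residual reduction control the error in part (ii)), and ensuring that $C_1$ absorbs the implicit $\sim$ and $\lesssim$ constants uniformly for $k\ge k_0$. No step requires re-examining the internal structure of $\matrixB^{(1)}$ or $\matrixD_k^{(1)}$ beyond the continuity and coercivity already proved.
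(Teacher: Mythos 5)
Your proposal is correct and follows essentially the same route as the paper: part (i) feeds the continuity and coercivity bounds of Lemma~\ref{lem:contcoer_left} (with $\Ccont^{(1)}\sim k$, $\Ccoer^{(1)}\sim 1$) into Corollary~\ref{cor:Elman}, and part (ii) combines the residual reduction with the bounds $\N{\matrixC}_{\matrixD_k^{(1)}}\le\Ccont^{(1)}$ and $\N{\matrixC^{-1}}_{\matrixD_k^{(1)}}\le 1/\Ccoer^{(1)}$ and the norm equivalence \eqref{eq:normequivDk}, exactly as in the paper's proof. Your handling of the inequality directions in $\sin\epsilon\ge\Ccoer^{(1)}/\Ccont^{(1)}\gtrsim 1/k$ is in fact slightly more careful than the paper's shorthand $\cos\sigma\sim 1/k$.
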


\begin{proof}
(i) From Lemma \ref{lem:contcoer_left}, Lemma \ref{lem:MScont}, and Theorem \ref{thm:MScoer} we have that 
$\Ccont^{(1)}\sim k$ and $\Ccoer^{(1)}\sim 1$, and thus $\cos \sigma$ defined by \eqref{eq:cosbeta} $\sim 1/k$. 
Since $\cos\sigma=\sin(\pi/2-\sigma) = (\pi/2-\sigma)(1+o(1))$ as $\sigma\rightarrow \pi/2$, we have that the variable $\epsilon$ in Corollary \ref{cor:Elman} $\sim 1/k$ and then the result of Part (i) follows.

(ii) To make the expressions more compact, we write $\matrixD_k$ for $\matrixD_k^{(1)}$ in this proof, and similarly for $\matrixB, \Ccont$, and $\Ccoer$. The residual-reduction bound \eqref{eq:conv_est1} with $\bu^0=\bze$ implies that
$$
\N{\matrixD_k^{-1} \matrixB(\bu^m -\bu) }_{\matrixD_k} \leq \delta \N{\matrixD_k^{-1}\matrixB\bu}_{\matrixD_k},
$$
so that
\begin{align*}
\N{\bu^m -\bu }_{\matrixD_k} &\leq \N{(\matrixD_k^{-1} \matrixB)^{-1}}_{\matrixD_k} \N{\matrixD_k^{-1} \matrixB(\bu^m-\bu)}_{\matrixD_k},\\
&\leq \N{(\matrixD_k^{-1} \matrixB)^{-1}}_{\matrixD_k} \delta \N{\matrixD_k^{-1} \matrixB \bu}_{\matrixD_k},\\
&\leq \N{(\matrixD_k^{-1} \matrixB)^{-1}}_{\matrixD_k} \delta \N{\matrixD_k^{-1} \matrixB}_{\matrixD_k}\N{\bu}_{\matrixD_k}\leq \frac{\Ccont}{\Ccoer} \delta \N{\bu}_{\matrixD_k} = \frac{\Ccont}{\Ccoer} \delta \N{u_N}_{V_1},
\end{align*}
where we have used both the norm equivalence \eqref{eq:normequivDk} and the facts that 
$$
\N{\matrixD_k^{-1} \matrixB}_{\matrixD_k}\leq \Ccont \quad\tand\quad \N{(\matrixD_k^{-1} \matrixB)^{-1}}_{\matrixD_k} \leq \frac{1}{\Ccoer},
$$
which follow from \eqref{eq:cont_coer_Dk}.

\end{proof}

\begin{theorem}[$k$-independent GMRES convergence for $(\matrixD_k^{(2)})^{-1} \matrixB^{(2)}$]\label{thm:GMRES_us2}
\quad Assume that $\Omega$ is star-shaped with respect to a ball, $\beta$ satisfies both \eqref{eq:beta2} and \eqref{eq:beta1}, and $A=k^2$.

Let $\bu^m$ denote the $m$th iterate of weighted GMRES applied to the system $\matrixB^{(2)}\bu=\bg^{(2)}$, left  preconditioned with $(\matrixD_k^{(2)})^{-1}$, i.e.~the residual $\br^m=(\matrixD_k^{(2)})^{-1}(\bg^{(2)}-\matrixB^{(2)}\bu^m)$ is minimised in
the norm induced by $\matrixD_k^{(2)}$.

\

\noi (i) 
Given $k_0>0$, there exists a $C_2>0$, dependent on $k_0$ but independent of $k$, such that, given $0<\delta<1$, if 
\begin{equation}\label{eq:m2}
m\geq C_2 \log \left(\frac{12}{\delta}\right),
\end{equation}
then
\begin{equation*}
\frac{\Vert \bfr^m \Vert_{\matrixD_k^{(2)}}} { \Vert \bfr^0 \Vert_{\matrixD_k^{(2)}} } \
\leq  \delta
\end{equation*}
for all $k\geq k_0$;
i.e.~GMRES converges in a $k$-independent number of iterations.

\

\noi (ii) Moreover, let $u_N$ denote the Galerkin solution of the variational problem \eqref{eq:coercive} (i.e.~$u_N$ is the finite-element function corresponding to the vector $\bu$), and let $u_N^m$ denote the finite-element function corresponding to the $m$th iterate $\bu^m$. If the initial guess $\bu^0=\bze$, then for all $m$ satisfying \eqref{eq:m2}
\begin{equation*}
\frac{
\N{u_N-u_N^m}_{V_2}
}{
\N{u_N}_{V_2}
}
\leq \frac{
\Ccont^{(2)}
}{
\Ccoer^{(2)}
}\delta \sim \delta.
\end{equation*}
\end{theorem}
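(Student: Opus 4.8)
The plan is to run the same argument as in the proof of Theorem \ref{thm:GMRES_us1}, but now with $j=2$, $A=k^2$, and the weight $\matrixD_k^{(2)}$; the decisive difference is that for $j=2$ both the continuity and coercivity constants are independent of $k$, so the field-of-values angle stays bounded away from $\pi/2$ uniformly in $k$.

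For Part (i), I would first apply Lemma \ref{lem:contcoer_left} with $j=2$ to the left-preconditioned matrix $\matrixC:=(\matrixD_k^{(2)})^{-1}\matrixB^{(2)}$ in the inner product $\langle\cdot,\cdot\rangle_{\matrixD_k^{(2)}}$. The coercivity inequality in \eqref{eq:cont_coer_Dk} shows that $0\notin W_{\matrixD_k^{(2)}}(\matrixC)$ with $\mathrm{dist}(0,W_{\matrixD_k^{(2)}}(\matrixC))\geq \Ccoer^{(2)}$, while the continuity inequality gives $\N{\matrixC}_{\matrixD_k^{(2)}}\leq \Ccont^{(2)}$; hence the ratio $\cos\sigma$ of \eqref{eq:cosbeta} satisfies $\cos\sigma\geq \Ccoer^{(2)}/\Ccont^{(2)}$. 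By Lemma \ref{lem:MScont}(ii) and Theorem \ref{thm:MScoer}(ii) we have $\Ccont^{(2)}\sim 1$ and $\Ccoer^{(2)}\sim 1$, so $\cos\sigma$ is bounded below by a positive constant independent of $k$. Consequently the quantity $\epsilon=\pi/2-\sigma$ of Corollary \ref{cor:Elman}, which satisfies $\sin\epsilon=\cos\sigma$, is bounded below independently of $k$, i.e.~$\epsilon\gtrsim 1$. Feeding this into Corollary \ref{cor:Elman} turns the iteration count $m\geq(C/\epsilon)\log(12/\delta)$ into $m\geq C_2\log(12/\delta)$ with $C_2$ independent of $k$, which is Part (i).

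For Part (ii), I would follow the matrix manipulations of the proof of Theorem \ref{thm:GMRES_us1}(ii), replacing $\matrixD_k,\matrixB,\Ccont,\Ccoer$ and $\N{\cdot}_{V_1}$ throughout by $\matrixD_k^{(2)},\matrixB^{(2)},\Ccont^{(2)},\Ccoer^{(2)}$ and $\N{\cdot}_{V_2}$. Starting from the residual bound of Part (i) with $\bu^0=\bze$, I would use $\N{(\matrixD_k^{(2)})^{-1}\matrixB^{(2)}}_{\matrixD_k^{(2)}}\leq \Ccont^{(2)}$ and $\N{((\matrixD_k^{(2)})^{-1}\matrixB^{(2)})^{-1}}_{\matrixD_k^{(2)}}\leq 1/\Ccoer^{(2)}$ (both from \eqref{eq:cont_coer_Dk} with $j=2$), together with the norm equivalence \eqref{eq:normequivDk}, namely $\N{u_N}_{V_2}=\N{\bu}_{\matrixD_k^{(2)}}$, to obtain $\N{u_N-u_N^m}_{V_2}\leq(\Ccont^{(2)}/\Ccoer^{(2)})\,\delta\,\N{u_N}_{V_2}$. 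Since $\Ccont^{(2)}/\Ccoer^{(2)}\sim 1$, this yields the claimed $k$-independent relative-error bound.

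The proof involves no genuine obstacle: once Lemma \ref{lem:contcoer_left} and the $k$-uniform bounds $\Ccont^{(2)}\sim\Ccoer^{(2)}\sim 1$ from Theorem \ref{thm:MScoer}(ii) are in hand, both parts are direct specialisations of the $j=1$ argument. The only point requiring care is bookkeeping---tracking the weight $\matrixD_k^{(2)}$ and the $\N{\cdot}_{V_2}$ norm consistently through the field-of-values estimate and the matrix-norm chain---and recognising that the $k$-independence here is precisely the payoff of coercivity in the $V_2$ norm, in contrast to the $V_1$ norm where $\Ccont^{(1)}\sim k$ forces the linear-in-$k$ iteration count of Theorem \ref{thm:GMRES_us1}.
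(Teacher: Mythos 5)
Your proposal is correct and follows exactly the route the paper takes: the paper's proof of Theorem \ref{thm:GMRES_us2} simply notes that it is the argument of Theorem \ref{thm:GMRES_us1} with $\Ccont^{(2)}\sim 1$ and $\Ccoer^{(2)}\sim 1$, so that $\epsilon$ in Corollary \ref{cor:Elman} is $k$-independent, which is precisely your observation. Your write-up just spells out the specialisation in more detail than the paper bothers to.
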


\begin{proof}
The proof is very similar to that of Theorem \ref{thm:GMRES_us1}, but we now have $\Ccont^{(2)}\sim 1$ and $\Ccoer^{(2)}\sim 1$; in particular, in Part (i), $\epsilon$ is now independent of $k$. 
\end{proof}

\

As highlighted at the beginning of this subsection, all the results for MS with $A=k^2$ also hold for LS formulation because the $k$-dependence of $\Ccont/\Ccoer$ is the same for the two formulations (see Lemma \ref{lem:LSCC} and Corollary \ref{cor:MS}).

\subsection{Numerical experiments}\label{sec:fast_num}

\begin{figure}[htb]
\includegraphics[width=73mm]{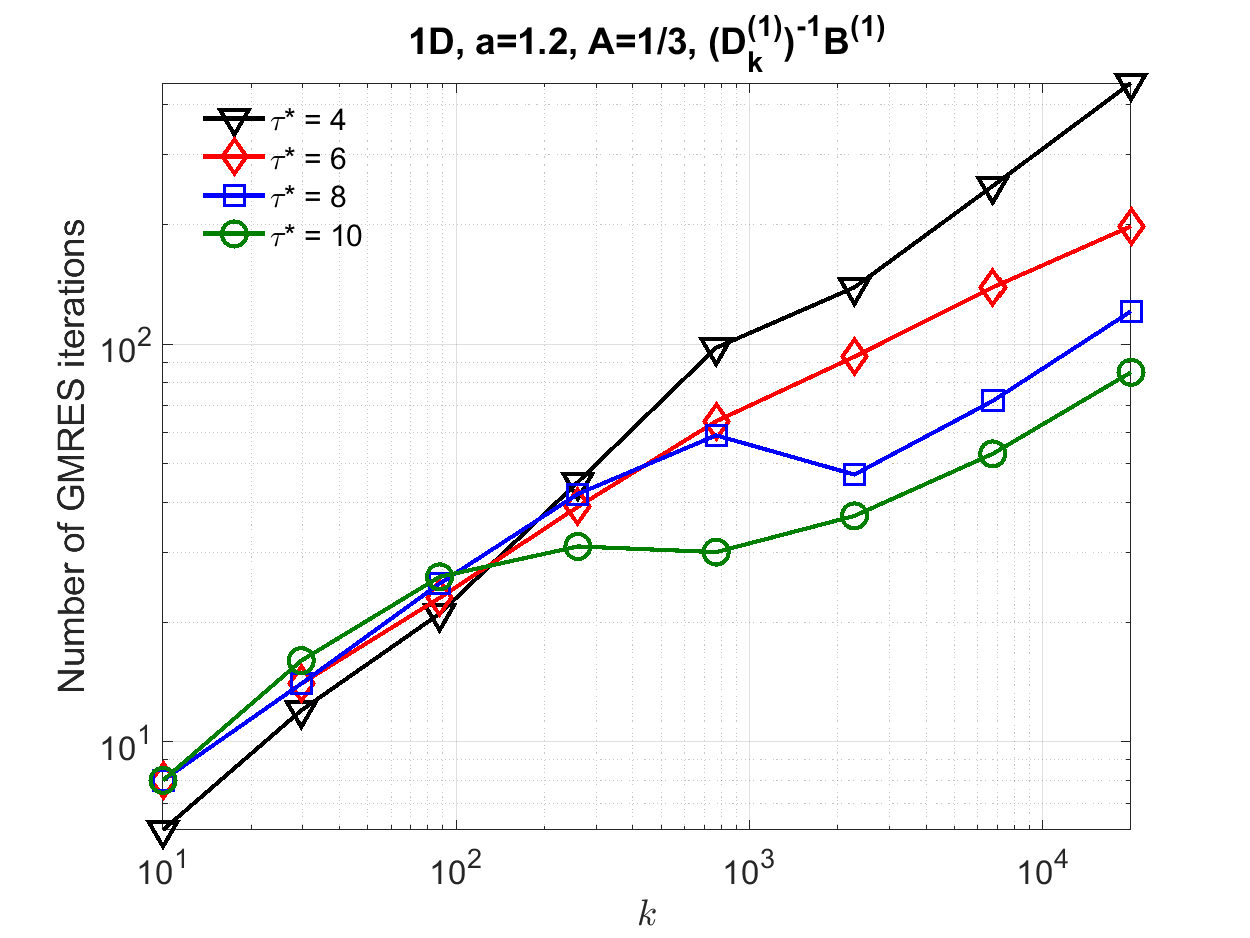}
\includegraphics[width=73mm]{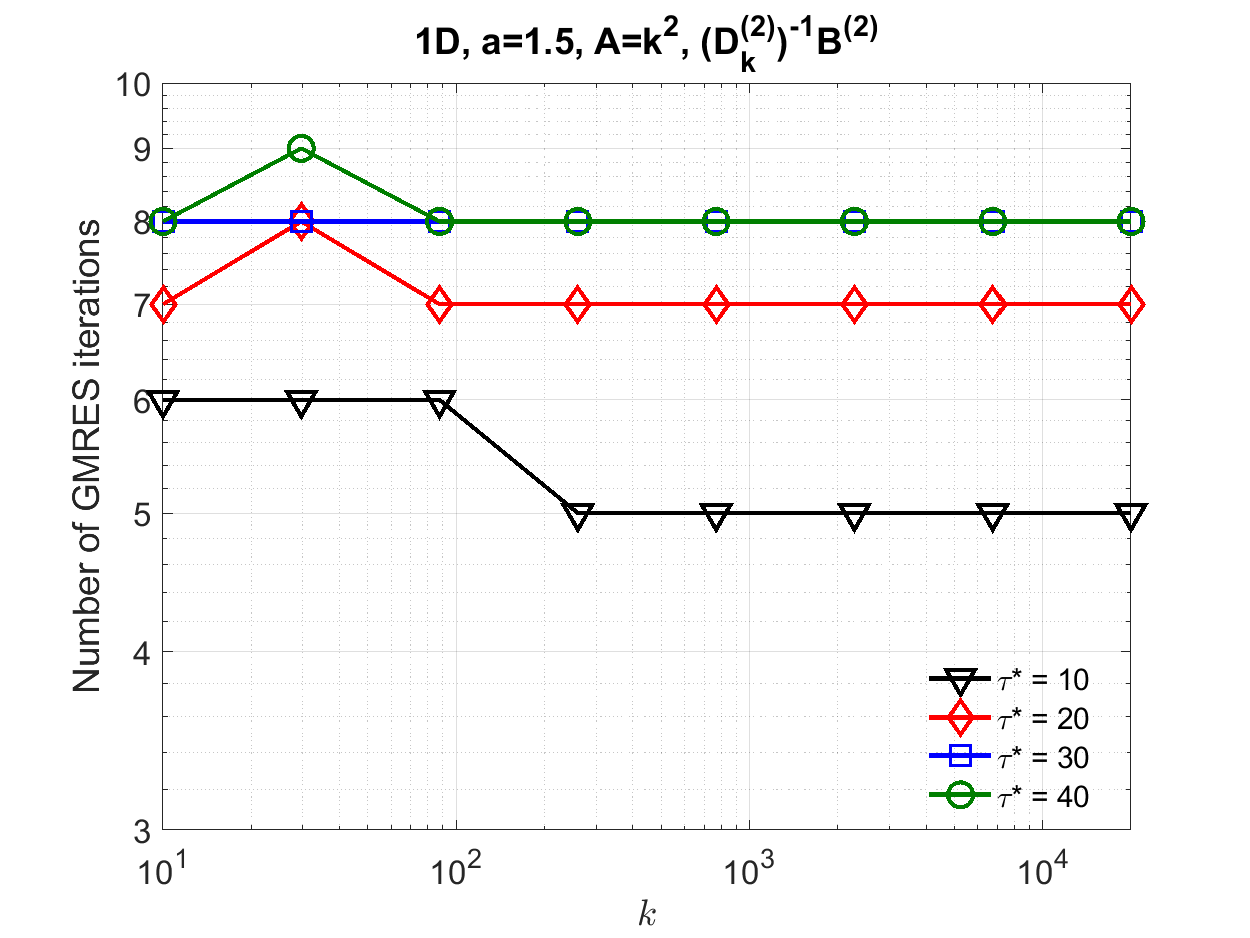}\\
\includegraphics[width=73mm]{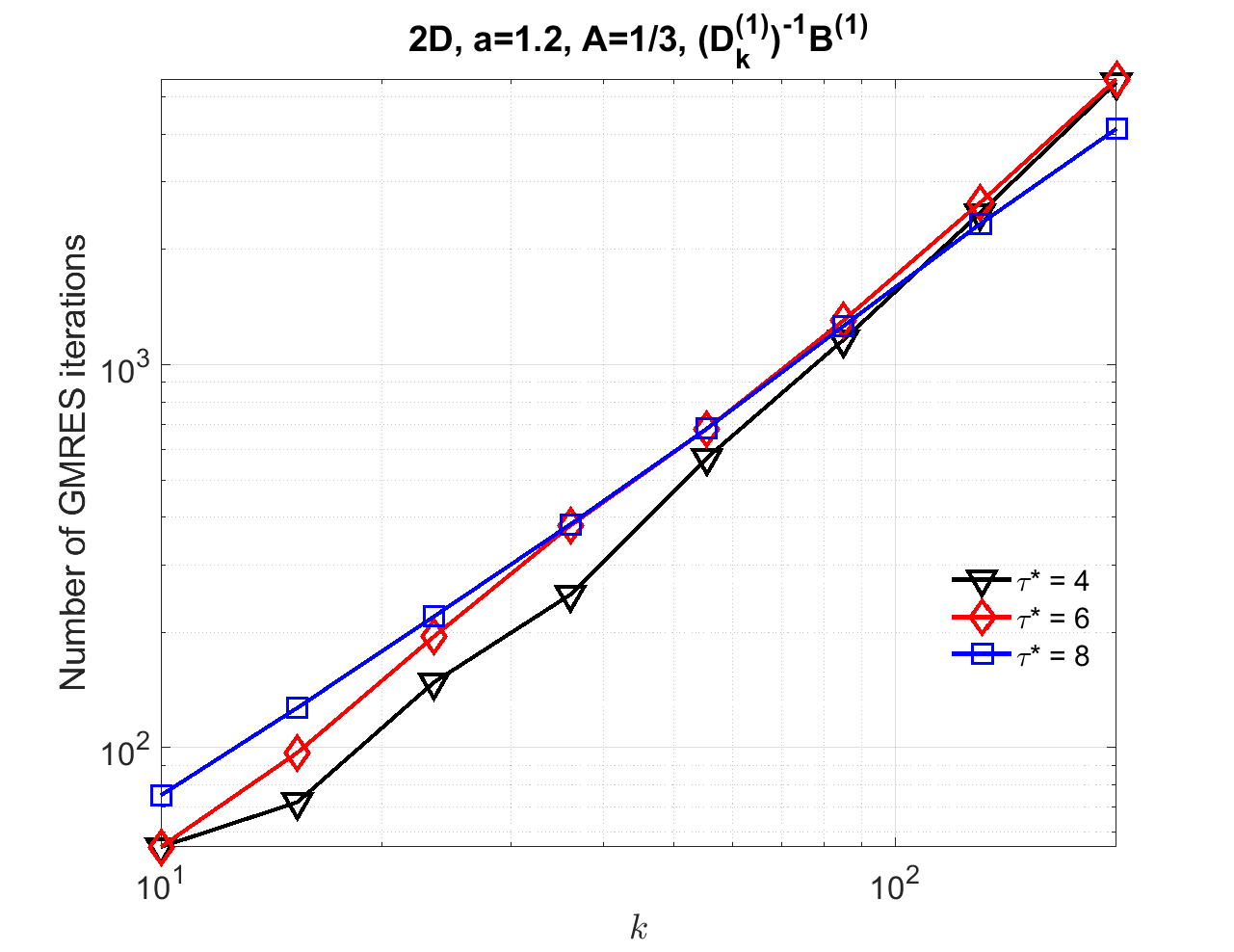}
\includegraphics[width=73mm]{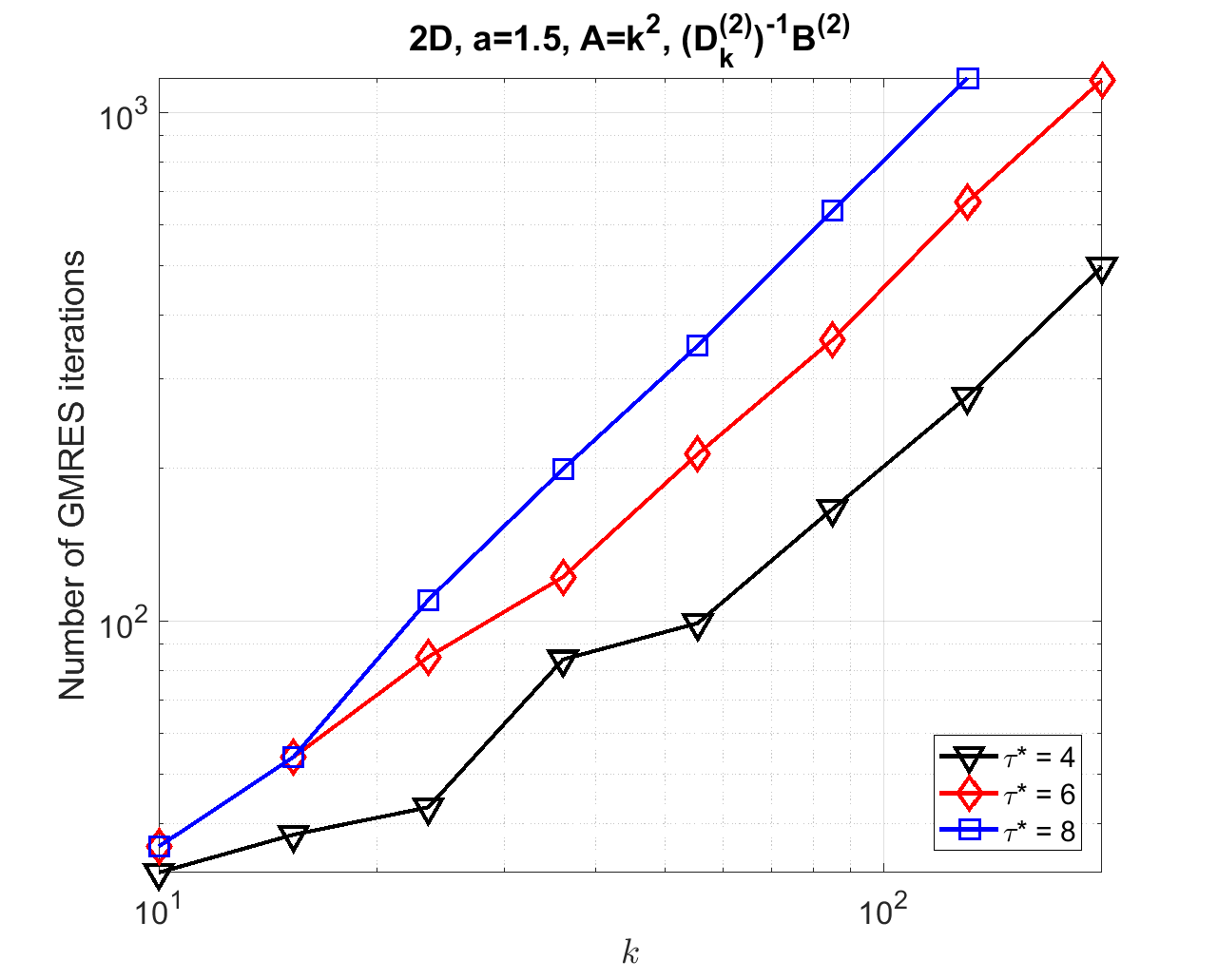}
\caption{The growth of the number of GMRES iterations with $k$, for both 1-d (top) and 2-d (bottom), for $(\matrixD_k^{(1)})^{-1} \matrixB^{(1)}$, discretised with $hk^{6/5}$ constant (left), and $(\matrixD_k^{(2)})^{-1} \matrixB^{(2)}$, discretised with $hk^{3/2}$ constant (right).
}
\label{fig:GMRES}
\end{figure}

We now describe experiments, in both 1- and 2-d, on the behaviour of GMRES applied to the MS formulation (involving the matrices $\matrixB^{(1)}$ \eqref{eq:Bg} and $\matrixB^{(2)}$ \eqref{eq:Bg2}), with the approximation space $V_N$ as described in \S\ref{sec:Vn}. We discuss these results in the context of preconditioning the standard formulation in \S\ref{sec:prec}.

Figure \ref{fig:GMRES} shows, for both 1- and 2-d, the growth of the number of GMRES iterations with $k$ for the situations described in Theorems \ref{thm:GMRES_us1} and \ref{thm:GMRES_us2} (i.e.~left preconditioning) except that Theorems \ref{thm:GMRES_us1} and \ref{thm:GMRES_us2} are for GMRES with weight $\matrixD_k^{(j)}$ ($j=1,2$)
applied to $(\matrixD_k^{(j)})^{-1}\matrixB^{(j)}$, and Figures \ref{fig:GMRES} display the results of standard (unweighted) GMRES applied to $(\matrixD_k^{(j)})^{-1}\matrixB^{(j)}$; we find the behaviour of weighted GMRES essentially identical  (note that this situation of the behaviour of GMRES being almost identical in the weighted and unweighted settings was also encountered in the domain-decomposition methods of \cite{BoDoGrSpTo:17, GrSpVa:17}). We also find 
essentially identical results for right preconditioning.

Although the results of Theorems \ref{thm:GMRES_us1} and \ref{thm:GMRES_us2} are independent of the mesh diameter $h$, in creating $ \matrixB^{(1)}$ (arising from \eqref{eq:B} with $A=1/3$) we choose $h$ such that $hk^{6/5}$ is constant, and in creating $\matrixB^{(2)}$ (arising from \eqref{eq:B} with $A=k^2$) we choose $h$ such that $hk^{3/2}$ is constant; recall that the theory and experiments in \S\ref{sec:test_acc} imply that, with these choices, the relative $H^1_k$-error in the Galerkin solutions is bounded independently of $k$.

The 1-d results are consistent with Theorems \ref{thm:GMRES_us1} and \ref{thm:GMRES_us2} in that the number of iterations for $(\matrixD_k^{(2)})^{-1} \matrixB^{(2)}$ is bounded independently of $k$, and the number of iterations for $(\matrixD_k^{(1)})^{-1} \matrixB^{(1)}$ grows at most linearly in $k$. In fact, in the latter case, the growth is sublinear: the rates (calculated from the least-squares best linear approximation) are $0.5692$, $0.4235$, $0.3227$, and $0.2583$ for $\tau^*=4,6,8,$ and $10$ respectively.

The rates of growth observed in the 2-d results are worse than predicted by Theorems \ref{thm:GMRES_us1} and \ref{thm:GMRES_us2}, although the growth for 
$(\matrixD_k^{(2)})^{-1} \matrixB^{(2)}$ is still less than that for $(\matrixD_k^{(1)})^{-1} \matrixB^{(1)}$. Indeed, 
for $\tau^*=4,6,8$, the rates of growth for 
$(\matrixD_k^{(1)})^{-1} \matrixB^{(1)}$ are $1.5829$, $1.5341$, and $1.3471$ respectively, and for $(\matrixD_k^{(2)})^{-1} \matrixB^{(2)}$ they are $0.9272$, $1.1608$, $1.3801$.

We have performed various checks to try to resolve the discrepancies between the numerical results and Theorems \ref{thm:GMRES_us1} and \ref{thm:GMRES_us2}. 
Although the theorems are valid for all $k\geq k_0$, and $k_0>0$ can be chosen arbitrarily small, since they are obtained using Corollary \ref{cor:Elman}, they will only be sharp in the limit $k\tendi$. Indeed, the $k\tendi$ limit corresponds to the $\epsilon\tendo$ limit in Corollary \ref{cor:Elman}, and any deviation from the large-$k$ asymptotics for $k$ small is then absorbed into the constants $C_1$ and $C_2$ in \eqref{eq:m1} and \eqref{eq:m2} respectively. It is therefore possible that the growth rates predicted by the theorems only manifest themselves for larger $k$ than considered in these numerical experiements.

\section{Estimates on the finite-element error of the GMRES solution}\label{sec:conc}

The results of Proposition \ref{lem:H1error} and Theorems \ref{thm:GMRES_us1}/\ref{thm:GMRES_us2} can be combined in the following theorems, but first we need to state an assumption about the solution of the IIP.

\begin{assumption}\label{ass:2}
The solution $u$ of the IIP satisfies
\begin{equation}\label{eq:ass2}
\N{u}_{V_1}\sim \N{u}_{V_2} \sim \N{u}_{\HokO}.
\end{equation}
\end{assumption}

Similar to Assumption \ref{ass:1} (discussed in Remark \ref{rem:ass1}), Assumption \ref{ass:2} is implicitly ruling out ``unphysical" $f$ and $g$.

\begin{theorem}[Summary of results about MS with $A=1/3$]\label{thm:main_result1}

Assume that $\Omega$ is star-shaped with respect to a ball, $\beta$ satisfies both \eqref{eq:beta2} and \eqref{eq:beta1}, and $A=1/3$. Assume further that Assumptions \ref{ass:1} and \ref{ass:2} are satisfied.

Let $\matrixB^{(1)}$ and $\bg^{(1)}$ be the Galerkin matrix and right-hand side, respectively, of the MS formulation \eqref{eq:Bg}, where the finite-dimensional subspace consists of  $C^1$ elements of fixed degree, and let $\matrixD_k^{(1)}$ be the symmetric, positive-definite 
matrix defined by \eqref{eq:Dk1}. Let $\bu$ be the solution of $\matrixB^{(1)}\bu =\bg^{(1)}$, so that the finite-element function corresponding to $\bu$ is $u_N$, the Galerkin solution.

Let $\bu^m$ denote the $m$th iterate of weighted GMRES, where the residual $\br^m$ is minimised in
the norm induced by $\matrixD_k^{(1)}$ (as described in \S\ref{sec:GMRES}), applied to the system $\matrixB^{(1)}\bu=\bg^{(1)}$, left  preconditioned with 
$(\matrixD_k^{(1)})^{-1} $. Let $u_N^m$ be the finite-element function corresponding to $\bu^m$.

Given $0<\epsilon<1$ and $k_0>0$, there exists a $C(\epsilon, k_0)$ and $m_0(\eps, k_0)$ (both independent of $h$ and $k$) such that 
\begin{equation}\label{eq:main_estimate1}
\text{if }\; hk^{7/4} \leq C(\epsilon, k_0)\; \tand \; m\geq m_0(\epsilon, k_0) k \log k\; \text{ then }\; 
\frac{\N{u-u_N^m}_{\HokO}}{\N{u}_{\HokO}} \leq \epsilon
\end{equation}
for all $k\geq k_0$.
\end{theorem}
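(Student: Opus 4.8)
The plan is to obtain \eqref{eq:main_estimate1} from a single triangle-inequality split that separates the two sources of error, and then to feed in the two results already established: Proposition~\ref{lem:H1error}(i), which controls the Galerkin discretisation error, and Theorem~\ref{thm:GMRES_us1}(ii), which controls the GMRES truncation error. First I would write
\[
\N{u-u_N^m}_{\HokO} \le \N{u-u_N}_{\HokO} + \N{u_N-u_N^m}_{\HokO},
\]
and aim to make each summand at most $\tfrac{\eps}{2}\N{u}_{\HokO}$. For the first summand I would invoke Proposition~\ref{lem:H1error}(i): since the MS formulation with $A=1/3$ is $hk^{7/4}$-accurate, there is a $C(\eps,k_0)$ such that $hk^{7/4}\le C(\eps,k_0)$ forces $\N{u-u_N}_{\HokO}\le\tfrac{\eps}{2}\N{u}_{\HokO}$ for all $k\ge k_0$. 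This fixes the first smallness hypothesis in \eqref{eq:main_estimate1}.

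The second summand is where the two numerical-analysis goals interact, and it is the part requiring care because the GMRES theory delivers control in the $V_1$-weighted norm while the target accuracy is measured in $\HokO$. Using the norm domination $\N{\cdot}_{\HokO}\le\N{\cdot}_{V_1}$ recorded in \eqref{eq:qo1} together with Theorem~\ref{thm:GMRES_us1}(ii), I would bound
\[
\N{u_N-u_N^m}_{\HokO}\le\N{u_N-u_N^m}_{V_1}\le\frac{\Ccont^{(1)}}{\Ccoer^{(1)}}\,\delta\,\N{u_N}_{V_1}\lesssim k\,\delta\,\N{u_N}_{V_1},
\]
valid once $m\ge C_1 k\log(12/\delta)$. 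To close the loop I must replace $\N{u_N}_{V_1}$ by the reference quantity $\N{u}_{\HokO}$: writing $\N{u_N}_{V_1}\le\N{u}_{V_1}+\N{u-u_N}_{V_1}$, I would bound $\N{u-u_N}_{V_1}$ by the right-hand side of \eqref{eq:3new1} (which is $\lesssim\N{u}_{\HokO}$ under the same condition $hk^{7/4}\le C(\eps,k_0)$) and use Assumption~\ref{ass:2} ($\N{u}_{V_1}\sim\N{u}_{\HokO}$), obtaining $\N{u_N}_{V_1}\lesssim\N{u}_{\HokO}$ and hence $\N{u_N-u_N^m}_{\HokO}\lesssim k\,\delta\,\N{u}_{\HokO}$.

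Finally I would choose the GMRES tolerance $\delta\sim\eps/k$, so that $k\delta\lesssim\eps$ and the second summand is $\le\tfrac{\eps}{2}\N{u}_{\HokO}$; this is exactly where the amplification factor $k=\Ccont^{(1)}/\Ccoer^{(1)}$ is absorbed. Feeding $\delta\sim\eps/k$ into the iteration count $m\ge C_1 k\log(12/\delta)$ turns the requirement into $m\ge C_1 k\log(12k/(c\eps))$; since $\log(12k/(c\eps))\lesssim\log k$ for $k\ge k_0$ (with implied constant depending on $\eps$ and $k_0$), this is guaranteed once $m\ge m_0(\eps,k_0)\,k\log k$ for $m_0$ large enough, which supplies the second hypothesis in \eqref{eq:main_estimate1}. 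The main obstacle is not any single estimate but the bookkeeping of the norm mismatch: tracking the factor $k$ coming from $\Ccont^{(1)}/\Ccoer^{(1)}$ is precisely what upgrades the $k$-linear iteration bound of Theorem~\ref{thm:GMRES_us1} to the $k\log k$ bound asserted here, while Assumption~\ref{ass:2} is essential for returning from $\N{u_N}_{V_1}$ to $\N{u}_{\HokO}$.
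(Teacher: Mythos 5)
Your proof is correct and follows essentially the same route as the paper's: the same triangle-inequality split, with Proposition~\ref{lem:H1error}(i) controlling the discretisation error and Theorem~\ref{thm:GMRES_us1}(ii) controlling the iteration error, and the same mechanism of absorbing the amplification factor $\Ccont^{(1)}/\Ccoer^{(1)}\sim k$ into the GMRES tolerance $\delta$. The one place you diverge is in returning from $\N{u_N}_{V_1}$ to $\N{u}_{\HokO}$: you use the triangle inequality together with the a priori bound \eqref{eq:3new1} (so the mesh condition is invoked a second time) to obtain $\N{u_N}_{V_1}\lesssim\N{u}_{\HokO}$, whereas the paper uses Galerkin orthogonality and coercivity, $\N{u_N}_{V_1}^2\le (\Ccoer^{(1)})^{-1}|b(u,u_N)|$, to get $\N{u_N}_{V_1}\le(\Ccont^{(1)}/\Ccoer^{(1)})\N{u}_{V_1}\sim k\N{u}_{V_1}$, a bound valid for any subspace without a mesh condition. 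As a result you may take $\delta\sim\eps/k$ where the paper takes $\delta\sim\eps(\Ccoer^{(1)}/\Ccont^{(1)})^2\sim\eps k^{-2}$; since $\log(1/\delta)\sim\log k$ in either case, both choices give the same $m\gtrsim k\log k$ iteration count, so the difference is immaterial for the final statement.
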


\begin{theorem}[Summary of results about MS with $A=k^2$]\label{thm:main_result2}
Assume that $\Omega$, $\beta$, $\bu^m$, and $u_N^m$ are as in Theorem \ref{thm:main_result1}, except now that $A=k^2$ and we left precondition $\matrixB^{(2)}\bu=\bg^{(2)}$ with $(\matrixD_k^{(2)})^{-1} $.
Assume further that Assumptions \ref{ass:1} and \ref{ass:2} are satisfied.

Given $0<\epsilon<1$ and $k_0>0$, there exists a $C(\epsilon, k_0)$ and $m_0(\epsilon, k_0)$ (both independent of $h$ and $k$) such that 
\begin{equation}\label{eq:main_estimate2}
\text{if }\; hk^{3/2} \leq C(\epsilon, k_0)\; \tand \; m\geq m_0(\epsilon, k_0) \; \text{ then }\; 
\frac{\N{u-u_N^m}_{\HokO}}{\N{u}_{\HokO}}
 \leq \epsilon
\end{equation}
for all $k\geq k_0$.
\end{theorem}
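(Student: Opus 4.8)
The plan is to combine the finite-element accuracy result of Proposition~\ref{lem:H1error}(ii) with the GMRES convergence result of Theorem~\ref{thm:GMRES_us2}, splitting the total error through the triangle inequality
\begin{equation*}
\N{u-u_N^m}_{\HokO} \leq \N{u-u_N}_{\HokO} + \N{u_N-u_N^m}_{\HokO}.
\end{equation*}
The first term is the Galerkin discretisation error and the second is the error incurred by stopping GMRES after $m$ iterations; I would aim to make each at most $(\epsilon/2)\N{u}_{\HokO}$. For the Galerkin term, Proposition~\ref{lem:H1error}(ii) gives $\N{u-u_N}_{\HokO}\lesssim h^2k^3(1+k^2h^2)\N{u}_{\HokO}$, so requiring $hk^{3/2}\leq C(\epsilon,k_0)$ for a suitable $C$ forces $\N{u-u_N}_{\HokO}\leq(\epsilon/2)\N{u}_{\HokO}$; this fixes the constant $C(\epsilon,k_0)$ in \eqref{eq:main_estimate2}.

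For the GMRES term, I would first pass to the stronger norm using $\N{w}_{\HokO}\leq\N{w}_{V_2}$ (immediate from the definition \eqref{eq:Valt}, since the remaining terms there are nonnegative), and then apply Theorem~\ref{thm:GMRES_us2}(ii): provided $m\geq C_2\log(12/\delta)$ --- a number of iterations independent of $k$ --- we have
\begin{equation*}
\N{u_N-u_N^m}_{\HokO}\leq \N{u_N-u_N^m}_{V_2}\leq \frac{\Ccont^{(2)}}{\Ccoer^{(2)}}\,\delta\,\N{u_N}_{V_2}\sim \delta\,\N{u_N}_{V_2}.
\end{equation*}
It then remains to bound $\N{u_N}_{V_2}$ in terms of $\N{u}_{\HokO}$: writing $\N{u_N}_{V_2}\leq\N{u-u_N}_{V_2}+\N{u}_{V_2}$, the first summand is $\lesssim\N{u}_{\HokO}$ by the $V_2$-bound \eqref{eq:3new2} once $hk^{3/2}$ is small, while $\N{u}_{V_2}\sim\N{u}_{\HokO}$ by Assumption~\ref{ass:2}. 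Hence $\N{u_N-u_N^m}_{\HokO}\lesssim\delta\,\N{u}_{\HokO}$, and choosing $\delta$ small enough --- equivalently a $k$-independent $m_0(\epsilon,k_0)=C_2\log(12/\delta)$ --- makes this term at most $(\epsilon/2)\N{u}_{\HokO}$. Adding the two contributions yields \eqref{eq:main_estimate2}. The whole argument is the exact analogue of the (presumably already given) proof of Theorem~\ref{thm:main_result1}, with $V_1$ and $A=1/3$ replaced by $V_2$ and $A=k^2$; the crucial difference is that here $\Ccont^{(2)}\sim1$ rather than $\sim k$, so the required $m_0$ is independent of $k$ instead of growing like $k\log k$.

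The step I expect to be the main obstacle is converting the $V_2$-relative error supplied by Theorem~\ref{thm:GMRES_us2}(ii) into an $\HokO$-relative error measured against the true solution $u$. Because $\N{\cdot}_{V_2}$ is a substantially stronger norm than $\N{\cdot}_{\HokO}$, there is a genuine danger that $\N{u_N}_{V_2}$ is far larger than $\N{u}_{\HokO}$, which would destroy the clean relative bound. This is precisely where Assumption~\ref{ass:2} is indispensable: it guarantees $\N{u}_{V_2}\sim\N{u}_{\HokO}$ and thereby excludes the ``unphysical'' data for which the solution's $V_2$-norm dominates its $H^1_k$-norm. The two smallness conditions needed along the way ($hk^{3/2}$ small for the Galerkin bound, and again for $\N{u_N}_{V_2}\lesssim\N{u}_{\HokO}$) are mutually compatible, so $C(\epsilon,k_0)$ can be taken as the smaller of the two thresholds.
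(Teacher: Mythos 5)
Your proposal is correct and follows essentially the same route as the paper: the triangle-inequality split, Proposition~\ref{lem:H1error}(ii) for the Galerkin error, Theorem~\ref{thm:GMRES_us2}(ii) for the GMRES error, and Assumption~\ref{ass:2} to return to the $\HokO$ norm of $u$. The only (harmless) deviation is in bounding $\N{u_N}_{V_2}$: the paper uses Galerkin orthogonality and coercivity to get $\N{u_N}_{V_2}\le (\Ccont^{(2)}/\Ccoer^{(2)})\N{u}_{V_2}$ with no mesh condition, whereas you use the triangle inequality together with \eqref{eq:3new2}, which re-invokes the smallness of $hk^{3/2}$ --- both are valid under the stated hypotheses.
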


Theorems \ref{thm:main_result1} shows that, under the condition $hk^{7/4}$ sufficiently small, an approximation to the solution of the IIP, with the error measured in the usual $\HokO$ norm, can be found in number of iterations growing at most like $k\log k$. 
We do not expect the $hk^{7/4}$ to be optimal since the experiments in \S\ref{sec:test_acc} indicated that this method is $hk^{6/5}$-accurate and thus Proposition \ref{lem:H1error} (which dictates the mesh threshold in Theorem \ref{thm:main_result1}) is not sharp.

Theorem \ref{thm:main_result2} shows that under the condition $hk^{3/2}$ sufficiently small, an approximation to the solution of the IIP, with the error measured in the usual $\HokO$ norm, can be found in $k$-independent number of iterations.

\

\begin{proof}[Proof of Theorem \ref{thm:main_result1}] 
In this proof we use $\Ccont$ and $\Ccoer$ to denote $\Ccont^{(1)}$ and $\Ccoer^{(1)}$, and we recall from Lemma \ref{lem:MScont} and Theorem \ref{thm:MScoer} that $\Ccont\sim k$ and $\Ccoer\sim 1$.
We combine the error bounds from Proposition \ref{lem:H1error} with the GMRES convergence estimates of Theorem \ref{thm:GMRES_us1}:
\begin{align*}
\N{u-u_N^m}_{H^1_k(\Omega)} &\le \N{u-u_N^m}_{V_1} \\
&\leq \N{u-u_N}_{V_1} + \N{u_N-u_N^m}_{V_1} \\
& \overset{\eqref{eq:3new1},\eqref{eq:GMRES_rel_err1}}\lesssim  h^2k^{7/2}(1+k^2h^2)\N{u}_{H^1_k(\Omega)} +  \frac{\Ccont}{\Ccoer}\delta \N{u_N}_{V_1}\\ 
&\lesssim  h^2k^{7/2}(1+k^2h^2)\N{u}_{H^1_k(\Omega)} +  \Big(\frac{\Ccont}{\Ccoer}\Big)^2\delta \N{u}_{V_1}.
\end{align*}
Here we also used
$\N{u_N}_{V_1}\le \Ccont\Ccoer^{-1}\N{u}_V$, which follows from Galerkin orthogonality
$\N{u_N}_{V_1}^2\le \Ccoer^{-1}B(u_N,u_N)=\Ccoer^{-1}B(u,u_N)\le \Ccont\Ccoer^{-1}\N{u}_{V_1}\N{u_N}_{V_1}$.
Choosing $\delta= \frac{\epsilon}{2}(\Ccoer/\Ccont)^2 \sim k^{-2}$ and $m\ge C_1 k\log(12/\delta)\sim k\log k$ and using $\N{u}_{V_1}\sim\N{u}_{H^1_k(\Omega)}$ from \eqref{eq:ass2}, we obtain \eqref{eq:main_estimate1}.
\end{proof}

\

\begin{proof}[Proof of Theorem \ref{thm:main_result2}] 
The proof is very similar to that of Theorem \ref{thm:main_result1}.
The only differences are in that $\Ccont/\Ccoer\sim1$ so $\delta\sim1$, the $\N{\cdot}_{V_1}$ norms are replaced by $\N{\cdot}_{V_2}$,
the error bound \eqref{eq:3new1} by \eqref{eq:3new2}, $h^2k^{7/2}$ by $h^2k^3$ and \eqref{eq:m1} by \eqref{eq:m2}.
\end{proof}

\section{Conclusions}

\subsection{Summary of the behaviour of the MS formulation}

When implementing the MS formulation \eqref{eq:coercive}, we have two choices: taking $A=1/3$ or $A=k^2$.
\begin{enumerate}
\item The MS formulation with $A=k^2$ behaves similarly to the least-squares formulation \eqref{eq:vfLS} in terms of accuracy -- both empirically and from Proposition \ref{lem:H1error} it is $hk^{3/2}$-accurate -- and this is worse than for the standard formulation \eqref{eq:vfH1}. On the other hand, we have a symmetric positive-definite preconditioner for this formulation
that (empirically) gives a $k$-independent number of iterations in 1-d and roughly linear growth in $k$ in 2-d.
This behaviour is summarised in Theorem \ref{thm:main_result2}, although this theorem also predicts a $k$-independent number of iterations in 2-d (at least when $k$ is sufficiently large), which is not borne out in the range of $k$ in the numerical experiments in \S\ref{sec:fast_num}.

\item The MS formulation with $A=1/3$ behaves similarly to the standard formulation in terms of accuracy: empirically we find it to be $hk^{6/5}$-accurate
We again have a symmetric positive-definite preconditioner that (empirically) gives growth ranging from $k^{0.25}$ to $k^{0.6}$ in 1-d, and from $k^{1.3}$ to $k^{1.6}$ in 2-d.
This behaviour is summarised in Theorem \ref{thm:main_result1}, although this theorem predicts $k\log k$ growth of the number of iterations for $d\geq 1$ (when $k$ is sufficiently large).
\end{enumerate}

Neither situation is ideal: with $A=k^2$, $k$-independent GMRES iterations are achieved with the preconditioner, but at the price of decreasing $h$ compared to the standard formulation (leading to a larger matrix). With $A=1/3$, $h$ can be chosen the same as for the standard formulation, but the preconditioner does not achieve the goal of having $k$-independent number of iterations.

\subsection{Discussion in the context of other work on preconditioning the Helmholtz equation}\label{sec:prec}

For specific geometries and decompositions, there now exist preconditioners for the Helmholtz equation that, at least empirically, (i) give a $k$-independent number of GMRES iterations, and (ii) can be computed in an efficient way. For example, the class of sequential domain-decomposition methods falling under the heading of ``sweeping" exhibit both these properties when applied in rectangular/cuboid geometries with tensor-product grids (see, e.g., the review \cite{GaZh:16} and the references therein), although the low-rank results that underlie these methods do not hold for general geometries and grids \cite{EnZh:18}.

As we saw in \S\ref{sec:GMRES_theorems}, the continuity and coercivity results of the MS formulation naturally give 
\begin{enumerate}
\item a symmetric, positive-definite preconditioner for the formulation (with the preconditioner depending on whether $A=1/3$ or $A=k^2$),
\item a rigorous bound on the number of the GMRES iterations, via the ``Elman estimate" \cite{El:82, EiElSc:83} and its improvement in \cite{BeGoTy:06}.
\end{enumerate}

Regarding 1: although the preconditioner does not give a $k$-independent number of GMRES iterations (except in 1-d with $A=k^2$), the fact that the preconditioner is a symmetric, positive-definite matrix with the same sparsity pattern of the Galerkin matrix allows one to apply solvers such as the conjugate gradient method.

Regarding 2: the only other rigorous bound in the literature on the number of GMRES iterations needed to solve a Helmholtz problem is in \cite{GaGrSp:15}. There, the authors prove that if the Galerkin matrix of the standard formulation of the IIP is preconditioned with the Galerkin matrix of the corresponding problem with absorption added in the form $\Delta +k^2 \mapsto \Delta + k^2 + \ri \eps$, then GMRES converges in a $k$-independent number of iterations when $\eps/k$ is sufficiently small. However, finding cheap approximations of the Galerkin matrix under this level of absorption is difficult; see \cite{GrSpVa:17}. Therefore, the MS formulation is currently the only formulation in the literature that has \emph{both} a symmetric, positive-definite preconditioner \emph{and} a rigorous bound on the number of GMRES iterations when applying the preconditioner (albeit with the number of iterations growing with $k$).

\subsection{Comparison with the results of \texorpdfstring{\cite{GaMo:17a, GaMo:17b}}{Ganesh and Morgenstern}}\label{rem:GM1}
In \cite{GaMo:17a}, Ganesh and Morgenstern discretise the MS formulation of Definition \ref{def:MS} above using $C^1$ finite-dimensional subspaces built from splines. 

The philosophy in \cite{GaMo:17a} is slightly different to ours: here we determined for which $a$ the Galerkin method is $hk^{a}$-accurate and then investigated solving the linear system for increasing $k$, with $h$ and $p$ chosen to ensure $k$-independent accuracy (so that the number of degrees of freedom increases with $k$).
In contrast, 
the majority of numerical experiments in \cite{GaMo:17a} are for fixed $h$ and $p$ (i.e~a fixed number of degrees of freedom) and increasing $k$, in which case the accuracy of the Galerkin solutions then decreases with $k$ (the exception are the experiments in \cite[\S4.2]{GaMo:17a} which demonstrate convergence as $h$ decreases for fixed $p$ and $k$, and as $p$ increases for fixed $h$ and $k$). We therefore cannot directly compare any of the results in \cite{GaMo:17a} to ours, but we now give a brief overview.

The first main goal in \cite{GaMo:17a} is to numerically optimise the parameter $\beta$ (for fixed, $h, p,$, and $k$) to minimise the number of GMRES iterations needed to solve the (unpreconditioned) system \cite[\S3.1]{GaMo:17a}, and then in \cite[\S3.2]{GaMo:17a} a formula is obtained for $\beta$ in terms of $h,p,$ and $k$ that provides a good approximation to the optimal $\beta$. \cite{GaMo:17a} then consider preconditioning the linear system by adding absorption and using additive Schwarz domain decomposition with Dirichlet boundary conditions on the subdomains.

The paper \cite{GaMo:17b} obtains the analogue of the formulation in Definition \ref{def:MS} for the interior impedance problem for the operator $\Delta +k^2 n$, and then proves that this formulation is coercive if the refractive index $n$ satisfies a condition that guarantees nontrapping of rays (see \cite[\S6]{GrPeSp:18}). Numerical experiments demonstrating the convergence of the Galerkin solutions as $h$ decreases for fixed $p$ and $k$, and as $p$ increases for fixed $h$ and $k$, and with a particular emphasis on non-smooth solutions and solutions in non-starshaped domains, are given in \cite[\S\S5.1, 5.2, 7.1, 7.2]{GaMo:17b}. The convergence of preconditioned GMRES is then investigated as in \cite{GaMo:17a}: for $h$ and $p$ fixed and increasing $k$, and using the same preconditioner.

\subsection{Concluding remarks}

\noi The lack of coercivity of the standard variational formulation of the Helmholtz equation is often cited as one of the reasons the Helmholtz equation is difficult to solve numerically; for example, the following is the first line of \cite{CoGa:18}
\begin{quotation}
\noindent``Solving discretized Helmholtz problems by iterative methods is challenging, mainly because of the lack of coercivity of the continuous operator and the highly oscillatory nature of the solutions."
\end{quotation}
and Point 3 on the first page of the present paper expresses the same ``lack of coercivity" sentiment in slightly more detail.

The results of the present paper show that the situation is more subtle (and not captured just by ``lack of coercivity"); for example, both the standard formulation and the MS formulation with $A=1/3$ suffer from the pollution effect in the same way (as shown in the investigation of Q1 in \S\ref{sec:test_acc}), but the MS formulation does not provide an immediate fix to the problems facing iterative methods (as show in the investigation of Q2 in \S\ref{sec:test_fast}).

\paragraph{Acknowledgements.}
We thank Melina Freitag (University of Bath) and Jennifer Pestana (University of Strathclyde) for useful discussions and for sharing their GMRES codes with us. We thank Simon Chandler-Wilde (University of Reading), Mahadevan Ganesh (Colorado School of Mines), and Alastair Spence (University of Bath) for useful discussions.
We also thank the referees for constructive comments.
G.D.~and A.M.~were supported by EPSRC grant EP/N019407/1, and E.A.S.~was supported by EPSRC grant  EP/R005591/1.

\section*{References}

\end{document}